\documentclass[12pt,a4paper]{amsart}%
\usepackage{imakeidx}
\usepackage{amsmath}
\usepackage{fullpage}
\usepackage{comment}
\usepackage{MyCommA}
\usepackage[final]{pdfpages}
\usepackage{tikz-cd}
\makeindex

\newcommand{\Rami}[1]{{{#1}}}

\newcommand{\RamiA}[1]{{{#1}}}
\newcommand{\RamiB}[1]{{{#1}}}
\newcommand{\RamiC}[1]{{{#1}}}
\newcommand{\RamiD}[1]{{{#1}}}
\newcommand{\Dima}[1]{{{#1}}}

\newcommand{\NextVer}[1]{}
\newcommand{\sub}{\subset}
\newcommand{\noleft}{\left.\kern-\nulldelimiterspace}

\renewcommand{\val}{\mathrm{val}}

\makeatletter
\renewcommand{\p@enumii}{}               
\makeatother

\begin{document}
	
	\author[Aizenbud]{Avraham Aizenbud}
	\address{Avraham Aizenbud,
		Faculty of Mathematical Sciences,
		Weizmann Institute of Science,
		76100
		Rehovot, Israel}
	\email{aizenr@gmail.com}
	\urladdr{https://www.wisdom.weizmann.ac.il/~aizenr/}
	
	\author[Gourevitch]{Dmitry Gourevitch}
	\address{Dmitry Gourevitch,
		Faculty of Mathematical Sciences,
		Weizmann Institute of Science,
		76100
		Rehovot, Israel}
	\email{dimagur@weizmann.ac.il}
	\urladdr{https://www.wisdom.weizmann.ac.il/~dimagur/}

\author[Kazhdan]{David Kazhdan}
	\address{David Kazhdan,
    Einstein Institute of Mathematics, Edmond J. Safra Campus, Givaat Ram The
Hebrew University of Jerusalem, Jerusalem, 91904, Israel}
	\email{david.kazhdan@mail.huji.ac.il}
	\urladdr{https://math.huji.ac.il/~kazhdan/}
	
\author[Sayag]{Eitan Sayag}
	\address{Eitan Sayag,
    Department of Mathematics, Ben Gurion University of the Negev, P.O.B. 653,
Be’er Sheva 84105, ISRAEL}
	\email{eitan.sayag@gmail.com}
	\urladdr{www.math.bgu.ac.il/~sayage}

	\date{\today}
	
	\keywords{implicit function theorem, smooth measures, algebraic varieties in positive characteristic}
	\subjclass{54E35, 14G20, 26B10, 28C15}
	%
	%
	%
	%
	%
	%
	%
	%


    

\title{effective local differential topology of algebraic varieties over local fields of positive characteristics}
\maketitle
\begin{abstract} 
In this paper we provide a framework for quantitative statements on distances and measures when studying algebraic varieties and morphisms of algebraic varieties over local fields. 

We will concentrate on local fields of the type $\F_\ell((t))$ and work uniformly with respect to finite extensions of $\F_\ell$.

In this framework we prove analogues of standard results from local differential topology, including the implicit function theorem and study the behavior of smooth measures under push forward {with respect to} submersions.
\end{abstract}

\tableofcontents 
\section{Introduction}
The goal of this paper is to provide {a} framework for formulating quantitative statements on distances and measures when studying algebraic varieties and maps between them over local fields.

We will concentrate on local fields of the type $\F_\ell((t))$ and work uniformly with respect to finite extensions of $\F_\ell$.

We introduce a notion of a rectification of an algebraic variety. This notion allows us to define the concept of a ball on a variety and to fix  a family of measures on it.

\subsection{The framework}
For a variety $\bfX$ defined over a finite field $\F_\ell$
we introduce the notion
of rectification (see \Cref{def:rect}). This notion allows us to define balls in the set $X:=\bfX(F)$ of  $F$-points of the variety, where $F$ is a local field containing $\F_\ell$  (see \Cref{def: balls and measures}).  
{Note that the notion of a ball is defined simultaneously for all local fields of the type  $\F_{\ell^k}((t))$. This allows us to formulate uniform statements for all such fields. 
\RamiB{
\begin{notation}
    For a variety $\bfX$ and an integer $k\in \N$, we will consider two kinds of balls in 
    $\bfX(\F_{\ell^k}((t)))$.
    \begin{enumerate}
        \item Non-centered  balls, denoted by $B_m^{\bfX,k}$, see \Cref{def: balls and measures}\eqref{def: balls and measures:1}\eqref{def: balls and measures:a} for the formal definition. These could be thought of as balls around the origin (though the origin is not necessarily a point in $\bfX$). Here $m\in \Z$ is the valuative radius of the ball, i.e. the actual radius is $\ell^{km}$. Usually, $m$ will be positive when considering such a ball.
    \item Centered balls, denoted by $B_m^{\bfX,k}(x)$, see \Cref{def: balls and measures}\eqref{def: balls and measures:1}\eqref{def: balls and measures:c} for the formal definition. These are balls of valuative radius $m\in \Z$ around $x\in \bfX(\F_{\ell^k}((t)))$. Here the integer $m$ is usually negative. 
    \end{enumerate}
\end{notation}
}
}

Although the balls themselves will depend on the rectification, all the statements that we will prove will not. This is due to the fact that for any two rectifications, one can compare between the corresponding balls. See \Cref{lem:indep-rect}\eqref{lem:indep-rect:1}.

Similarly, we will define the notion of a \RamiB{$\mu$-}rectification of smooth algebraic varieties (see \Cref{def:rect}). This notion allows us to fix measures on balls in $X$ (see \Cref{def: balls and measures}). Again, although the measures themselves will depend on the \RamiB{$\mu$-}rectification, the results that we will prove will not. This is established in \Cref{lem:indep-rect}\eqref{lem:indep-rect:2} and \Cref{lem:sm.mes.def}.
\RamiB{
\begin{remark}
    For the sake of simplicity, we work only with algebraic varieties defined over $\F_\ell$. This is enough for our purposes. However, we believe that with minor modifications, all the statements would be valid also for varieties defined over $\F_\ell[[t]]$ and, with slightly more modifications, also for varieties defined over $\F_\ell((t))$.    
\end{remark}
}
\subsection{Main results}
We prove the following results:
\begin{enumerate}
    \item Effective uniform continuity and boundedness of algebraic morphisms on balls. See \S\ref{sssec:eff.un.cont}.
    \item Effective version of the implicit function theorem. See  \S \ref{sssec:eff.imp}.
    \item The compliment of a small neighborhood around a closed subvariety $\bfZ\subset \bfX$ is controlled by large balls in the complement of $\bfZ$. See \S \ref{sssec:devis}.
    \item Effective surjectivity of Nisnevich covers. See \S  \ref{sssec:eff.sur}.
        \item Effective smoothness of push forward of smooth measures with respect to smooth maps. See \S\ref{sssec:eff.sm}.
    \item Effective bounds on pushforward of smooth measures with respect to smooth maps. See \S\ref{sssec:eff.push.sm.bnd}. 
\end{enumerate}

\subsubsection{Effective uniform continuity and boundedness} \label{sssec:eff.un.cont}

Let $\gamma:\bfX\to \bfY$ be a  map of  algebraic varieties defined over a finite field $\F_\ell$.  
\Rami{This gives maps
$\gamma_k:\bfX(\F_{\ell^k}((t)))\to \bfY(\F_{\ell^k}((t)))$.}
\Rami{Note that each map $\gamma_k$ is uniformly continuous and bounded on any ball in  $\bfX(\F_{\ell^k}((t)))$.} We prove that the modulus of continuity and the bound on $\gamma_k$ in a ball \RamiB{$B_m^{\bfX,k}$ of a fixed (valuative) radius $m$} are bounded  when we vary $k$. 

More formally, we prove:
\begin{introprop}[\Cref{lem:cont.bnd}]\label[introprop]{prop: Eff. uniformly continuous and bounded }
Let $\gamma:\bfX\to \bfY$ be a map of rectified algebraic varieties defined over  a finite field $\F_\ell$.  Then for any $m\in \N$ there is $m'>m$ such that for any $k\in \N$ we have: 
\begin{enumerate}[(i)]
    \item \RamiB{$\gamma(B_m^{\bfX,k})\subset B_{m'}^{\bfX,k}$.}
    \item For any \RamiB{$x\in B_m^{\bfX,k}$, we have $\gamma(B_{-m'}^{\bfX,k}(x))\subset B_{-m}^{\bfX,k}(\gamma(x))$}.
\end{enumerate}
\end{introprop}

\subsubsection{Effective versions of the \RamiB{inverse and the} implicit function theorems}\label{sssec:eff.imp}
We prove an effective versions  of the \RamiB{inverse and the}  implicit function theorems. Informally it means the following:

Let $\gamma:\bfX\to \bfY$ be an \RamiB{etale (respectively smooth)} map of smooth algebraic varieties defined over  a finite field $\F_\ell$.  
{We again consider the maps
$\gamma_k:\bfX(\F_{\ell^k}((t)))\to \bfY(\F_{\ell^k}((t)))$. Then\RamiB{, in a ball $B_{m}^{\bfX,k}$ the map $\gamma_k$ admits a local \RamiB{inverse (respectively section)}, with bounded  modulus of continuity, when $m\in \N$ is fixed and $k$  varies.}}

More formally, we prove \RamiB{the following theorems.
\begin{introtheorem}[\Cref{lem:et.mono}]\label{thm:main.et.mono}    
Let $\gamma:\bfX\to \bfY$ be an \RamiB{\'etale} map of smooth rectified algebraic varieties defined over  a finite field $\F_\ell$.  Then for any $m$ there is $m'$ such that $\gamma|_{B_{m}^{\bfX,k}}$ is a monomorphism on balls of  valuative radius $-m'$.
\end{introtheorem}
}
\begin{introtheorem}[\Cref{lem:eff.imp}]\label{thm:main.imp}
Let $\gamma:\bfX\to \bfY$ be a smooth map of smooth rectified algebraic varieties defined over  a finite field $\F_\ell$.  Then for any $m$ there is $m'$ such that for any $k$ and any \RamiB{$x\in {B_m^{\bfX,k}}$ we have $$\gamma(B_{-m}^{\bfX,k}(x))\supset B_{-m'}^{\bfX,k}(\gamma(x)).$$}  
\end{introtheorem}

\subsubsection{Control on {the} compliment of a small neighborhood around a closed subvariety}\label{sssec:devis}
\begin{introprop}[\Cref{lem:dev}]\label[introprop]{prop: nbd.of.subvar}
 Let $\bfX$ be a rectified variety.
    Let $\bfU\subset \bfX$ be open and $\bfZ:=\bfX\smallsetminus \bfU$. Then for any $m$ there exists $m'$ such that for any $k$,  \RamiB{the ball $B_{m}^{\bfX,k}$} is covered by the union of \RamiB{the ball  $B_{m'}^{\bfU,k}$} and
    the neighborhood of $\bfZ(\F_{\ell^k}((t)))$ \RamiB{of valuative radius $-m$}.
\end{introprop}  
     Note that the notion of ball in $\bfU$ is not the one induced from $\bfX$, but rather an intrinsic notion for $\bfU$. In particular, $\bfZ$ is considered to be infinitely far with respect to this notion.

\subsubsection{Effectively surjective maps} \label{sssec:eff.sur}

We introduce the notion of effective surjectivity for a map $\gamma:\bfX\to\bfY$. Informally it means that the map is not only surjective on the level of points but we also can control the norm of a pre-image in terms of the norm of the target in a way that is uniform on extension of the local field. More precisely: 
\begin{definition}[\Cref{def:effsur}]
   Let $\gamma:\bfX\to\bfY$ be a map  between rectified algebraic varieties defined over $\F_\ell$. We say that   $\gamma$ is effectively surjective if for any $m$ there exist $m'$ such that for any $k$ \RamiB{we have $$\gamma(B_{m'}^{\bfX,k})
   \supset B_{m}^{\bfY,k}.$$}
\end{definition}
We prove the following criterion for this surjectivity:
\begin{introtheorem}[\Cref{prop:crit-effsurj}]\label{thm: crit. eff. surj.}
        Let $\gamma:\bfX\to \bfY$ be a smooth map of {rectified} algebraic varieties {defined over a finite field $\F_\ell$} that is  onto on the level of points for any field extension of $\F_\ell$. Then $\gamma$ is effectively surjective.
\end{introtheorem}
\Rami{
\begin{remark}
    In fact our argument also shows that if $\gamma$ is onto on the level of points for any infinite field extension of $\F_\ell$, then there is a finite extension of scalars of $\gamma$ which is effectively surjective.
\end{remark}
}

\subsubsection{Effective smoothness of push forward of smooth measures with respect to smooth map}\label{sssec:eff.sm}
Let $\gamma:\bfX\to \bfY$ be a smooth map of smooth algebraic varieties defined over  a finite field $\F_\ell$.  

Let $\mu$ and $\nu$ be compactly supported measures on $X=\bfX(\F_\ell((t)))$ and $Y=\bfY(\F_\ell((t)))$  which are coming from \RamiB{$\mu$-}rectifications  on $\bfX$ and $\bfY$.
Assume that the support of $\nu$ includes the support of $\gamma_*(\mu)$.
We show that the density of the pushforward $\gamma_*(\mu)$  with respect to $\nu$ is given by a smooth function which is constant on balls of some \RamiB{valuative} radius $-m$. Moreover $m$ remains bounded when we replace $\F_\ell$ with its finite extensions.

More formally, we make:
\begin{notation}[\Cref{def: balls and measures}]
For any integers $m,k$ and a \RamiB{$\mu$-rectified} smooth algebraic variety defined over $\F_\ell$ we use the rectification in order to define a measure  $\mu_m^{\bfX,k}$ on 
\Rami{$B_{\bfX,k}^m$}.
\end{notation}
We prove:
\begin{introtheorem}[\Cref{lem:push.mes.sm}]\label{thm: eff.smooth.of.push.of.measures }
    Let $\gamma:\bfX \to \bfY$ be a  smooth map  of  \RamiB{$\mu$-rectified}     
    varieties.      
    Then for any $m\in \N$ there is  $m' \in \N$ such that for any $k\in \N$ and any  function $g\in C^\infty_\RamiD{c}(\bfX(\F_{\ell^k}((t))))$ which is constant on balls of \RamiB{valuative radius $-m$,}  
    there is a function $f\in  C^\infty_\RamiD{c}(\bfY(\F_{\ell^k}((t))))$ 
     which is constant on balls of \RamiB{valuative radius $-m'$,}  
    such that:
    $$\gamma_*(g\mu_m^{\bfX,k})=f\cdot \mu_{m'}^{\bfY,k}.$$
\end{introtheorem}

\subsubsection{Effective bounds on push forward of smooth measures}\label{sssec:eff.push.sm.bnd}

We prove:
\begin{introtheorem}[\Cref{lem.sub.bnd}, \Cref{lem:bnd.on.sup}, \Cref{cor:push.bnd.above}]\label{thm: Eff. bounds on push of smooth measures}
    Let $\gamma:\bfX\to\bfY$ be a submersion of \RamiB{$\mu$-rectified} smooth varieties defined over $\F_\ell$. Then for any $m$ there are $m''>m'>m$ such that for any $k\in \N$  we have $$\ell^{-km'}\mu_{m}^{\bfY,k}|_{\supp(\gamma_*(\mu_{m'}^{\bfX,k}))} < \gamma_*(\mu_{m'}^{\bfX,k})<\ell^{km''}\mu_{m''}^{\bfY,k}$$
    Moreover, if $\gamma$ is effectively surjective then 
    $$\ell^{-km'}\mu_{m}^{\bfY,k} < \gamma_*(\mu_{m'}^{\bfX,k})$$

\end{introtheorem}

\subsection{Related results}
Our notion of balls is parallel to the notion of norms in \cite[\S18]{Kot}. However, while \cite[\S18]{Kot} was concerned with large balls, we are also interested in small balls, and in measures. On the other hand, \cite[\S18]{Kot} needed more quantitative results than we need.  It is likely that our theory can be put in a more general context.

\Dima{The results of \cite{Kot} are, by themselves, not uniform on the field. Since \cite{Kot} allows non-local fields like $\bar \F_p((t))$, sometimes it is possible to deduce from it results that are uniform on field extensions. However, this is not the case for results that include the existential quantifier, like the implicit function theorem.}

Other related notions of metric in the context of Archimedean algebraic geometry was introduced in \cite{WE, MAG} under the name of metric algebraic geometry.

\subsection{Motivation}
The main  motivation for this work is a sequel work \cite{AGKS1} where we bound the dimensions of the jet schemes of the nilpotent cone (in $\fg\fl_n$)  in positive (small) characteristic. 

The characteristic zero counterpart of this result is done by \cite[Appendix by Eisenbud and Frenkel]{Mu}.
The methods of \cite{Mu} are not available in positive characteristic. Instead we use an analytic argument, resembling \cite{HC_VD}  in order to bound the number of points in these jet schemes. {These arguments involve volumes and integration.}

We use the Lang-Weil bounds to deduce the required bound on the dimension. For this we need the bound on the number of points to be uniform in field extensions of the underlying finite field. 
{Classical analytic arguments do not give such uniform bounds. Therefore they are not enough for us and }we need the results of the present paper.

In \cite{AGKS1}, we use the bound on the dimension of the jet-schemes in order to bound the push-forward of smooth measures under the Chevalley map $p:\fg\fl_n\to \fc_n$, that sends every matrix to its characteristic polynomial. We do it under the assumption of existence of a certain resolution of singularities.

Note that the $0$-characteristic counterpart  of this result is done \cite{HC_VD}. While we can use the method of \cite{HC_VD} in a neighborhood of the nilpotent cone,
{the method fails to provide the desired global result due to issues of positive characteristic.} This is why we take the detour through the jet-schemes. 
\subsection{Ideas of the proofs}{A possible way to get uniform results for the fields $\F_{\ell^k}((t))$ is to work over the field $\bar \F_{\ell}((t))$. However, this field is not locally compact and we can not obtain boundedness in  the standard way. Thus, we have to use different methods, as we will now describe.}

The proof of \Cref{prop: Eff. uniformly continuous and bounded } is straightforward but technical.

\subsubsection{Idea of the proofs of  \Cref{thm:main.et.mono} and \Cref{thm:main.imp}}
\RamiB{The classical argument for the implicit function theorem}
(see e.g. \cite{Krantz})
is rather effective for the case of affine spaces, \RamiB{but not in general. So we have to employ additional considerations.}
Let us briefly explain the main steps in our proof:
\begin{enumerate}[Step 1.]
\item \RamiB{We prove \Cref{thm:main.et.mono} by {reducing} to the case of standard \'etale map.}
    \item We prove \RamiB{\Cref{thm:main.imp}} for the case of a map between (principal open subsets of) affine spaces.
    \item We deduce the case when the source $\bfX$ is a general variety and the target $\bfY$ is an affine space. This we do by noticing that locally $\bfX$ can be written as a fiber of smooth {map} between affine spaces $\bfV\to \bfW$ and then applying the statement for the map $\bfV\to \bfW\times \bfY$.
    \item The general case: we use the fact that locally $\bfY$ can be mapped by an \'etale map to an affine space $\bfL$. Then we use the result for the composition $\bfX\to\bfY\to\bfL$. In order to deduce the required statement, we also  \RamiB{use \Cref{thm:main.et.mono}.}    
\end{enumerate}
\subsubsection{Idea of the proof of \Cref{prop: nbd.of.subvar}}
We use a stratification argument in order to  reduce the statement to the case when $\bfZ$ is smooth. This case we deduce from \Cref{thm:main.imp} using the fact that $\bfZ$ is locally a fiber of a smooth map.
\subsubsection{Idea of the proof of \Cref{thm: crit. eff. surj.}}
\begin{enumerate}[Step 1.]
    \item We show (using Noetherian induction) that if a map is surjective on the level of points over every field, then the target admits a stratification such that the map admits a section for each strata. See \Cref{lem:onto-start} below.
    \item We note that if a map admits a section then it is effectively surjective.
    \item We show, using the effective version of the implicit function theorem (\Cref{thm:main.imp}), that if a smooth map is effectively surjective over a closed subvariety, then  it is effectively surjective over a controllable neighborhood of this subvariety.
    \item \RamiB{We use the previous step and Proposition  \ref{prop: nbd.of.subvar} to show that
    any smooth map that is effectively surjective over a closed subvariety and over its complement is effectively surjective.} 
    \item We deduce the theorem.
\end{enumerate}
\subsubsection{Idea of the proof of \Cref{thm: eff.smooth.of.push.of.measures }}
Here we use a standard technique of \RamiB{presentation} of a smooth map \RamiB{as a composition of simpler maps,} and the effective version of the implicit function theorem, \Cref{thm:main.imp}.
\subsubsection{Idea of the proof of \Cref{thm: Eff. bounds on push of smooth measures}}
The main part is the first inequality. 
Here the main difficulty is that we can not deduce the statement for a composition of morphisms from the statement for each morphism. However, we still can reduce to the case when the morphism is a composition of an \'etale map and a projection from a product with affine space. For such compositions we can explicitly compute the density function of the push forward. Then we can get the required bound using \Cref{thm:main.imp}.
\subsection{Structure of the paper}
In \S\ref{sec:prel} we fix the conventions that we use in this paper.

In \S\ref{ssec:ballsMe} we define the notion of rectified algebraic varieties, the notion of a ball in such  varieties and  define the measures $\mu_m^{\bfX,k}$ described above. We also show that these objects essentially do not depend on the rectification.
We also prove basic properties of these objects, including \Cref{prop: Eff. uniformly continuous and bounded }, and the second (easier) inequality of \Cref{thm: Eff. bounds on push of smooth measures}.

In \S\ref{ssec:imp} we prove effective version of the implicit function theorem (\Cref{thm:main.imp}) and draw some corollary of it (\Cref{prop: nbd.of.subvar} and the main part of \Cref{thm: Eff. bounds on push of smooth measures}).

In \S\ref{ssec:smoothfunme} we prove \Cref{thm: eff.smooth.of.push.of.measures }.

In \S\ref{sec:sur} we prove \Cref{thm: crit. eff. surj.} and complete the proof of \Cref{thm: Eff. bounds on push of smooth measures}.
\subsection{Acknowledgments}
We would like to thank Gal Binyamini and Yosef Yomdin for helpful discussions.  

During the preparation of this paper, A.A., D.G. and E.S. were partially supported by the ISF grant no. 1781/23. D.K. was partially supported by \RamiC{the ERC grant no. 101142781}.

\section{Conventions}\label{sec:prel}
\begin{enumerate}
    \item Throughout we 
    \RamiB{fix a prime power $\ell$.}
        \item By a variety we mean a reduced scheme of finite type over a field.
    Unless stated otherwise this field will be $\F_\ell$.\index{variety}
        \item \RamiB{Morphisms between varieties will be always defined over the field of definition of the varieties.}
        \item When we consider a fiber product of varieties, \Rami{and fibers of maps between varieties,} we always consider it in the category of schemes.

    \item We will describe \Rami{subschemes} and morphisms of varieties \Rami{and schemes}  using set-theoretical language, when no ambiguity is possible.    
    \item We will usually denote algebraic varieties by bold face letters (such as $\bfX$).    
    \item We will use the symbol $\square$\index{$\square$} in a middle of a square diagram in order to indicate that the square is Cartesian. 
    \item \Rami{For a field extension $E/F$ and a variety $\bfX$ defined over $F$, we will denote by $\bfX_E$ the extension of scalars. We will use analogous notation for all algebro-geometric objects on the variety $\bfX$, including regular functions and differential forms.}
    \item  \Rami{For a regular function $f$ on an algebraic variety $\bfX$ defined over a local field $F$,  we denote by $|f|$ 
    the corresponding function on $X:=\bfX(F)$.}\index{$\vert f \vert$}
    \item  \Rami{For a top form $\omega$ on a smooth algebraic variety $\bfX$ defined over a local field $F$,  we denote the corresponding measure on $X:=\bfX(F)$ by $|\omega|$.}\index{$\vert\omega\vert$}
\end{enumerate}


\section{Balls and measures on rectified varieties}\label{ssec:ballsMe}
In this section we introduce the concept of rectified varieties and define balls on them. We also fix a measure on each ball.

We prove that the balls and measures are  essentially independent of the rectification, see \Cref{lem:indep-rect}.

We also prove some basic properties of these objects, including \Cref{prop: Eff. uniformly continuous and bounded } and the second (easier) inequality of \Cref{thm: Eff. bounds on push of smooth measures}.

\begin{defn}\label[defn]{def:rect}
    Let $\bfX$ be a smooth algebraic variety over $\F_\ell$.  
    
    \begin{enumerate}
        \item 
    A rectification\index{rectification, rectified variety}   of $\bfX$ is a finite open cover $\bfX \subset \bigcup_{\alpha \in I} \bfU_{\alpha}$ 
    with 
    closed embeddings $i_\alpha:\bfU_{\alpha}\to \bA^M$ \RamiD{for each $\alpha \in I$.}    
    \item We will call a rectification simple if $|I|=1$.
    \item     By a rectified variety we will mean a smooth algebraic variety over $\F_\ell$ equipped with a rectification. By a map or a morphism of such we just mean a morphism of the underlying algebraic varieties.
    \item     A \RamiB{$\mu$-}rectification of  $\bfX$  is a rectification of $\bfX$  together with invertible top differential forms $\omega_{\alpha}\in \Omega^{top}(\bfU_{\alpha})$ for each $\alpha \in I$.
    \item We define similarly the notion of a \RamiB{$\mu$-rectified} variety, and 
     simple \RamiB{$\mu$-rectification}.
    \end{enumerate}
\end{defn}

\begin{definition}
    By an almost affine space we mean a principal open subset\footnote{\RamiB{i.e.  the complement of a divisor}} in an affine space defined over $\F_\ell$. Note that  any almost affine space is equipped with a \Rami{natural} simple \RamiB{($\mu$-)}rectification \RamiA{that we will call the standard \RamiB{($\mu$-)}rectification on this space}.
\end{definition}

We are now ready to define balls and measures on our rectified varieties.

\begin{definition}\label[definition]{def: balls and measures}
$ $
\begin{enumerate}
    \item \label{def: balls and measures:1}
    Let $(\bfX,\bfU_\alpha,i_\alpha)$ be a rectified variety. Then, for any $k\in \N$ and $m\in \Z$ define:    
            \begin{enumerate}
            \item\label{def: balls and measures:a}
$B_m^{\bfX,k}:=\bigcup_{\alpha} i_\alpha^{-1} \left(t^{-m}\F_{\ell^k}[[t]]^{M}\right)\RamiB{\subset \bfX(\F_{\ell^k}((t)))}.$\index{$B_m^{\bfX,k}$,$B_\infty^{\bfX,k}$}
            \item
$B_\infty^{\bfX,k}:=\bigcup_{m\in \N} B_m^{\bfX,k}=\bfX(\F_{\ell^k}((t)))$.
        \item\label{def: balls and measures:c} For $x\in \bfX(\F_{\ell^{k}}((t)))$ define {a ball around $x$:}
        $$B_m^{\bfX,k}(x):= \bigcup_{\alpha \text{ s.t. } x\in U_\alpha(F_{\ell^{k}}((t)))} i_\alpha^{-1} \left(i_\alpha(x)+t^{-m}\F_{\ell^k}[[t]]^{M}\right).$$ 
        \item    For $\bfZ\subset \bfX$  we define:  $$B_m^{\bfX,k}(\bfZ):=\bigcup_{z\in \RamiD{B^{\bfZ,k}_\infty}}   B_m^{\bfX,k}(z).$$ 
    \end{enumerate}
        
        \item 
    Let $(\bfX,\bfU_\alpha,i_\alpha,\omega_\alpha)$ be a \RamiB{$\mu$-rectified} variety. Then, for any \RamiD{integers $k\in\N, m\in \Z$}  we define
        a measure $\mu_m^{\bfX,k}$ on $\bfX(\F_{\ell^k}((t)))$ supported on $B_m^{\bfX,k}$ by: $$\mu_m^{\bfX,k}:=\sum_\alpha |(\omega_\alpha)_{\F_{\ell^k}((t))}| \cdot 1_{i_\alpha^{-1} \left(t^{-m}\F_{\ell^k}[[t]]^{M}\right)}.$$ \index{$\mu_m^{\bfX,k}$}
    \item If $\bfX$ is an affine space, we denote by $\mu^{\bfX,k}$ the Haar measure on $\bfX(\F_{\ell^k}((t)))$  normalized such that $\mu^{\bfX,k}(\bfX(\F_{\ell^k}[[t]])=1$.
\end{enumerate}
\end{definition}
The following is obvious:
\begin{lem}\label{lem:tri}
    Let $\bfX$ be a rectified variety. Then for any 2 positive integers $m_1,m_2\in \N$ we have:
    \begin{enumerate}
        \item\label{lem:tri:mon} If $x\in B^{\bfX,k}_{m_2}$ then $B_{-m_1}^{\bfX,k}(x)\sub B^{\bfX,k}_{m_2}$.
        \item\label{lem:tri:sym} If $x\in B^{\bfX,k}_{\infty}$ and $y\in B^{\bfX,k}_{-m_1}(x)$ then $x\in B^{\bfX,k}_{-m_1}(y)$.
        \item\label{lem:tri:ultra} If the rectification of $\bfX$ is simple and  $m_1\geq m_2$, then
        $$B_{-m_1}^{\bfX,k}(x)\sub B_{-m_2}^{\bfX,k}(x)=B^{\bfX,k}_{-m_2}(y)$$ for any $x\in \bfX(\F_{\ell^k}((t)))$ and $y\in  B_{-m_2}^{\bfX,k}(x)$.
    \end{enumerate}
\end{lem}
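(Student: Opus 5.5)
The plan is to unwind \Cref{def: balls and measures} and reduce each item to the ultrametric inequality for the valuation on $\F_{\ell^k}((t))$, applied coordinatewise in $\bA^M$. Write $O:=\F_{\ell^k}[[t]]$ and $N:=i_\alpha$ for a chart. The one fact we use repeatedly is that $t^{-m}O^M$ is an additive subgroup of $\F_{\ell^k}((t))^M$. These subgroups are nested, $t^{-m_1}O^M\subset t^{-m_2}O^M$ whenever $m_1\le m_2$, and in particular $t^{m_1}O^M\subset O^M\subset t^{-m_2}O^M$ for positive $m_1,m_2$.

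\emph{Item \eqref{lem:tri:mon}.} Suppose $x\in B^{\bfX,k}_{m_2}$ and $y\in B^{\bfX,k}_{-m_1}(x)$. Then there is $\alpha$ with $x\in U_\alpha$ and $i_\alpha(y)\in i_\alpha(x)+t^{m_1}O^M$. To conclude that $i_\alpha(y)\in t^{-m_2}O^M$ we would need $i_\alpha(x)\in t^{-m_2}O^M$ for this same $\alpha$. However, $x\in B^{\bfX,k}_{m_2}$ only provides some chart $\beta$ with $i_\beta(x)\in t^{-m_2}O^M$, and $\beta$ need not equal $\alpha$. This mismatch between charts is the only genuine obstacle. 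With a simple rectification it disappears, since there is just one chart: $i(y)=i(x)+(i(y)-i(x))$ is a sum of an element of $t^{-m_2}O^M$ and an element of $t^{m_1}O^M\subset t^{-m_2}O^M$, hence lies in $t^{-m_2}O^M$. For a general rectification, the lemma as stated (``obvious'') presumably intends either this chart-by-chart reading or a convention in which the chart of $x$ is fixed. I would state the argument per chart: whenever $x\in i_\alpha^{-1}(t^{-m_2}O^M)$, every point of $i_\alpha^{-1}(i_\alpha(x)+t^{m_1}O^M)$ stays in $B^{\bfX,k}_{m_2}$. I would flag that the cross-chart case would need \Cref{lem:indep-rect}-type comparisons, which come later.

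\emph{Item \eqref{lem:tri:sym}.} If $y\in i_\alpha^{-1}(i_\alpha(x)+t^{m_1}O^M)$ with $x\in U_\alpha$, then $y\in U_\alpha$ as well. Since $t^{m_1}O^M$ is closed under negation, $i_\alpha(x)\in i_\alpha(y)+t^{m_1}O^M$. The same $\alpha$ is therefore admissible in the union defining $B^{\bfX,k}_{-m_1}(y)$, and so $x\in B^{\bfX,k}_{-m_1}(y)$.

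\emph{Item \eqref{lem:tri:ultra}.} With a single chart $i$ the balls are the cosets $i^{-1}(i(x)+t^{m}O^M)$. The inclusion $B_{-m_1}(x)\subset B_{-m_2}(x)$ follows from $t^{m_1}O^M\subset t^{m_2}O^M$ for $m_1\ge m_2$. For the equality, note that $y\in B_{-m_2}(x)$ means $i(y)-i(x)\in t^{m_2}O^M$. The cosets of the subgroup $t^{m_2}O^M$ then coincide, $i(x)+t^{m_2}O^M=i(y)+t^{m_2}O^M$, and taking preimages gives $B_{-m_2}(x)=B_{-m_2}(y)$.
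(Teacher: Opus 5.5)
Your arguments for (2) and (3) are correct. So is your argument for (1) when the rectification is simple. They are exactly the unwinding of \Cref{def: balls and measures} that the paper intends; the paper gives no proof and calls the lemma obvious.

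You also found the real issue in (1). The ball $B^{\bfX,k}_{-m_1}(x)$ is a union over \emph{all} charts containing $x$, but $x\in B^{\bfX,k}_{m_2}$ bounds $x$ in only one of them. This is not a gap that \Cref{lem:indep-rect}-type comparisons could close later. For non-simple rectifications item (1) is false. It stays false even if $m_2$ on the right-hand side is replaced by any larger constant depending only on $m_1,m_2$.

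\textbf{Counterexample.} Let $\bfX=\bA^2\smallsetminus\{0\}$ with coordinates $(u,v)$. Rectify it by $\bfU_1=\{u\neq 0\}$ with $i_1(u,v)=(u,v,u^{-1})$, and $\bfU_2=\{v\neq 0\}$ with $i_2(u,v)=(uv,v,v^{-1})$. Both are closed embeddings into $\bA^3$; for $i_2$ use $u=(uv)\cdot v^{-1}$.

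Fix $m_1,m_2\geq 1$ and $N\geq m_1$, and put $x=(t^{m_2},t^N)$ and $y=(0,t^N)$.
\begin{enumerate}
\item Since $i_1(x)=(t^{m_2},t^N,t^{-m_2})$, we have $x\in B^{\bfX,k}_{m_2}$.
\item Since $x\in\bfU_2$ and $i_2(y)-i_2(x)=(-t^{m_2+N},0,0)\in t^{m_1}O^3$ (your $O=\F_{\ell^k}[[t]]$), we have $y\in B^{\bfX,k}_{-m_1}(x)$.
\item But $y\notin\bfU_1$ and $i_2(y)=(0,t^N,t^{-N})$, so $y\in B^{\bfX,k}_{m}$ only when $m\geq N$.
\end{enumerate}
Taking $N=m_1+m_2$ contradicts (1). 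Letting $N\to\infty$ rules out any inclusion $B^{\bfX,k}_{-m_1}(x)\subset B^{\bfX,k}_{m'}$ holding uniformly in $x\in B^{\bfX,k}_{m_2}$. The mechanism is that $x$ lies far out in chart $2$. There a $t^{m_1}$-perturbation of the coordinate $uv$ is a $t^{m_1-N}$-perturbation of $u$.

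\textbf{What is true.} Item (1) has to be restricted, and once restricted your proof is complete. One option is to restrict to simple rectifications, which is the setting of its first use, in Case 1 of \Cref{lem:eff.imp}. The other is your per-chart form: only charts $\alpha$ with $i_\alpha(x)\in t^{-m_2}O^M$ are allowed to contribute to $B^{\bfX,k}_{-m_1}(x)$.

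The same example also breaks part (ii) of \Cref{lem:cont.bnd}, for the projection $(u,v)\mapsto u$ and any candidate constant $m'$. Take $x=(t^{m},t^{m'})$ and $y=(t^{m}+1,t^{m'})$. Then $x\in B^{\bfX,k}_m$ and $y\in B^{\bfX,k}_{-m'}(x)$ via chart $2$, yet the images differ by $1$. So the problem lies in how centered balls are defined for non-simple rectifications, not in your argument.
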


\begin{prop}\label[prop]{lem:cont.bnd}
Let $\gamma:\bfX\to \bfY$  be map of rectified algebraic varieties.  Then 
for any $m\in \N$ there is $m'>m$ such that for any $k$ and any $x\in B^{\bfX,k}_m$ we have 
\begin{enumerate}[(i)]
    \item \label{lem:cont.bnd:S1}$\gamma(B^{\bfX,k}_{m})\subset B^{\bfY,k}_{m'}$.
    \item \label{lem:cont.bnd:S2}$\gamma(B^{\bfX,k}_{-m'}(x))\subset B^{\bfY,k}_{-m}(\gamma(x))$.
\end{enumerate}
\end{prop}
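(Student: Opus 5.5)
Both parts reduce to explicit estimates on polynomials with coefficients in $\F_\ell$. The key point is that such coefficients have valuation $0$ (or the coefficient is $0$), uniformly in $k$. Let $\{\bfU_\alpha,i_\alpha\}$ be the rectification of $\bfX$ and $\{\bfV_\beta,j_\beta\}$ that of $\bfY$, with embeddings into $\bA^M$ and $\bA^N$.

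First I would refine the source cover. The sets $\bfU_\alpha\cap\gamma^{-1}(\bfV_\beta)$ cover $\bfU_\alpha$. Since $i_\alpha$ is a closed embedding, $\bfU_\alpha$ is affine. Hence each such set is a finite union of principal opens $D(h_{\alpha\beta s})\subset\bfU_\alpha$, where $h_{\alpha\beta s}$ is the restriction of a polynomial on $\bA^M$ with coefficients in $\F_\ell$. On $D(h)$ (dropping indices) the composite $j_\beta\circ\gamma$ has the form $P/h^{r}$, with $P$ an $N$-tuple of polynomials over $\F_\ell$ in the $M$ coordinates. Everything is finite: finitely many $\alpha,\beta,s$, polynomials of bounded degree $d$, and a fixed exponent $r$.

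\textbf{Main obstacle: bounding $h$ away from $0$ uniformly in $k$.} On the compact set $i_\alpha^{-1}(t^{-m}\F_{\ell^k}[[t]]^M)$ the functions $h_s$ have no common zero, since the $D(h_s)$ cover $\bfU_\alpha$. I need $\max_s|h_s(x)|\ge \ell^{-kc}$ with $c$ independent of $k$. Plan:
\begin{enumerate}
\item The $h_s$ generate the unit ideal in $\F_\ell[\bfU_\alpha]$. So there are polynomials $g_s$ over $\F_\ell$ and a polynomial $e$ in the ideal of $i_\alpha(\bfU_\alpha)$ with $\sum g_s h_s=1+e$.
\item Evaluate at a point $x$ of the ball, where $e(x)=0$. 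This gives $1=\sum g_s(x)h_s(x)$.
\item Each $g_s(x)$ has valuation at least $-m\deg g_s$, because the coefficients of $g_s$ lie in $\F_\ell$. Hence $\max_s\val(h_s(x))\le m\max_s\deg g_s$, independently of $k$.
\end{enumerate}
The same Nullstellensatz trick (a single certificate over $\F_\ell$) is what makes everything uniform. Possibly one should also replace the absolute values by $\ell^{-k\val}$; all bounds should be phrased in terms of $\val$.

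\textbf{Part (i).} For $x$ in the ball, choose $s$ as in the last step, so that $\val(h_s(x))\le C_1 m$. Then $\val(P(x))\ge -dm$, and therefore $\val(j_\beta\gamma(x))\ge -dm-rC_1m$. Setting $m'\ge dm+rC_1m$, this gives $\gamma(x)\in B^{\bfY,k}_{m'}$.

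\textbf{Part (ii).} Let $y\in B^{\bfX,k}_{-m'}(x)$, so $i_{\alpha'}(y)-i_{\alpha'}(x)\in t^{m'}\F_{\ell^k}[[t]]^M$ for some $\alpha'$ with $x\in U_{\alpha'}$. Since $x$ lies in the ball of radius $m$ for possibly a different chart, I would first pass to the chart $\alpha'$. For this I would apply part (i), with some care, to the transition maps between charts. Concretely, I would repeat the certificate argument for the open cover $\{\bfU_{\alpha'}\}$ to get: $x\in U_{\alpha'}$ and $x\in B^{\bfX,k}_m$ imply $i_{\alpha'}(x)\in t^{-m_1}\F_{\ell^k}[[t]]^M$ for some fixed $m_1$. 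I expect this chart-change bookkeeping to be the fiddliest step.

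Then work in the chart $\alpha'$ with the $s$ selected for $x$. The following expansions use Taylor expansion with coefficients over $\F_\ell$:
\begin{itemize}
\item $P(y)-P(x)$ and $h(y)-h(x)$ have valuation at least $m'-(d-1)m_1$.
\item If $m'$ is larger than $C_1m_1+(d-1)m_1$, then $\val(h(y))=\val(h(x))$, so $y\in D(h)$.
\item In that case $P(y)/h(y)^r-P(x)/h(x)^r$ has valuation at least $m'-C_2m_1$ for a constant $C_2$.
\end{itemize}
Choosing $m'\ge m+C_2m_1$ gives $\gamma(y)\in B^{\bfY,k}_{-m}(\gamma(x))$ in chart $\beta$, which is a chart containing $\gamma(x)$. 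Taking the maximum over the finitely many indices yields a single $m'$ working for both (i) and (ii).
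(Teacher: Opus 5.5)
The gap is the chart change in part (ii). You claim that $x\in U_{\alpha'}$ and $x\in B^{\bfX,k}_m$ force $i_{\alpha'}(x)\in t^{-m_1}\F_{\ell^k}[[t]]^M$ for a fixed $m_1$, and you plan to get this from a Nullstellensatz certificate. A certificate $\sum_s g_sh_s=1$ only shows that \emph{some} $h_s(x)$ is not too small. In other words, $x$ sits well inside \emph{some} chart, which you already knew: the chart $\alpha$ witnessing $x\in B^{\bfX,k}_m$. It says nothing about the particular chart $\alpha'$ in which $y$ is close to $x$. The centered ball $B^{\bfX,k}_{-m'}(x)$ is a union over \emph{all} charts containing $x$, including charts in which $x$ is arbitrarily far out. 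Your claim is false, and the failure is not just bookkeeping: (ii) itself fails when the rectification of $\bfX$ is not simple.

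Here is a counterexample. Take $\bfX=\A^2$ with coordinates $(p,q)$, rectified by two charts:
\begin{itemize}
\item $\bfU_1=\A^2$ with the identity embedding;
\item $\bfU_2=\{p\neq 0\}$ embedded by $(p,q)\mapsto(p,p^{-1},pq)$, a closed embedding since $q=(pq)\cdot p^{-1}$.
\end{itemize}
Let $\bfY=\A^2$ with its standard rectification, and $\gamma=\mathrm{id}$.
\begin{itemize}
\item For $n\geq 0$ the point $x=(t^n,0)$ lies in $B^{\bfX,k}_0\subset B^{\bfX,k}_m$, yet $i_2(x)=(t^n,t^{-n},0)$ is unbounded in $n$. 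This refutes your claim.
\item Given any $m'$, let $y=(t^n,t^{m'-n})$. Then $i_2(y)-i_2(x)=(0,0,t^{m'})$, so $y\in B^{\bfX,k}_{-m'}(x)$.
\item However $\gamma(y)-\gamma(x)=(0,t^{m'-n})$ has valuation $m'-n<m$ once $n>m'-m$, so $\gamma(y)\notin B^{\bfY,k}_{-m}(\gamma(x))$. Hence no $m'$ works.
\end{itemize}

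The paper's proof has the same hole. Its Case 5 (arbitrary source rectification, simple target) is said to follow from the simple--simple Case 4, and that step is exactly your chart change; the example above falls in Case 5.

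What your argument does establish is part (i), and part (ii) in either of two settings: when the rectification of $\bfX$ is simple, or with $B^{\bfX,k}_{-m'}(x)$ replaced by the union over only those charts $\alpha$ with $x\in U_\alpha$ and $i_\alpha(x)\in t^{-m}\F_{\ell^k}[[t]]^M$. For that corrected statement your direct route is sound: refine to principal opens, write $\gamma=P/h^r$, use one certificate over $\F_\ell$, and Taylor-expand with $\F_\ell$-coefficients. It uses the same key device as the paper's Case 6, without the chain of reductions.
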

\begin{proof}
$ $
\begin{enumerate}[{Case} 1.]
    \item $\bfX=\A^M, \bfY=\A^1$ (both equipped with the standard rectifications) and $\gamma$ is a monomial:\\
    In this case it is easy to see that one can take $m'=dm$ where $d$ is the degree of $\gamma$.
    \item $\bfX=\A^M, \bfY=\A^1$ (both equipped with the standard rectifications):\\
    follows from the previous case and the fact that $\F_{\ell^k}((t))$ is non Archimedean.
    \item $\bfX, \bfY$ are affine spaces (both equipped with the standard rectifications):\\
    follows from the previous case.
    \item\label{lem:cont.bnd:4} The rectifications of $\bfX, \bfY$ are simple:\\
    by definition of a  map between affine varieties we have a commutative diagram 
    $$
    \begin{tikzcd}
     \bfX\arrow["",r]\arrow["\gamma",d]& \A^{M_1} \arrow["\gamma'",d] \arrow["",d]\\
    \bfY\arrow["",r]   & \A^{M_2}    
\end{tikzcd}
$$
where 
the rectifications of $\bfX$ and $\bfY$  are induced from their embedding into affine spaces in the diagram.
Now, this case follows from the previous one.
    \item\label{lem:cont.bnd:5} The rectification of $\bfY$ is simple:\\
    Follows from the previous case.
    \item The rectification of $\bfX$ is simple, $\gamma$ is an isomorphism, and the rectification of $\bfY$ comes from a cover with principal open sets, and their standard embedding into $\bfX\times \A^1$:\\
    Let $\bfX=\bfY=\bigcup_{i=1}^{M} \bfY_{f_i}=\bigcup \bfX_{f_i}$ be the cover defining the rectification of $\bfY$. By 
    Hilbert's Nullstellensatz
    we can find $g_i\in O_\bfX(\bfX)$ such that $\sum f_ig_i=1$. 
    Consider the map $\bfX\to \A^{2M}$ given by $x\mapsto (f_1(x),\dots,f_M(x),g_1(x),\dots,g_M(x))$ and apply to it Case \ref{lem:cont.bnd:4}. We obtain  numbers $m_1>m_0>m$ such that for any $x\in B_m^{\bfX,k}$ we have 
    \begin{enumerate}
        \item $|g_i(x)|<|t^{-m_0}|$\label{lem:cont.bnd:5:a}
        \item $f_i(B_{-m_0}^{\bfX,k}(x)) \subset B_{-m-2m_0}^{\A^1,k}(f_{i}(x))$\label{lem:cont.bnd:5:b}
    \end{enumerate}
    Take $m'=m_1$.
    From \eqref{lem:cont.bnd:5:a} we obtain that for any $x\in B_m^{\bfX,k}$ there exists $i$ such that   
    \begin{equation}\label{fi>t}
     |f_i(x)|> |t^{m_0}|.   
    \end{equation}
    Thus $$x\in B_{m_0}^{\bfY_{f_i},k}\subset B_{m_0}^{\bfY,k}\subset B_{m'}^{\bfY,k}$$ proving \eqref{lem:cont.bnd:S1}.

    From \eqref{fi>t} and \eqref{lem:cont.bnd:5:b} we obtain that for any $x\in B_m^{\bfX,k}$ there exists $i$ such that for any $y\in B_{-m'}^{\bfX,k}(x)$ we have 
    $$\left|\frac{1}{f_i(x)}-\frac{1}{f_i(y)}\right|=\left|\frac{f_i(y)-f_i(x)}{f_i(x)f_i(y)}\right|<|f_i(y)-f_i(x)|\cdot  |t^{-2m_0}|\leq |t^{m}|.$$
    Thus
    $$y \in B_{-m}^{\bfY_{f_i},k}(x)\subset B_{-m}^{\bfY,k}(x),$$
    proving \eqref{lem:cont.bnd:S2}.
        \item\label{lem:cont.bnd:7} The rectification of $\bfX$ is obtained from a union of rectifications of each $\gamma^{-1}(\bfU_i)$ where $\bfY=\bigcup \bfU_i$ given by the rectification of $\bfY$:\\
        Follows immediately from Case \ref{lem:cont.bnd:4}.        
        \item  \label{lem:cont.bnd:8} The rectification of $\bfX$ is simple, $\gamma$ is an isomorphism.\\
        Let $\bfY_1$ be identical to  $\bfY$ as a variety but with a rectification defined by a cover by principal open sets such that the identity map $\bfY_1\to \bfY$ satisfies the condition of the previous case (Case \ref{lem:cont.bnd:7}). The statement follows now from the previous 2 cases.
    \item \label{lem:cont.bnd:9}$\gamma$ is an isomorphism and the rectification of  $\bfY$ is obtained from a union of rectifications of each $\gamma(\bfU_i)$ where $\bfX=\bigcup \bfU_i$ given by the rectification of $\bfX$:\\
    Follows from the previous case (Case \ref{lem:cont.bnd:8}).
    \item \label{lem:cont.bnd:10}$\gamma$ is an isomorphism.\\
    Follows from Cases \ref{lem:cont.bnd:7} and \ref{lem:cont.bnd:9}.
    \item general case:\\
    Follows from Cases \ref{lem:cont.bnd:7} and \ref{lem:cont.bnd:10}.
\end{enumerate}    
\end{proof}

\begin{cor}\label[cor]{lem:indep-rect}    
    Let $\bfX_1, \bfX_2$ be two copies of the same $\F_\ell$-variety with two (possibly different) rectifications. Let $\bfZ\subset \bfX_1$  be a closed subvariety. Then 
    
\begin{enumerate}[(i)]
\item\label{lem:indep-rect:1}    
    for any $m\in \N$ there is  $m' \in \N$ such that for any $k\in \N$ we have:
    \begin{enumerate}
        \item\label{lem:indep-rect:1a} $B_m^{\bfX_1,k}\subset B_{m'}^{\bfX_2,k}$
        \item for any $x\in \bfX_1(\F_{\ell^k}((t)))$ we have $B_m^{\bfX_1,k}(x)\subset B_{m'}^{\bfX_2,k}(x)$
        \item\label{lem:indep-rect:1c} $B_m^{\bfX_1,k}(\bfZ)\subset B_{m'}^{\bfX_2,k}(\bfZ)$.
    \end{enumerate}    
            \item \label{lem:indep-rect:2} 
            For any \RamiB{$\mu$-rectification}s of $\bfX_i$  and  $m\in \N$, there exists $m'$ such that for any $k$ we have:  $$\mu_m^{\bfX_1,k}< \ell^{km'}\mu_{m'}^{\bfX_2,k}.$$
    \end{enumerate}    
\end{cor}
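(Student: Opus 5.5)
The plan is to derive everything from \Cref{lem:cont.bnd}, applied to the identity morphism $\mathrm{id}:\bfX_1\to\bfX_2$ and to auxiliary regular functions on the chart overlaps. Both rectifications are finite, so only finitely many charts and transition functions occur, and every constant produced below is independent of $k$.

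For \eqref{lem:indep-rect:1a}, apply \Cref{lem:cont.bnd}\eqref{lem:cont.bnd:S1} to $\gamma=\mathrm{id}:\bfX_1\to\bfX_2$. This gives $m'$ with $B_m^{\bfX_1,k}=\mathrm{id}(B_m^{\bfX_1,k})\subset B_{m'}^{\bfX_2,k}$ for all $k$, which is the claim.

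For (b), fix a chart $i_\alpha$ of $\bfX_1$ containing $x$ and a chart $j_\beta$ of $\bfX_2$. On the overlap, $j_\beta$ is given by the coordinates of a morphism $\bfU_\alpha\cap\bfV_\beta\to\A^{M}$. I would first treat the case of simple rectifications, where $j=P\circ i$ with $P$ a polynomial map. Expanding $P(i(x)+h)-P(i(x))$ in powers of $h\in t^{-m}\F_{\ell^k}[[t]]^M$ expresses $j(y)-j(x)$ through $h$ and through coefficients that are polynomials in $i(x)$. The general case would then be reduced to the simple one by the same chain of reductions as in the proof of \Cref{lem:cont.bnd}: refining covers by principal opens, using the partition of unity $\sum f_ig_i=1$ from Case 6 there, and passing to unions. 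I expect this step to be the main obstacle. The containment must hold for every $x$, not only for $x$ in a fixed bounded ball, so the coefficients depending on $i(x)$ are not a priori bounded. A point $y\in B_m^{\bfX_1,k}(x)$ also need not lie in the particular $\bfX_2$-chart containing $x$.

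To overcome this, I would first try to run the estimate on each shell $B_{n}^{\bfX_1,k}\smallsetminus B_{n-1}^{\bfX_1,k}$ separately. On each shell \Cref{lem:cont.bnd} gives explicit constants, and I would try to show that, for fixed $m$, the required radius does not grow with $n$. This should use the ultrametric property (\Cref{lem:tri}\eqref{lem:tri:ultra}) and the invertibility of the transition functions on overlaps. If this linear-in-$n$ bookkeeping works, it yields a single $m'$.

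Part \eqref{lem:indep-rect:1c} should then follow formally from (b) by taking the union over $z\in B_\infty^{\bfZ,k}$. The set $B_\infty^{\bfZ,k}=\bfZ(\F_{\ell^k}((t)))$ does not depend on the rectification, so the union $\bigcup_z B_m^{\bfX_1,k}(z)\subset\bigcup_z B_{m'}^{\bfX_2,k}(z)$ is exactly the required inclusion.

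For \eqref{lem:indep-rect:2}, write $\omega_\alpha=f_{\alpha\beta}\,\eta_\beta$ on $\bfU_\alpha\cap\bfV_\beta$, where $f_{\alpha\beta}$ is a regular function on the overlap. Give the overlaps the rectifications induced by the graph embeddings $(i_\alpha,j_\beta)$, together with the inverse of the localizing functions. By \Cref{lem:cont.bnd}\eqref{lem:cont.bnd:S1}, applied to the maps into $\bfX_1$ and $\bfX_2$ and to $f_{\alpha\beta}$, and combined with \eqref{lem:indep-rect:1a}, there is $m'$ with the following two properties:
\begin{enumerate}
\item every point of $i_\alpha^{-1}(t^{-m}\F_{\ell^k}[[t]]^M)$ lies in some $j_\beta^{-1}(t^{-m'}\F_{\ell^k}[[t]]^{M})$;
\item at such a point $|f_{\alpha\beta}|\le \ell^{km'}$.
\end{enumerate}
Summing over the finitely many $\alpha$ then gives $\mu_m^{\bfX_1,k}\le |I|\,\ell^{km'}\mu_{m'}^{\bfX_2,k}$. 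Finally, enlarge $m'$ so that the constant $|I|$ and the strict inequality are absorbed into the factor $\ell^{km'}$.
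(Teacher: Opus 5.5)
Your part (i)(a) is exactly the paper's argument. Your proof of (ii) via the transition functions $f_{\alpha\beta}$ on the overlaps is a sound and more direct alternative to the paper's chain of reductions. The covering property you need there is (i)(a) for $\bfU_\alpha$, comparing its simple rectification $i_\alpha$ with the rectification of $\bfU_\alpha$ by the sets $\bfU_\alpha\cap\bfV_\beta$ with graph embeddings $(i_\alpha,j_\beta)$; then apply \Cref{lem:cont.bnd}\eqref{lem:cont.bnd:S1} to $f_{\alpha\beta}$.

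The gap is in (i)(b), and hence (i)(c). Your shell argument aims at the inclusion uniformly in all $x\in\bfX_1(\F_{\ell^k}((t)))$. It hinges on the required radius not growing with the shell index $n$, and that is false, so no bookkeeping can rescue it. Let $\bfX=\A^2$, with $\bfX_1$ standard and $\bfX_2$ rectified by the closed embedding $(u,v)\mapsto(u,v,uv)$. For $x=(t^{-N},0)$ and $y=(t^{-N},t^{-m})\in B_m^{\bfX_1,k}(x)$, the third coordinate changes by $t^{-N-m}$. So $y\in B_{m'}^{\bfX_2,k}(x)$ forces $m'\geq N+m$: the radius grows linearly in the shell, for either sign of $m$. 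With $\bfZ=\{v=0\}$, the same points violate (i)(c).

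Your other worry, about leaving the chart, is also a real obstruction for large balls. Let $\bfX_2$ be $\A^1$ covered by $\A^1\smallsetminus\{0\}$ and $\A^1\smallsetminus\{1\}$, embedded by $s\mapsto(s,s^{-1})$ and $s\mapsto(s,(s-1)^{-1})$, and let $\bfX_1$ be $\A^1$ with the standard rectification. Then $1\in B_1^{\bfX_1,k}(0)$, but $1\notin B_{m'}^{\bfX_2,k}(0)$ for every $m'$, even though $0$ is bounded.

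What is true, and what the paper's one-line appeal to \Cref{lem:cont.bnd} actually yields, is the localized statement: for $x\in B_m^{\bfX_1,k}$ one has $B_{-m'}^{\bfX_1,k}(x)\subset B_{-m}^{\bfX_2,k}(x)$. This is literally \Cref{lem:cont.bnd}\eqref{lem:cont.bnd:S2} applied to $\mathrm{id}:\bfX_1\to\bfX_2$, so you need neither the Taylor expansion nor the shells. The chart-switching issue is handled inside that proposition: in Case 6, $\sum f_ig_i=1$ keeps some $|f_i|$ bounded below at bounded points.

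Accordingly, (i)(c) should be read as $B_m^{\bfX_1,k}\cap B_{-m'}^{\bfX_1,k}(\bfZ)\subset B_{-m}^{\bfX_2,k}(\bfZ)$. It does not follow by taking a union over all $z\in B_\infty^{\bfZ,k}$, since those centres are unbounded. Instead, centre at the bounded point. Suppose $y\in B_m^{\bfX_1,k}\cap B_{-m'}^{\bfX_1,k}(z)$ with $z\in B_\infty^{\bfZ,k}$. Then \Cref{lem:tri}\eqref{lem:tri:sym} gives $z\in B_{-m'}^{\bfX_1,k}(y)$. The localized (b) at $y$ gives $z\in B_{-m}^{\bfX_2,k}(y)$. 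A second application of \Cref{lem:tri}\eqref{lem:tri:sym} gives $y\in B_{-m}^{\bfX_2,k}(z)$.
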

\begin{proof}
Items \eqref{lem:indep-rect:1}  follow immediately from the previous proposition (\Cref{lem:cont.bnd}). It is enough to prove \eqref{lem:indep-rect:2}. We will proceed by analyzing cases:
\begin{enumerate}[{Case} 1.]
    \item The \RamiB{$\mu$-rectification}s of $\bfX_i$ are simple, and their embedding into an affine space is the same:\\
    Let $\omega_i$ be the form on $\bfX_i$. Let $g=\frac{\omega_1}{\omega_2}\in O^\times(\bfX_1)$.
    Fix $m$. By \Cref{lem:cont.bnd}\eqref{lem:cont.bnd:S1} there is $m'>m$ such that  for any $k\in \N$ we have $$\max(\val(g(B_m^{\bfX_1,k}))) <m'.$$
    Now we have:
    $$\mu_m^{\bfX_1,k}=\mu_m^{\bfX_2,k} |g|< \ell^{km'}\mu_{m}^{\bfX_2,k}\leq \ell^{km'}\mu_{m'}^{\bfX_2,k}.$$
    \item The \RamiB{$\mu$-rectification}s of $\bfX_i$ are simple and the forms on $\bfX_i$ are the same:\\
    follows immediately from \Cref{lem:cont.bnd}\eqref{lem:cont.bnd:S1}.
    \item The \RamiB{$\mu$-rectification}s of $\bfX_i$ are simple:\\
    Let $\bfX_3$ be a  variety identical to $\bfX_1,\bfX_2$ with a rectification given by the embedding of $\bfX_1$ to an affine space and the form on $\bfX_2$. The assertion follows now from the previous cases (applied to the pairs $(\bfX_1,\bfX_3)$ and $(\bfX_3,\bfX_2)$  in correspondence).  
    \item The covers of $\bfX_1$ and $\bfX_2$ are the same:\\
    Follows from the previous case.
        \item There is an invertible  top form $\omega$ on $\bfX_1$  such that the  forms in  the rectifications  on  both $\bfX_1$ and $ \bfX_2$ are  restrictions of $\omega$.\\
        Let $M$ be the size of largest of the 2 covers of $\bfX_i$. We have $$|\omega_{\F_{\ell^k}((t))}|1_{B_m^{\bfX_i,k}}\leq \mu_m^{\bfX_i,k}\leq M |\omega_{\F_{\ell^k}((t))}|1_{B_m^{\bfX_i,k}}.$$
        The assertion follows now from part \eqref{lem:indep-rect:1a}.
    \item\label{lem:indep-rect:c6} $\bfX_1$ admits an invertible top-form:\\
    Follows from the previous 2 cases.
    \item The rectification of $\bfX_1$ is obtained by rectifications of each $\bfU_i$ where $\bfX_2=\bigcup \bfU_i$ is the cover of $\bfX_2$ given by its rectification:\\
    Follows from the previous case.
        \item The rectification of $\bfX_2$ is obtained by a rectification of each $\bfU_i$ where $\bfX_1=\bigcup \bfU_i$ is the cover of $\bfX_1$ given by its rectification:\\
    Follows from Case \ref{lem:indep-rect:c6}.
    \item General case:\\
    Follows from the last 2 cases.
\end{enumerate}    
\end{proof}
At some point we will need the following stronger version of \Cref{lem:indep-rect}
\eqref{lem:indep-rect:1a}:
\begin{lem}\label{lem:kot.bnd}
    Let $\bfX_1, \bfX_2$ be two copies of the same $\F_\ell$-variety with two (possibly different) rectifications. 
Then there exists $a\in \N$ such that 
    for any $m,k\in \N$ we have:
$$B_m^{\bfX_1,k}\subset B_{am+a}^{\bfX_2,k}$$
\end{lem}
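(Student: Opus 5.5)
We prove the lemma by making the case analysis in the proof of \Cref{lem:cont.bnd}\eqref{lem:cont.bnd:S1} quantitative. The goal is to show that, in every case, the $m'$ produced there can be taken to be an affine function $am+a$ of $m$. Since the identity map $\bfX_1\to\bfX_2$ is an isomorphism, we follow the reduction chain of Cases \ref{lem:cont.bnd:7}--\ref{lem:cont.bnd:10}. This reduces the lemma to three situations: (1) both rectifications are simple; (2) the rectification of $\bfX_2$ is a refinement by principal open sets of a simple rectification; (3) one rectification is obtained from the other by restricting to the pieces of a cover. Composing finitely many bounds of the form $m\mapsto am+a$ again gives a bound of this form, since $a_2(a_1m+a_1)+a_2\le a_3m+a_3$. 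The same holds for maxima over finitely many charts. So it is enough to obtain an affine bound in each basic situation.

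For simple rectifications with embeddings $\bfX\hookrightarrow\A^{M_1}$ and $\bfX\hookrightarrow\A^{M_2}$, the identity is given by polynomial coordinates $\gamma_j$ in the ambient coordinates of $\A^{M_1}$. If $d$ is the maximal degree and every coefficient lies in $\F_\ell$ (hence has valuation $0$), then the non-Archimedean inequality gives $\val(\gamma_j(x))\ge -dm$ whenever $\val(x_i)\ge -m$ for all $i$. This yields $m'=dm$, which is linear in $m$, as in Cases 1--4 of \Cref{lem:cont.bnd}.

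The essential case is Case 6, the passage from a simple rectification to a cover by principal opens $\bfX_{f_i}$ embedded via $(x,1/f_i(x))$. Here we use the Nullstellensatz identity $\sum f_ig_i=1$. By the polynomial case there is $d$ with $|g_i(x)|\le|t^{-dm}|$ on $B_m^{\bfX,k}$. Hence some $i$ satisfies $|f_i(x)|\ge|t^{dm}|$: otherwise every term $f_i(x)g_i(x)$ would have absolute value less than $1$, and the ultrametric inequality would contradict $\sum f_ig_i(x)=1$. For that $i$ we get $|1/f_i(x)|\le|t^{-dm}|$. The remaining coordinates of $\bfX_{f_i}$ are the original coordinates, which are bounded by $|t^{-m}|$. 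Therefore $x\in B^{\bfX_{f_i},k}_{dm}$, again an affine bound.

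The reverse direction, from principal opens to the simple rectification, is a restriction of coordinates and gives $m'=m$. Passing to sub-covers and unions of rectifications (Cases \ref{lem:cont.bnd:7}, \ref{lem:cont.bnd:9}) only takes maxima over finitely many charts. When the open sets are not principal, or a chart embedding differs, we first cover by principal opens of the affine charts and compare via the simple case, which again costs a polynomial-degree factor. The main point requiring care is Case 6, since the whole argument relies on the Nullstellensatz identity converting a lower bound on $|f_i|$ into an upper bound on $|1/f_i|$ that is linear in $m$. No other step in \Cref{lem:cont.bnd} uses anything worse than polynomial degree bounds, so every constant is independent of $k$ and $m$, and the lemma follows.
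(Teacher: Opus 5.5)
Your proposal is correct and takes the same route as the paper, whose proof simply observes that the case analysis in the proof of Proposition \ref{lem:cont.bnd} already yields a bound affine in $m$ (alternatively citing \cite[Proposition 18.1 (1)]{Kot}). You carry out that quantification explicitly, and you handle the key step correctly: in the principal-open case, the Nullstellensatz identity $\sum f_ig_i=1$ forces $|f_i(x)|\geq |t^{dm}|$ for some $i$, which gives a bound linear in $m$.
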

\begin{proof}
    In fact, the proof of \Cref{lem:cont.bnd} provides a proof of this lemma. It also follows from \cite[Proposition 18.1 (1)]{Kot}.
\end{proof}

\begin{cor}\label[cor]{cor:cls.embd}
    Let $\gamma: \bfX\to \bfY$ be a closed embedding. Then for any integer $m\in \N$   there exists $m'\in \N$ such that for any $k\in \N$ we have  $\gamma^{-1}(B^{\bfY,k}_m)\subset B^{\bfX,k}_{m'}$
\end{cor}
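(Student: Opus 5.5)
The idea is to reduce to independence of rectification, \Cref{lem:indep-rect}\eqref{lem:indep-rect:1a}, by putting on $\bfX$ a rectification induced from $\bfY$. Given the rectification $\bfY=\bigcup_\alpha \bfU_\alpha$ with closed embeddings $i_\alpha:\bfU_\alpha\to\bA^M$, set $\bfV_\alpha:=\gamma^{-1}(\bfU_\alpha)$. Since $\gamma$ is a closed embedding, $\gamma|_{\bfV_\alpha}:\bfV_\alpha\to\bfU_\alpha$ is a closed embedding. Hence $j_\alpha:=i_\alpha\circ\gamma|_{\bfV_\alpha}:\bfV_\alpha\to\bA^M$ is a closed embedding as well. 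The $\bfV_\alpha$ cover $\bfX$, so $(\bfV_\alpha,j_\alpha)$ is a rectification of $\bfX$. Call $\bfX'$ the variety $\bfX$ equipped with it.

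Next, compare the balls of $\bfX'$ with those of $\bfY$. A point $x\in\bfX(\F_{\ell^k}((t)))$ satisfies $\gamma(x)\in i_\alpha^{-1}(t^{-m}\F_{\ell^k}[[t]]^M)$ exactly when $x\in\bfV_\alpha(\F_{\ell^k}((t)))$ and $j_\alpha(x)\in t^{-m}\F_{\ell^k}[[t]]^M$. Taking the union over $\alpha$ gives the exact equality
$$\gamma^{-1}(B^{\bfY,k}_m)=B^{\bfX',k}_m$$
for all $m$ and $k$. The only thing to check here is that $F$-points of $\bfV_\alpha$ are precisely the points of $X$ mapping into $\bfU_\alpha(F)$, which holds because $\bfV_\alpha$ is a preimage of an open set.

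Finally, apply \Cref{lem:indep-rect}\eqref{lem:indep-rect:1a} to the two rectifications $\bfX'$ and $\bfX$ (the given one) of the same variety. This yields $m'$, independent of $k$, with $B_m^{\bfX',k}\subset B_{m'}^{\bfX,k}$. Combining with the equality above proves the claim.

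There is no real obstacle. The one point needing care is that the given rectification of $\bfX$ may be arbitrary; this is exactly why the comparison is routed through \Cref{lem:indep-rect} rather than done directly.
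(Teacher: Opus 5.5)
Your proposal is correct and follows the paper's own proof: equip $\bfX$ with the rectification induced from $\bfY$, note the exact equality $\gamma^{-1}(B^{\bfY,k}_m)=B^{\bfX',k}_m$, and conclude with \Cref{lem:indep-rect}\eqref{lem:indep-rect:1a}. You also spell out the two routine checks the paper leaves implicit: that $i_\alpha\circ\gamma|_{\bfV_\alpha}$ is a closed embedding, and that the $\F_{\ell^k}((t))$-points of $\bfV_\alpha$ are exactly the points mapping into $\bfU_\alpha$.
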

\begin{proof}
    Let $\bfX'$ be the variety \RamiD{$\bfX$} with the induced rectification from $\bfY$. We have 
    $\gamma^{-1}(B^{\bfY,k}_m)= B^{\bfX',k}_{m}$. The assertion follows now from \Cref{lem:indep-rect}\eqref{lem:indep-rect:1a}.
\end{proof}

\begin{lem}\label{lem.sub.bnd}
    Let $\gamma: \bfX\to \bfY$ be  a submersion of \RamiB{$\mu$-rectified} varieties. Then for any $m$ there is $m'$ such that $$\gamma_*(\mu_m^{\bfX,k})< \ell^{km'}\mu_{m'}^{\bfY,k}.$$
\end{lem}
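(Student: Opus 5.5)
The plan is to reduce to a local normal form for submersions: locally, $\gamma$ is an \'etale map into $\bfY\times\A^n$ followed by the projection to $\bfY$. I would then bound the push forward separately under the \'etale part and under the projection.

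First, a reduction using \Cref{lem:indep-rect}\eqref{lem:indep-rect:2}. Replacing the $\mu$-rectification of $\bfX$ costs only a change $m\mapsto m'$ and a factor $\ell^{km'}$, and the same holds for $\bfY$. The inequality we need is stable under both changes. So we may choose convenient $\mu$-rectifications.
\begin{itemize}
\item Cover $\bfY$ by finitely many affine opens $\bfV_j$ carrying invertible top forms $\eta_j$.
\item Cover $\bfX$ by finitely many affine opens $\bfU_\alpha\subset\gamma^{-1}(\bfV_{j(\alpha)})$ on which $\gamma$ factors as
$$\bfU_\alpha\xrightarrow{\phi_\alpha}\bfV_{j(\alpha)}\times\A^{n}\xrightarrow{pr}\bfV_{j(\alpha)},$$
with $\phi_\alpha$ \'etale. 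This is the standard local structure of smooth morphisms.
\item Take the top form on $\bfU_\alpha$ to be $\omega_\alpha=\phi_\alpha^*(\eta_{j(\alpha)}\wedge dx_1\wedge\dots\wedge dx_n)$, which is invertible since $\phi_\alpha$ is \'etale.
\end{itemize}
Since $\mu_m^{\bfX,k}$ is a finite sum over $\alpha$, it suffices to treat a single $\alpha$. We can also take the rectification of $\bfV_j\times\A^n$ to be the product one.

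The \'etale step is to show that $(\phi_\alpha)_*\big(|\omega_\alpha|\cdot 1_{B}\big)\le d\,|\eta\wedge dx|\cdot 1_{\phi_\alpha(B)}$ for a constant $d$ independent of $k$. Here $B$ is the relevant piece of $B_m^{\bfX,k}$. This holds because the density at a point $z$ is $\sum_{x\in\phi_\alpha^{-1}(z)\cap B}1$, by the change-of-variables formula for the local diffeomorphism $\phi_\alpha$ (the Jacobian is $1$ with respect to these forms). So the key input is a uniform bound $d$ on the number of points in the fibers of $\phi_\alpha$ over all fields $\F_{\ell^k}((t))$. I expect this uniform fiber-cardinality bound to be the main obstacle. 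My first approach is the following argument, uniform in the base field: $\phi_\alpha$ is quasi-finite and of finite type, so by Zariski's main theorem, or constructibility of the fiber-degree function, the number of geometric points in any fiber is bounded by some $d$. The $\F_{\ell^k}((t))$-points of a fiber inject into the geometric points, so the same $d$ works for every $k$. By \Cref{lem:cont.bnd}\eqref{lem:cont.bnd:S1}, $\phi_\alpha(B)\subset B_{m_1}^{\bfV_j\times\A^n,k}$ for some $m_1$ independent of $k$.

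For the projection step, the product rectification gives $B_{m_1}^{\bfV_j\times\A^n,k}\subset B_{m_1}^{\bfV_j,k}\times (t^{-m_1}\F_{\ell^k}[[t]])^n$. Hence
$$pr_*\big(|\eta\wedge dx|\,1_{B_{m_1}}\big)\le \mu^{\A^n,k}\big((t^{-m_1}\F_{\ell^k}[[t]])^n\big)\,|\eta|\,1_{B_{m_1}^{\bfV_j,k}}=\ell^{knm_1}|\eta|\,1_{B_{m_1}^{\bfV_j,k}},$$
up to the fixed normalization constant relating $|dx|$ to $\mu^{\A^n,k}$.

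Combining the two steps gives $\gamma_*(\mu_m^{\bfX,k}|_{\bfU_\alpha})\le d\,\ell^{knm_1}\mu_{m_1}^{\bfY,k}$ in these adapted rectifications. Summing over the finitely many $\alpha$ and choosing $m'$ large enough to absorb the constants $d\cdot|I|$ and $n m_1$ into $\ell^{km'}$ (possible since $k\ge1$) yields the lemma. Finally we transport back to the original $\mu$-rectifications via \Cref{lem:indep-rect}.
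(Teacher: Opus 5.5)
Your proposal is correct and follows essentially the paper's own proof. The paper likewise reduces, via \Cref{lem:indep-rect} and the local structure of smooth maps (an \'etale map into $\bfY\times\A^n$ followed by the projection), to two cases: an \'etale case, handled by a uniform bound on fiber cardinalities together with \Cref{lem:cont.bnd}, and a projection case, handled by a direct volume computation. The only difference is cosmetic: you choose the adapted $\mu$-rectifications (pulled-back forms, product rectification) all at once, rather than passing through the paper's chain of cases.
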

\begin{proof}
$ $
    \begin{enumerate}[{Case} 1.]    
        \item The rectifications of $\bfX$ and of $\bfY$ are simple, $\gamma$ is \'etale and the form on $\bfX$ is the pullback of the form on $\bfY$:\\
        Fix $m\in\N$.
        By \cite[\href{https://stacks.math.columbia.edu/tag/03JA}{Tag 03JA}, \href{https://stacks.math.columbia.edu/tag/03J5}{Tag 03J5}]{SP} there is $M\in \N$ such that for any $y\in \bfY(\bar\F_\ell((t)))$ we have $$M> \# \gamma^{-1}(y).$$        
        By \Cref{lem:cont.bnd} there exists $m_1>m$ such that $$\gamma(B_m^{\bfX,k}) \subset B_{m_1}^{\bfY,k}.$$
        Take $m'=m_1M$. Let $k\in \N$.
        Let $\omega_\bfY$ be the form on $\bfY$ given by its \RamiB{$\mu$-rectification}. For $k\in\N$, let $\omega_{\bfY,k}:=(\omega_{\bfY,k})_{\F_\ell((t))}$
        We have $$\gamma_*(\mu_{m}^{\bfX,k})=f\cdot |\omega_{\bfY,k}|$$ where $$f(y)=\#\{x\in B_{m}^{\bfX,k}|\gamma(x)=y\}.$$
        So, 
        \begin{align*}
        \gamma_*(\mu_{m}^{\bfX,k})=f\cdot |\omega_{\bfY,k}|\leq M \cdot 1_{\gamma(B_m^{\bfX,k})} \cdot |\omega_{\bfY,k}|\leq  M \cdot 1_{B_{m_1}^{\bfY,k}} \cdot |\omega_{\bfY,k}|=M \cdot \mu_{m_1}^{\bfY,k}<
        \ell^{m'} \cdot \mu_{km'}^{\bfY,k}
        \end{align*}
        
        \item\label{lem.sub.bnd:2} the rectifications of $\bfX$ and of $\bfY$ are simple and $\gamma$  is \'etale:\\
        Follows from the previous case using \Cref{lem:indep-rect}.
        \item\label{lem.sub.bnd:3}  the \RamiB{$\mu$-rectification}s of $\bfX$ and of $\bfY$ are simple, $\bfX=\bfY\times \A^M$, and $\gamma$  is the projection:\\
        WLOG assume that the \RamiB{$\mu$-rectification} of $\bfX$ is given by the \RamiB{$\mu$-rectification} of $\bfY$ and the standard \RamiB{$\mu$-rectification} of $\bA^M$. Take $m'=mM+1$. The assertion is a straightforward computation.
        \item  $\gamma=\gamma_1\circ \gamma_2$  when $\gamma_1$ satisfy the conditions of Case \ref{lem.sub.bnd:3} and $\gamma_2$ satisfy the conditions of Case \ref{lem.sub.bnd:2}:\\
        Follows immediately from the previous 2 cases.
        \item  The rectification of $\bfY$ is simple:\\
        By \cite[\href{https://stacks.math.columbia.edu/tag/039P}{Tag 039P}]{SP} there is a cover $\bfX=\bigcup_{i=1}^M \bfU_i$ such that $\gamma|_{\bfU_i}$  satisfy the condition of the previous case.  WLOG we can also assume that each $\bfU_i$  admits a simple \RamiB{$\mu$-rectification}. By \Cref{lem:indep-rect}, we can also assume that the \RamiB{$\mu$-rectification} of $\bfX$ is coming from these rectifications.        
        The statement follows now from the previous case.     
        \item  General case:\\
        Follows from the previous case and \Cref{lem:indep-rect}.       
    \end{enumerate}
\end{proof}
\begin{cor}\label[cor]{cor:bnd.int}
    Let $\bfX$ be a \RamiB{$\mu$-rectified} variety. Then for any $m\in \N$ there exists $M\in \N$ such that for any $k\in \N $:
    $$\mu_m^{\bfX,k}(B_\infty^{\bfX,k})< \ell^{kM}.$$
\end{cor}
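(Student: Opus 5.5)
We deduce this directly from \Cref{lem.sub.bnd} applied to the structure map $\gamma:\bfX\to \bfY:=\operatorname{Spec}\F_\ell$, a point. Since $\bfX$ is smooth, $\gamma$ is a submersion. We equip the point with its standard simple $\mu$-rectification: the embedding into $\A^0$ and the form $\omega=1$. For every $k$ and $m\geq 0$ we then have $B_{m}^{\bfY,k}=\bfY(\F_{\ell^k}((t)))$, a single point, and $\mu_{m}^{\bfY,k}$ is the unit Dirac mass $\delta$ at that point.

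\Cref{lem.sub.bnd} then provides $m'$ (which we may take to be positive) such that for all $k$
$$\gamma_*(\mu_m^{\bfX,k})<\ell^{km'}\mu_{m'}^{\bfY,k}=\ell^{km'}\delta .$$
The pushforward of a measure to a point is its total mass times $\delta$. Evaluating both sides at the point therefore gives
$$\mu_m^{\bfX,k}(B_\infty^{\bfX,k})=\mu_m^{\bfX,k}(\bfX(\F_{\ell^k}((t))))<\ell^{km'}.$$
Setting $M=m'$ proves the claim.

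The only real check is that \Cref{lem.sub.bnd} is valid when the target is zero-dimensional, i.e. that a point is a legitimate smooth $\mu$-rectified variety. We confirm this by running the case analysis of that proof with $\bfY$ a point.

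Concretely, \cite[Tag 039P]{SP} gives a cover of $\bfX$ by opens $\bfU_i$, each étale over $\A^{d}$, where $d=\dim\bfX$. Thus each $\gamma|_{\bfU_i}$ factors as an étale map $\bfU_i\to\A^d$ followed by the projection $\A^d\to\mathrm{pt}$. Case 1 of that proof, together with \Cref{lem:indep-rect}, bounds the first step. For the second step, Case 3 applies with $\bfY$ the point, where the statement reduces to
$$\mu^{\A^d,k}\bigl(t^{-m}\F_{\ell^k}[[t]]^d\bigr)=\ell^{kmd}.$$

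If one prefers to avoid this degenerate case altogether, the same chain of reductions can be run directly. First reduce to one chart with form $\omega$ using \Cref{lem:indep-rect}\eqref{lem:indep-rect:2}. Next refine to étale charts $\phi:\bfU\to\A^d$, taking $\omega=\phi^*dx$ up to a unit. Bound that unit by \Cref{lem:cont.bnd}, and bound the fibre cardinality of $\phi$ uniformly by some $N$, as in Case 1 of \Cref{lem.sub.bnd}. Finally, $\phi(B_m)\subset B_{m_1}^{\A^d,k}$, which has volume $\ell^{km_1d}$, so the total mass is at most $N\ell^{k m_1 d}$ times the unit bound. All of these constants are of the form $\ell^{kM}$ with $M$ independent of $k$.
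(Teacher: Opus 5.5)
Your proposal is correct and is the paper's own argument: the paper proves this corollary in one line by applying \Cref{lem.sub.bnd} to the structure map $\bfX\to pt=\bA^0$, reading off the total mass from the Dirac mass at the point, exactly as you do. Your check that the zero-dimensional target is covered by the case analysis of \Cref{lem.sub.bnd}, and your direct alternative via \'etale charts, are sound additions that the paper leaves implicit.
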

\begin{proof}
    It follows from the previous lemma for the map $\gamma:\bfX\to pt=\bA^0$.
\end{proof}
\section{Effective version of the implicit function theorem and its corollaries}\label{ssec:imp}
In this subsection we prove \RamiB{\Cref{thm:main.et.mono} (see \Cref{lem:et.mono} below)  and \Cref{thm:main.imp} (see \Cref{lem:eff.imp} below)}.

We also deduce 2 statements:
\begin{itemize}
    \item \Cref{prop: nbd.of.subvar}. See \Cref{lem:dev}.
    \item The main part of \Cref{thm: Eff. bounds on push of smooth measures}: 
    The push of the measure $\mu_{m}^{\bfX,k}$ under a submersion $\bfX\to \bfY$  controls (from above) the measure $\mu_{m}^{\bfY,k}$ on the support of the former. See \Cref{lem:bnd.on.sup}.
\end{itemize}

We start with the following:
\begin{thm}\label[thm]{lem:et.mono}
Let $\gamma:\bfX\to \bfY$  be  an \'etale map of smooth (rectified) algebraic varieties defined over $\F_\ell$.  Then for any $m$ there is $m'$ such that for any $k$ and any $x\in B^{\bfX,k}_m$ the map $\gamma|_{B^{\bfX,k}_{-m'}(x)}$ is a monomorphism.
\end{thm}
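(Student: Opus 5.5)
The plan is to reduce to a standard étale map, following the strategy announced in the introduction. First I reduce to simple rectifications. Using \Cref{lem:indep-rect} and \Cref{lem:cont.bnd}, the statement does not depend on the rectifications: if it holds for one choice, then comparing balls of radius $m$ and small balls of radius $-m'$ shows it holds for any other, after enlarging $m'$. Next, by the local structure of étale maps (\cite[\href{https://stacks.math.columbia.edu/tag/02GT}{Tag 02GT}]{SP}), I cover $\bfX$ by finitely many affine opens $\bfU_i$, with affine opens $\bfV_i\supset\gamma(\bfU_i)$, such that $\gamma|_{\bfU_i}:\bfU_i\to\bfV_i$ is standard étale. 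Concretely, $\bfU_i$ is a principal open in $\bfV_i\times\A^1$ given by $\{(v,s):P(v,s)=0,\ \partial_sP(v,s)g(v,s)\neq 0\}$, with $P$ monic in $s$, and $\gamma$ is the projection. I rectify $\bfX$ via these $\bfU_i$, and $\bfY$ via covers containing the $\bfV_i$. There is one subtlety: a point $x$ lies in some $\bfU_i$, but the ball $B_{-m'}^{\bfX,k}(x)$ is a union over all charts containing $x$. I handle this by choosing, via \Cref{prop: nbd.of.subvar}-type arguments or directly via \Cref{lem:cont.bnd} applied to the functions cutting out the $\bfU_i$ (as in Case 6 of that proof), some $i$ with $x\in B^{\bfU_i,k}_{m_1}$. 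I then show that the whole ball $B_{-m'}^{\bfX,k}(x)$ is contained in a ball of $\bfU_i$ of controlled radius around $x$.

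The core step is the standard étale case, handled on $B_{m_1}^{\bfU_i,k}$. Suppose $x=(v,s)$ and $x'=(v',s')$ lie in a small ball around $x$ and $\gamma(x)=\gamma(x')$. Then $v=v'$, and $s,s'$ are both roots of $P(v,\cdot)$ with $|s-s'|\le|t^{m'}|$. Boundedness on $B_{m_1}^{\bfU_i,k}$, which comes from the chart including the coordinate $1/(\partial_sP\cdot g)$, gives $|\partial_sP(v,s)|\ge|t^{m_2}|$. It also gives a bound $|t^{-m_3}|$ on all coefficients of $P(v,\cdot)$ and on $s$, with $m_2,m_3$ independent of $k$. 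I Taylor expand:
$$0=P(v,s')-P(v,s)=(s'-s)\Bigl(\partial_sP(v,s)+\sum_{j\ge2}c_j(s'-s)^{j-1}\Bigr),$$
where the $c_j$ are the Hasse derivatives, which are polynomial in $(v,s)$ and hence bounded by $|t^{-m_4}|$. This is valid in characteristic $p$ because we use Hasse derivatives rather than $\frac{1}{j!}$. If $m'>m_2+m_4$, the bracket has absolute value $|\partial_sP(v,s)|\neq0$ by the ultrametric inequality, so $s=s'$. All constants come from \Cref{lem:cont.bnd}, so they are uniform in $k$.

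The main obstacle is the chart-compatibility step: showing that a point near $x$ in the $\bfX$-rectification is near $x$ in the chart $\bfU_i$ where $x$ is "deep inside". Otherwise two points in $B_{-m'}^{\bfX,k}(x)$ might lie in different charts. I plan to resolve it as in Case 6 of \Cref{lem:cont.bnd}. Write the $\bfU_i$ as principal opens $\bfX_{f_i}$ (after refining) with $\sum f_ig_i=1$, and pick $i$ with $|f_i(x)|>|t^{m_0}|$. Then continuity (\Cref{lem:cont.bnd}\eqref{lem:cont.bnd:S2}) keeps $|f_i|$ equal to $|f_i(x)|$ on $B_{-m'}^{\bfX,k}(x)$, so the whole small ball lies in $\bfU_i$ with $1/f_i$ close to $1/f_i(x)$. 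Hence the ball lies in a $\bfU_i$-ball of radius $-m''$ around $x$, and the previous paragraph applies.
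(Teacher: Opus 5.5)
Your local computation is correct. It is the paper's Case 1 in a slightly different guise: the paper runs the Hasse-derivative expansion only for the universal family $\bfX_n\to\bfY_n$, with the chart $(f,a,1/f'(a))$, and transports it to an arbitrary standard \'etale map through the base-change square and \Cref{lem:cont.bnd}. You run the same expansion directly, with coefficients depending on $v$ and bounded via \Cref{lem:cont.bnd}; either way works.

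\textbf{The gap.} The genuine gap is the gluing step you single out as the main obstacle. The paper passes over the same point by saying that, by \Cref{lem:indep-rect}, the statement does not depend on the rectification. Your resolution applies \Cref{lem:cont.bnd}\eqref{lem:cont.bnd:S2} to the ball $B^{\bfX,k}_{-m'}(x)$ of a non-simple rectification, and then concludes that this ball lies in a small $\bfU_i$-ball around $x$. Neither step is justified. Closeness of $1/f_i$ says nothing about the other coordinates of chart $i$ at points that enter the union through a different chart, one in which $x$ lies near the boundary. And \Cref{lem:cont.bnd}\eqref{lem:cont.bnd:S2} is only valid when the source rectification is simple, where it is just an estimate for polynomials.

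\textbf{Counterexample.} In fact both the containment and the theorem itself fail for rectifications of the kind you propose. Let $p=\mathrm{char}\,\F_\ell$, $\bfX=\bfY=\A^2$, and $\gamma(z_1,z_2)=(z_1,z_2^p-z_2)$; this map is \'etale, with Jacobian $-1$.
\begin{itemize}
\item Rectify $\bfX$ by $\bfU_1=\A^2$ with the identity, which is the standard \'etale chart with $s=z_2$.
\item Add $\bfU_2=\{z_1\neq 0\}$ with $i_2(z_1,z_2)=(z_1,z_1^{-1},z_1z_2)$. Up to the extra coordinates $z_2^p-z_2$ and $z_1^{1-p}$, which change nothing below, this is the standard \'etale chart over $\{a\neq0\}$ given by $s=z_1z_2$ and $P(s)=s^p-a^{p-1}s-a^pb$.
\end{itemize}
Given $m'$, take $N\ge m'$, $x=(t^N,0)\in B^{\bfX,k}_1$ and $y=(t^N,1)$. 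Then $i_2(y)-i_2(x)=(0,0,t^N)\in t^{m'}\F_{\ell^k}[[t]]^3$, so $y\in B^{\bfX,k}_{-m'}(x)$. Yet $\gamma(y)=\gamma(x)$, so no $m'$ works for $m=1$.

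The same example with $\gamma=\mathrm{id}$ and $y=(t^N,t^{m'-N})$ violates \Cref{lem:cont.bnd}\eqref{lem:cont.bnd:S2}. So the rectification-independence you invoke in your first step fails as well.

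\textbf{What does go through.} Your argument works for an affine $\bfX$ with a simple rectification $z:\bfX\hookrightarrow\A^N$:
\begin{enumerate}
\item Use $\sum f_ig_i=1$ to pick $i$ with $|f_i(x)|\ge|t^{m_0}|$.
\item Apply \Cref{lem:cont.bnd}, now with simple source where it is valid. This shows that $B^{\bfX,k}_{-m_1}(x)$ lies in $\bfX_{f_i}$ and is small in the chart $(z,1/f_i)$.
\item Pass to the standard \'etale chart of $\bfX_{f_i}$, using \Cref{lem:cont.bnd} for the identity between two simple rectifications.
\item Run your Taylor step at an arbitrary point of the ball, using \Cref{lem:tri}\eqref{lem:tri:ultra}. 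This handles arbitrary pairs of points, not only pairs through the center.
\end{enumerate}
For general $\bfX$ the missing ingredient is not the local computation. The class of admissible rectifications, or the definition of centered balls (e.g.\ using only charts in which $x$ is bounded), must be restricted before any gluing argument of this kind can work.
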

\begin{proof}$ $
By \Cref{lem:indep-rect} the statement does not depend on the rectification, so we  will choose it each time as convenient.
For an integer $n$, denote by $\bfY_n$  the collection of monic polynomials of degree $n$, considered as an affine space. Denote also $\bfX_n:=\{(f,a)|f \in \bfY_n; f(a)=0, f'(a)\neq 0\}$.
\begin{enumerate}[{Case} 1.]
    \item  $\bfX=\bfX_n$, $\bfY=\bfY_n$ (for some $n$), and $\gamma$ is the projection:\\
    \RamiC{E}mbed $\bfX$ into $\A^{n+2}$ by $(f,a)\mapsto (f,a, \frac{1}{f'(a)})$. This gives a rectification of $\bfX$. Take also the rectification of $\bfY$ that comes from the fact that it is an affine space.
    
    Set $m'=mn+1+m+1$.
Let $x=(f,a)\in B^{\bfX,k}_m$. It is enough to show that if $(f,b)\in B^{\bfX,k}_{-m'}(x)$ then $a=b$. 
For a polynomial $f\in \Z[x]$ denote $f^{[k]}:=\frac{f^{(k)}}{k!}$. This is a polynomial over $\Z$, so this operation is defined over any field.
We have $$0=f(b)=f(a+(b-a))=f(a)+f^{[1]}(a)(b-a)+\cdots=f^{[1]}(a)(b-a)+\cdots$$
    Assuming that $b\neq a$ we obtain:
    $$f^{[1]}(a)+(b-a)f^{[2]}(a)+\dots=0$$
Since $x\in B^{\bfX,k}_m$, we have $$|f^{[i]}(a)|\leq |t^{-mn}|.$$ Thus $$|(b-a)f^{[2]}(a)+\dots|< |t|^{m+1}.$$ On the other hand $$f^{[1]}(a)=f'(a) \geq |t|^{m}.$$   This leads to a contradiction.  
    \item $\gamma$ is standard \'etale map\footnote{See \cite[\href{https://stacks.math.columbia.edu/tag/02GI}{Definition 02GI}]{SP}.}:\\
    \RamiC{Follows} from the previous step, \Cref{lem:cont.bnd} and the fact that we have a base change diagram:
    $$ 
\begin{tikzcd}
     \bfX\arrow[dr, phantom, "\square"] \arrow[r] \arrow[d, "\gamma"'] & \bfX_n \arrow[d] \\
     \bfY \arrow[r]  & \bfY_n
\end{tikzcd}
$$ 
    \item \RamiC{General} case:\\
    Follows from the previous case using  \cite[\href{https://stacks.math.columbia.edu/tag/02GT}{Tag 02GT}]{SP}.
\end{enumerate}    
\end{proof}
The next result is an effective version of the open mapping theorem:
\begin{thm}\label[thm]{lem:eff.imp}
Let $\gamma:\bfX\to \bfY$ be a smooth map of smooth \Rami{(rectified)} algebraic varieties.  Then for any $m\in \N$ there is $m'>m$ such that for any $k$ and any $x\in B^{\bfX,k}_m$ we have $$\gamma(B^{\bfX,k}_{-m}(x))\supset B^{\bfY,k}_{-m'}(\gamma(x)).$$
\end{thm}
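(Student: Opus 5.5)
We follow the four steps announced in the introduction. By \Cref{lem:indep-rect} (together with \Cref{lem:cont.bnd}) the statement does not depend on the rectifications, and it is local on $\bfX$ and on $\bfY$. Hence we may always pass to a finite open cover of $\bfX$ (respectively of $\bfY$) by pieces with simple rectifications, at the cost of enlarging $m$.

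\textbf{Step 1: the affine case.} Let $\bfX\subset\A^N$ and $\bfY\subset\A^n$ be almost affine spaces with their standard rectifications, and let $\gamma$ be smooth. Near a given point we split the coordinates as $x=(u,v)$ so that $\partial\gamma/\partial v$ is an invertible $n\times n$ minor. Over the whole of $\bfX$, finitely many such minors $D_j$ have no common zero, so by the Nullstellensatz $\sum D_jh_j=1$. Applying \Cref{lem:cont.bnd} to the $h_j$ shows that on $B^{\bfX,k}_m$ some $|D_j|\ge|t|^{m_0}$, with $m_0$ independent of $k$. The entries of $(\partial\gamma/\partial v)^{-1}$ are then bounded by $|t|^{-m_1}$, and all higher Taylor coefficients of $\gamma$ at points of $B_m$ are bounded by $|t|^{-m_1}$; both bounds come from \Cref{lem:cont.bnd}. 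With these bounds we run Newton/Hensel iteration in the variable $v$ with $u$ frozen. The iteration starts at $x$ and aims at a target $y\in B_{-m'}(\gamma(x))$, with $m'\gg m,m_1$. It converges inside $B_{-m}(x)$, and it also stays inside the principal open set, because $|f(x)|\ge|t|^{m_0}$ for the defining function $f$ and small perturbations preserve this. All constants depend only on $m$ and on degrees, never on $k$.

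\textbf{Step 2: $\bfY$ an almost affine space, $\bfX$ general.} Locally, $\bfX$ is the fiber $\bfX=\phi^{-1}(0)$ of a smooth map $\phi:\bfV\to\bfW$ between almost affine spaces; this uses the local structure of smooth morphisms, e.g.\ \cite[Tag 039P]{SP} or standard-smooth presentations. We extend $\gamma$ to $\bfV$ locally by polynomials, and, shrinking if necessary, we may assume that $(\phi,\gamma):\bfV\to\bfW\times\bfY$ is smooth. We use the rectification of $\bfX$ induced from $\bfV$. Step 1 applied to $(\phi,\gamma)$ at $x$ shows that every $(0,y)$ with $y\in B_{-m'}(\gamma(x))$ has a preimage in $B^{\bfV,k}_{-m}(x)$. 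That preimage lies in $\bfX$ and hence in $B^{\bfX,k}_{-m}(x)$.

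\textbf{Step 3: the general case.} Locally on $\bfY$ there is an \'etale map $\pi:\bfY\to\bfL$ to an almost affine space, and we apply Step 2 to $\pi\circ\gamma$. Let $y\in B^{\bfY}_{-m''}(\gamma(x))$. By \Cref{lem:cont.bnd}, $\pi(y)\in B^{\bfL}_{-m'}(\pi\gamma(x))$, so Step 2 gives $x'\in B_{-m_2}(x)$ with $\pi\gamma(x')=\pi(y)$. We choose $m_2$ large enough that, by \Cref{lem:cont.bnd}, $\gamma(x')$ lies in $B_{-m_3}(\gamma(x))$, where $m_3$ is the monomorphism radius from \Cref{lem:et.mono} applied to $\pi$ on the relevant bounded ball. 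By \Cref{lem:tri}, $y$ lies in that ball as well, since $m''\ge m_3$. Because $\pi$ is injective on $B_{-m_3}(\gamma(x))$, we get $\gamma(x')=y$.

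\textbf{Main obstacle.} The hardest part is Step 1 with $k$-uniform constants. We must ensure that the Newton iteration stays inside both the chosen chart and the principal open subset. We must also ensure that the non-simple ball unions of \Cref{lem:tri} do not break the ultrametric nesting. We handle the latter by working with simple rectifications throughout and controlling the passage between them with \Cref{lem:indep-rect}.
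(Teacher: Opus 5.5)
Your proposal is correct and follows the paper's proof essentially step by step: Newton iteration for almost affine spaces, then realizing $\bfX$ locally as a fiber of a smooth map between almost affine spaces, then passing through an \'etale chart $\pi:\bfY\to\bfL$ using \Cref{lem:cont.bnd} and \Cref{lem:et.mono}. Two differences from the paper are cosmetic: freezing $u$ is the paper's device of adjoining a projection $p$ so that $\gamma\times p$ is \'etale, and your Nullstellensatz bound on a Jacobian minor replaces the paper's inclusion of $j^{-1}$ in the rectification. One small correction: the Taylor coefficients of the rational map $\gamma=h/f^M$ are not uniformly bounded, since the order-$i$ coefficient grows roughly like $|f(x)|^{-|i|}$. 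So obtain the quadratic remainder estimate from these geometric bounds, or, as the paper does, by clearing denominators and bounding the polynomial $f^M\bigl(\gamma(w)-\gamma(x_r)-D_{x_r}\gamma\,(w-x_r)\bigr)$.
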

\begin{proof}
$ $
\begin{enumerate}[{Case} 1.]
    \item $\bfX= (\bA^d)_f$ is an almost affine space,  $\bfY=\bA^d$:\\
    Let $j\in O(\bfX)$ be the Jacobian of $\gamma$. Choose the rectification of $\bfX$ that is given by the embedding $x\mapsto (x,f(x)^{-1},j(x)^{-1})$, \Rami{and choose the standard rectificaion of $\bfY$}. Fix $m$. Write $$\gamma(x)=\frac{h(x)}{f(x)^M}=\frac{\sum_{|i|\leq M} a_i x^i}{f(x)^M},$$
    in multi-index notation, where $a_i$ are vectors.  Note that $\val(a_i)=0$, where the valuation of a vector is defined to be the minimum of the valuations of its coordinates. 
    Let $$m'=mM+m(\deg(f) M +M+\deg(j)) + 8mdM \deg(f)
 + Mm.$$ Take $y'\in B^{\bfY,k}_{-m'}(\gamma(x))$. We have to find $x'\in  B^{\bfX,k}_{-m}(x)$ such that $\gamma(x')=y'$. Define recursively a sequence $x_r \in B^{\bfX,k}_{-m}(x)$. Set $x_0=x$, and define $$x_{r+1}:= x_r+(D_{x_r}\gamma)^{-1}(y'-\gamma(x_r)).$$
    We will show by induction \RamiC{on $r$} that \RamiC{for any $r\geq 0$}:
    \begin{enumerate}[$(a_r)$]
        \item \label{lem:eff.imp:1}
        $\val(y'-\gamma(x_r))\RamiC{\geq}m'+r$
        \item\label{lem:eff.imp:3} $x_{r}\in B_{m}^{\bfX,k}$
        \item\label{lem:eff.imp:2} $x_{r+1}\in B_{-m-r}^{\bfX,k}(x_r)$
    \end{enumerate}    
    Before proving these statements, we will show that for any given $r$ statements 
    \RamiC{$(\ref{lem:eff.imp:1}_r)$}
    and \RamiC{$(\ref{lem:eff.imp:3}_r)$}  imply \RamiC{$(\ref{lem:eff.imp:2}_r)$}. 
For this note that for $w\in \F_{l^k}((t))^d$ we have 
\RamiC{
\begin{align*}        \val\left((D_{x_r}\gamma)^{-1}w\right)
    &= \val\left(\det\left(D_{x_r}\gamma\right)^{-1}Adj\left(D_{x_r}\gamma\right)w\right)
    = \val\left(j(x_r)^{-1}Adj\left(D_{x_r}\gamma\right)w\right)
    \\
    &\geq
    \val\left(j(x_r)^{-1}\right)
    + \val(w) + (d-1)\min_{1\leq s,t\leq d} \val\left((D_{x_r}\gamma)_{s,t}\right)
    \\
    &=        \val\left(j(x_r)^{-1}\right)
    + \val(w) + (d-1)\min_{1\leq s,t\leq d} \val\left(\partial_t\gamma_s (x_r)\right)
    \\
    &=     \val\left(j(x_r)^{-1}\right)
    + \val(w)+ (d-1)\min_{1\leq s,t\leq d} \val \left( \partial_t\left (\tfrac{h_s}{f^{M}}\right)(x_r) \right)   
    \\
    &=
    \val\left(j(x_r)^{-1}\right)
    + \val(w) + (d-1)\min_{1\leq s,t\leq d} \val\left(\tfrac{f^{M}(x_r) \partial_t h_s(x_r)- \partial_t(f^M)(x_r) h_s(x_r)}{f(x_r)^{2M}}\right)    
    \\
    &=
    \val\left(j(x_r)^{-1}\right)
    + \val(w) +\\
    &+ (d-1)\min_{1\leq s,t\leq d}\left( \val\left(f(x_r)^{M}\partial_t h_s(x_r)- \partial_t(f^M)(x_r) h_s(x_r)\right)     -  \val\left(f(x_r)^{2M}\right)\right) 
        \\&=    \val\left(j(x_r)^{-1}\right)
    + \val(w) +\\
    & +(d-1)\min_{1\leq s,t\leq d}\left( \min(\val\left(f(x_r)^{M}\partial_t h_s(x_r)\right),\val\left( \partial_t(f^M)(x_r) h_s(x_r)\right) \right)  +\\& +\min \left(-  \val\left(f(x_r)^{2M}\right)\right) 
\end{align*}

Statement \RamiC{$(\ref{lem:eff.imp:3}_r)$} implies that:
\begin{align*}        &
 \val\left(j(x_r)^{-1}\right)
    + \val(w) + (d-1)\min_{1\leq s,t\leq d}\left( \min(\val\left(f(x_r)^{M}\partial_t h_s(x_r)\right),\val\left( \partial_t(f^M)(x_r) h_s(x_r)\right) \right)  +\\&+\min \left(-  \val\left(f(x_r)^{2M}\right)\right) 
    \geq
-m-d(\deg(f) M+M)m-2dMm 
    \geq
-4mdM \deg(f)
    \end{align*}}
    So 
    \begin{equation}\label{lem:eff.imp:dga}
    \val((D_{x_r}\gamma)^{-1}w)\geq -4mdM \deg(f) +\val(w).    
    \end{equation}
    
    This implies:
            \begin{align*}        
        \val(x_{r+1}-x_{r})=\val((D_{x_{r}}\gamma)^{-1}(y'-\gamma(x_{r})))\geq -4mdM \deg(f)+\val(y'-\gamma(x_{r}))       
    \end{align*}
    So, by \RamiC{$(\ref{lem:eff.imp:1}_r)$} we obtain:
    \begin{equation}\label{lem:eff.imp:x}            
    \val(x_{r+1}-x_{r})\geq -4mdM \deg(f)+m'+r\geq 3m+m\deg(f)+m\deg(j)+r
    \end{equation}
    This implies that 
    \begin{equation}
    \val(f(x_{r+1})-f(x_{r}))\geq \val(x_{r+1}-x_{r})-m\deg(f)\geq 3m+r.
    \end{equation}
    So $$\val(f(x_{r+1}))=\val((f(x_{r+1})-f(x_r))+f(x_r))\leq m.$$
    Thus    
    \begin{equation}\label{lem:eff.imp:f}            
\val(\frac{1}{f(x_{r+1})}-\frac{1}{f(x_{r})})=
    \val(\frac{f(x_{r})-f(x_{r+1})}{f(x_{r+1})f(x_{r})})> m+r.\end{equation}
Similarly, 
        \begin{equation}\label{lem:eff.imp:j}            
        \val(\frac{1}{j(x_{r+1})}-\frac{1}{j(x_{r})})        
    > m+r.\end{equation}
    Formulas \eqref{lem:eff.imp:x},\eqref{lem:eff.imp:f} and \eqref{lem:eff.imp:j} (which are proven under the assumptions of \RamiC{$(\ref{lem:eff.imp:1}_r)$} and \RamiC{$(\ref{lem:eff.imp:3}_r)$}) imply \RamiC{$(\ref{lem:eff.imp:2}_r)$}.

    We now proceed to prove by induction \RamiC{$(\ref{lem:eff.imp:1}_r)$}, \RamiC{$(\ref{lem:eff.imp:3}_r)$} and \RamiC{$(\ref{lem:eff.imp:2}_r)$}. The \RamiC{base statements  \RamiC{$(\ref{lem:eff.imp:1}_0)$},  and \RamiC{$(\ref{lem:eff.imp:3}_0)$} are obvious}.  So the base statement \RamiC{$(\ref{lem:eff.imp:2}_0)$} follows from the above. For the induction step we assume  \RamiC{$(\ref{lem:eff.imp:1}_r)$}, \RamiC{$(\ref{lem:eff.imp:3}_r)$} and \RamiC{$(\ref{lem:eff.imp:2}_r)$} for an integer  $r$ and prove \RamiC{\RamiC{$(\ref{lem:eff.imp:1}_{r+1})$}, \RamiC{$(\ref{lem:eff.imp:3}_{r+1})$} and \RamiC{$(\ref{lem:eff.imp:2}_{r+1})$}.}

    \RamiC{{By \Cref{lem:tri}\eqref{lem:tri:mon} statement  $(\ref{lem:eff.imp:3}_{r+1})$} follows from  \RamiC{$(\ref{lem:eff.imp:2}_r)$} and \RamiC{$(\ref{lem:eff.imp:3}_r)$}}. Also, as shown before,  \RamiC{$(\ref{lem:eff.imp:1}_{r+1})$} and \RamiC{$(\ref{lem:eff.imp:3}_{r+1})$} imply \RamiC{$(\ref{lem:eff.imp:2}_{r+1})$}. It is left to show 
    \RamiC{$(\ref{lem:eff.imp:1}_{r+1})$} \RamiC{and we can use {$(\ref{lem:eff.imp:3}_{r+1})$}  for this}.  
    For  $z\in \F_{l^k}((t))^d$, write $$\gamma(x_r+z)= \gamma(x_r)+(D_{x_r}\gamma)z+\frac{\sum_{2\leq |i|\leq M} b_i z^i}{f(x_r+z)^M}.$$ 
Let $\delta(\RamiC{w}):=f(\RamiC{w})^M(\gamma(\RamiC{w})-\gamma(x_r)- (D_{x_r}\gamma)(\RamiC{w}-x_r)).$ Note that 
\begin{itemize}
    \item this is a polynomial map.
        \item The valuation of its coefficients are bounded from below by $-mM$.
        \item Its degree is bounded by $\deg(f) M +M$.
\end{itemize} 
Define $\delta^{[i]}$ as in the proof of \Cref{lem:et.mono}, but for vector valued functions of a vector variable.    
    We have
    \begin{align*}        \val(b_i)=\val(\delta^{[i]}(x_r))\geq -mM-m(\deg(f) M +M).
    \end{align*}
    Setting $$z:=(D_{x_r}\gamma)^{-1}(y'-\gamma(x_r)),$$
    formula \eqref{lem:eff.imp:dga} above implies 
$$\val(z)\geq -4mdM \deg(f) +\val(y'-\gamma(x_r)).$$
    So,

\begin{align*}        
\val\left(y'-\gamma(x_{r+1})\right)&=
\val\left(y'-\gamma(x_r+z)\right)=
\\&=
\val\left(y'-
\left(\gamma(x_r)+(D_{x_r}\gamma)z+\frac{\sum_{2\leq |i|\leq M} b_i z^i}{f(x_r+z)^M}\right)\right)
=
\\&=
\val\left(y'-
\gamma(x_r)-(D_{x_r}\gamma)z-\frac{\sum_{2\leq |i|\leq M} b_i z^i}{f(x_r+z)^M}\right)
=
\\&=
\val\left(\frac{\sum_{2\leq |i|\leq M} b_i z^i}{f(x_r+z)^M}\right)
=
\val\left(\sum_{2\leq |i|\leq M} b_i z^i\right)-\val\left(f(x_r+z)^M\right)
\\&=
\val\left(\sum_{2\leq |i|\leq M} b_i z^i\right)-\val\left(f(x_{r+1})^M\right)
\\&\geq
\min_{2\leq |i|\leq M} \left(\val(b_i)+|i| \val(z)\right)-\val\left(f(x_{r+1})^M\right)
\\&\geq
-mM-m\left(\deg(f) M +M\right) +2 \val(z)-M\val\left(f(x_{r+1})\right)
\\&\geq
-mM-m\left(\deg(f) M +M\right) -8mdM \deg(f)+2\val\left(y'-\gamma(x_r)\right) -\\& 
\quad\, - M\val\left(f(x_{r+1})\right).
\end{align*}

By \RamiC{$(\ref{lem:eff.imp:3}_{r+1})$}, we know that $$\val({f(x_{r+1})^{-1}})\geq -m.$$
By \RamiC{$(\ref{lem:eff.imp:1}_r)$}, we know that $$\val(y'-\gamma(x_r))\geq m'+r.$$
Thus
    \begin{align*}        \val(y'-\gamma(x_{r+1}))&\geq
-mM-m(\deg(f) M +M) -8mdM \deg(f)
+2m' +2r - Mm\geq
\\&\geq
m'+r+1,
    \end{align*}
    proving \RamiC{$(\ref{lem:eff.imp:1}_{r+1})$} and finishing the proof of \RamiC{$(\ref{lem:eff.imp:1}_r)$}, \RamiC{$(\ref{lem:eff.imp:3}_r)$} and \RamiC{$(\ref{lem:eff.imp:2}_r)$ for every $r$}.

    Now, \RamiC{$(\ref{lem:eff.imp:2}_r)$} implies that $\val(x_{r+1}-x_r)\geq m+r$. Thus the sequence $x_\RamiC{r}$ converges. Let $x':=\lim_{\RamiC{r}\to \infty} x_\RamiC{r}.$ Using \Cref{lem:tri}\eqref{lem:tri:ultra} and \RamiC{$(\ref{lem:eff.imp:2}_r)$} again, we get that \RamiC{$x_r\in B_{\RamiC{-m}}^{\bfX,k}(x)$ for any $r$. Thus  $x'\in B_{\RamiC{-m}}^{\bfX,k}(x)$.} Finally, \RamiC{$(\ref{lem:eff.imp:1}_r)_{r \in \N}$}  implies that $\gamma(x')=y'$ as required.
    \item $\bfX$ and $\bfY$ are almost affine spaces and $\dim(\bfX)=\dim(\bfY)$:\\
        Follows immediately from the previous case.    
    \item $\bfX$ and $\bfY$ are almost affine spaces, and there exists a \RamiC{morphism} $\RamiC{p:\bfX}\to \bA^{\dim \bfX-\dim \bfY}$ such that the map $\RamiC{\gamma\times p:}\bfX\to \bfY\times \bA^{\dim \bfX-\dim \bfY}$ is \'etale:\\
        Follows from the previous case applied to the map $\RamiC{\gamma\times p}$.
    \item $\bfX$ \RamiC{is an open subset of an affine space} and $\bfY$ \RamiC{is} almost affine spaces:\\
        We can find a finite cover  $\bfX:=\bigcup \bfU_i$ such that  $\gamma|_{\bfU_i}$ satisfies the conditions of 
        the previous case. The statement now follows from the previous case.
    \item $\bfY$ is an almost affine space and $\bfX$ is a 
 fiber of a smooth map between almost affine spaces:\\
 Write $\bfX$ as the fiber of a smooth map $\bfU\to \bfV$.
 \RamiC{Extend the map $\gamma$ to a map $\gamma':\bfU\to \bfY$, this defines a map $\gamma'':\bfU\to \bfV\times \bfY$. Note that $\gamma''$ is smooth at $\bfX$. Thus we can find $\bfU'\subset \bfU$ containing $\bfX$ such that $\gamma''|_{\bfU'}$ is smooth.}
 The statement follows now from the previous case applied to  \Rami{$\gamma''|_{\bfU'}$}. 
    \item $\bfY$ is an almost affine space:\\
            We can find a  finite cover  $\bfX:=\bigcup \bfU_i$ such that  $\gamma|_{\bfU_i}$ satisfies the conditions of 
        the previous case. The statement now follows from the previous case.
    \item $\bfY$ is an affine variety that admits an \'etale map to an affine space:\\
Let $\delta: \bfY\to \bfU$ be an \'etale map to an almost affine space. Fix $m$. By \Cref{lem:cont.bnd} there is $m_1>m$  such that for any $k\in\N$  we have    
\begin{equation}\label{lem:eff.imp:bnd}  
\gamma(B^{\bfX,k}_{m}))\subset B^{\bfY,k}_{m_1}.
\end{equation}
By \Cref{lem:et.mono}  there is $m_2>m_1$  such that for any $k\in\N$ and any $x\in B^{\bfY,k}_{m_1}$ we have   \begin{equation}\label{lem:eff.imp:mono}
\delta|_{B^{\bfY,k}_{-m_2}(x)}\quad \text{ is a monomorphism.}
\end{equation}
By \Cref{lem:cont.bnd} there is $m_3>m_2$  such that for any $k\in\N$ and any $x\in B^{\bfX,k}_{m}$ we have    
\begin{equation}\label{lem:eff.imp:cont.g}
\gamma(B^{\bfX,k}_{-m_3}(x))\subset B^{\bfY,k}_{-m_2}(\gamma(x)).
\end{equation}
By the previous case there is $m_4>m_3$  such that for any $k\in\N$ and any $x\in B^{\bfX,k}_{m}$ we have    
\begin{equation}\label{lem:eff.imp:open}
\delta(\gamma(B^{\bfX,k}_{-m_3}(x)))\supset B^{\bfU,k}_{-m_4}(\delta(\gamma(x))).
\end{equation}

By \Cref{lem:cont.bnd} there is $m_5>m_4$  such that for any $k\in\N$ and any $x\in B^{\bfY,k}_{m_5}$ we have    
\begin{equation}\label{lem:eff.imp:cont.d}
\delta(B^{\bfY,k}_{-m_5}(x))\subset B^{\bfU,k}_{-m_4}(\delta(x)).
\end{equation}

Take $m'=m_5$. Let $k$ be an integer and $x\in B^{\bfX,k}_{m}$. Let $y'\in  B^{\bfY,k}_{-m'}(\gamma(x))$. we have to find $x'\in B^{\bfX,k}_{-m}(x)$ such that $\gamma(x')=y'$. 

By \eqref{lem:eff.imp:bnd} $\gamma(x)\in  B^{\bfY,k}_{m_1}.$ 
So, by \eqref{lem:eff.imp:cont.d} 
$$\delta(y')\in B^{\bfU,k}_{-m_4}(\delta(\gamma(x))).$$
Thus, by \eqref{lem:eff.imp:open}, there is $x'\in B^{\bfX,k}_{-m_3}(x)$ such that 
\begin{equation}\label{lem:eff.imp:eq.d} 
\delta(\gamma(x'))=\delta(y').
\end{equation}
By \eqref{lem:eff.imp:cont.g} $$\gamma(x')\in B^{\bfY,k}_{-m_2}(\gamma(x)).$$ Also 
$$y'\in B^{\bfY,k}_{-m_2}(\gamma(x)).$$ Therefore, \eqref{lem:eff.imp:mono} and \eqref{lem:eff.imp:eq.d} imply that $\gamma(x')=y'$ as required.
    \item General case:\\
    We find a finite cover $\bfY=\bigcup \bfU_i$ such that 
    the maps $\gamma_i:\gamma^{-1}(\bfU_i)\to \bfU_i$  obtained by the restriction of $\gamma$  to
    $\bfU_i$
    satisfy the conditions of the previous case. So the previous case implies the assertion.
\end{enumerate}
\end{proof}

\begin{prop}\label[prop]{lem:dev}
Let $\bfX$ be a rectified variety.
    Let $\bfU\subset \bfX$ be open and $\bfZ:=\bfX\smallsetminus \bfU$. Then for any $m\RamiC{\in \N}$ there exists $m'\RamiC{>m}$ such that for any $k$ we have $$B^{\bfX,k}_m\subset B^{\bfU,k}_{m'}\cup B^{\bfX,k}_{-m}(\bfZ).$$
\end{prop}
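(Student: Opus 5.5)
The idea is to stratify $\bfZ$ into smooth pieces and reduce to the case where $\bfZ$ is smooth and is cut out by a smooth map. For that case the effective implicit function theorem (\Cref{lem:eff.imp}) does the work. By \Cref{lem:indep-rect} the statement does not depend on the rectifications of $\bfX$ and $\bfU$, so we choose them as convenient throughout. Note also that if $\bfX=\bigcup \bfX_i$ is a finite open cover (with the rectification of $\bfX$ assembled from those of the $\bfX_i$), then the statement for each pair $(\bfX_i,\bfU\cap\bfX_i)$ implies it for $(\bfX,\bfU)$. Indeed, $B^{\bfX_i,k}_{-m}(\bfZ\cap \bfX_i)\subset B^{\bfX,k}_{-m}(\bfZ)$, and $B^{\bfU\cap \bfX_i,k}_{m'}\subset B^{\bfU,k}_{m''}$ by \Cref{lem:cont.bnd} applied to the inclusion.

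\emph{Base case.} Suppose $\bfX$ is affine with a simple rectification and $\bfZ=\phi^{-1}(0)$ for a smooth map $\phi:\bfX\to\bA^c$. First I will treat the case where $\bfZ$ is defined by these equations and $\bfU=\bfX\smallsetminus\bfZ$ is covered by the principal opens $\bfX_{\phi_i}$. Rectify $\bfU$ via $\bfX_{\phi_i}\hookrightarrow \bfX\times\bA^1$, $x\mapsto(x,\phi_i(x)^{-1})$.

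Fix $m$ and let $x\in B^{\bfX,k}_m$. Apply \Cref{lem:eff.imp} to $\phi$ to get $m_1$ such that $\phi(B^{\bfX,k}_{-m}(x))\supset B^{\bA^c,k}_{-m_1}(\phi(x))$. There are two possibilities.
\begin{itemize}
\item If $\val(\phi(x))\ge m_1$, then $0\in B^{\bA^c,k}_{-m_1}(\phi(x))$. So there is $z\in B^{\bfX,k}_{-m}(x)$ with $\phi(z)=0$, i.e.\ $z\in\bfZ(\F_{\ell^k}((t)))$. By \Cref{lem:tri}\eqref{lem:tri:sym}, $x\in B^{\bfX,k}_{-m}(z)\subset B^{\bfX,k}_{-m}(\bfZ)$.
\item Otherwise some coordinate satisfies $\val(\phi_i(x))<m_1$, so $|\phi_i(x)^{-1}|\le |t^{-m_1}|$. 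Then $x\in B^{\bfU,k}_{\max(m,m_1)}$. Take $m'=\max(m,m_1)+1$.
\end{itemize}

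\emph{General case.} I will argue by Noetherian induction on $\bfZ$. Choose a closed $\bfZ'\subsetneq\bfZ$ such that $\bfZ\smallsetminus\bfZ'$ is smooth. Then choose an affine open cover $\{\bfX_i\}$ of $\bfX\smallsetminus\bfZ'$ on which $\bfZ\smallsetminus\bfZ'$ is the zero locus of a smooth map to $\bA^{c_i}$; this uses that $\bfX$ is smooth and that a smooth closed subvariety of a smooth variety is locally a complete intersection with independent differentials.

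By the base case and the covering remark, $\bfX\smallsetminus\bfZ'$ satisfies the statement relative to its open subset $\bfU$. By induction, $\bfX$ satisfies it relative to $\bfX\smallsetminus\bfZ'$ (whose complement is $\bfZ'$). Now chain the two. Given $m$, first get $m_1$ with
\[B^{\bfX,k}_m\subset B^{\bfX\smallsetminus\bfZ',k}_{m_1}\cup B^{\bfX,k}_{-m}(\bfZ').\]
Then apply the smooth-stratum statement on $\bfX\smallsetminus\bfZ'$ with radius parameter $\max(m,m_1)$. This gives
\[B^{\bfX\smallsetminus\bfZ',k}_{m_1}\subset B^{\bfU,k}_{m'}\cup B^{\bfX\smallsetminus\bfZ',k}_{-m_1}(\bfZ\smallsetminus\bfZ').\]

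Finally, compare the last term with $B^{\bfX,k}_{-m}(\bfZ)$. The map $\bfX\smallsetminus\bfZ'\to\bfX$ satisfies \Cref{lem:cont.bnd}\eqref{lem:cont.bnd:S2}, so after enlarging $m_1$ (relative to $m$) a ball of valuative radius $-m_1$ around a point of $\bfZ\smallsetminus\bfZ'$ in the intrinsic rectification lies in the $\bfX$-ball of radius $-m$ around it. \Cref{lem:cont.bnd}\eqref{lem:cont.bnd:S2} only controls balls centred at points of $B^{\bfX\smallsetminus\bfZ',k}_{m_1}$. The centres $z$ here are within radius $-m_1$ of such a point, so \Cref{lem:tri}\eqref{lem:tri:mon} places them in a bounded ball, and the comparison applies.

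\emph{Main obstacle.} The delicate step is the bookkeeping of quantifier order in this chaining, since the radius $-m_1$ used near the smooth stratum must be chosen after $m_1$ and depends on it. I expect this to close with the order above: fix $m$, get $m_1$ from induction, enlarge via \Cref{lem:cont.bnd}, then apply the base case. The other point needing care is the local presentation of $\bfZ\smallsetminus\bfZ'$ as a smooth fibre. It is standard, but it requires shrinking to affine pieces where the defining functions have independent differentials along $\bfZ$; then $\phi$ is smooth only near $\bfZ$, so one further shrinks $\bfX_i$ to the smooth locus of $\phi$, which still contains $\bfZ\cap\bfX_i$.
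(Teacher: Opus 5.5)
Your proposal is correct and follows essentially the same route as the paper: a base case via \Cref{lem:eff.imp} with $\bfU$ rectified through the principal opens $\bfX_{\phi_i}$, then locality on $\bfX$, then induction by removing a proper closed subset $\bfZ'$ of $\bfZ$ (the paper takes the singular locus and inducts on $\dim\bfZ$) and chaining the two inclusions. The one genuine difference is the final comparison of $\bfX\smallsetminus\bfZ'$-balls with $\bfX$-balls: the paper simply picks a rectification of $\bfX\smallsetminus\bfZ'$ whose balls lie inside the corresponding $\bfX$-balls, while you use \Cref{lem:cont.bnd}\eqref{lem:cont.bnd:S2} on the bounded set $B^{\bfX\smallsetminus\bfZ',k}_{m_1}$. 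Your version works once the radius for the smooth stratum is a fresh constant $m_3$, chosen after $m_1$, rather than an enlarged $m_1$; the displayed inclusion should then read $B^{\bfX\smallsetminus\bfZ',k}_{m_1}\subset B^{\bfU,k}_{m'}\cup B^{\bfX\smallsetminus\bfZ',k}_{-m_3}(\bfZ\smallsetminus\bfZ')$.
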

\begin{proof}
$ $
\begin{enumerate}[{Case} 1.] 
    
        \item the rectification of $\bfX$ is simple,
        $\bfZ$ is a fiber of a smooth map $\delta:\bfX\to \bfY$, and $\bfY$ has simple rectification:\\
        Let $x_1,\dots,x_d$ be the coordinates of the affine space that includes $\bfY$.
        WLOG assume that $\bfZ$ is the fiber of $0\in \bfY(\F_\ell)$.
        Let $f_i:=\delta^*(x_i)$. Embed $\bfX_i:=\bfX_{f_i}$ to affine space in the standard way, and choose the rectification of $\bfU$ given  by these embeddings. It is easy to see that for any integers $m',k$ we have $$B^{\bfU,k}_{m'}=B^{\bfX,k}_{m'}\smallsetminus \delta^{-1}(B^{\bfY,k}_{-m'-1}(0)).$$
        Indeed, 
        \begin{align*}                B^{\bfU,k}_{m'}&=\bigcup_i B^{\bfX_i,k}_{m'}=\bigcup_i\{x\in \bfX_i(\F_{\ell^k}((t)))| x\in B^{\bfX,k}_{m'}; \val(f_i(x)^{-1})\geq -m' \}=
        \\&=
        \bigcup_i (B^{\bfX,k}_{m'}\smallsetminus \{x\in \bfX(\F_{\ell^k}((t)))|\val(f_i(x))>m' \})=
        \\&=
                B^{\bfX,k}_{m'}\smallsetminus \bigcap_i  \{x\in \bfX(\F_{\ell^k}((t)))|\val(f_i(x))>m' \}=
        \\&=B^{\bfX,k}_{m'}\smallsetminus \delta^{-1}(B^{\bfY,k}_{-m'-1}(0)).
        \end{align*}
Fix $m$. By \Cref{lem:eff.imp} there exists 
 $m_1\RamiC{>m}$ such that for any $k\RamiC{\in \N}$ and any $x\in B^{\bfX,k}_m$ we have $$\delta(B^{\bfX,k}_{-m}(x))\supset B^{\bfY,k}_{-m_1}(\delta(x)).$$
 Take $m'=m_1+1$. It is left show that $$\delta^{-1}(B^{\bfY,k}_{-m'-1}(0))=\delta^{-1}(B^{\bfY,k}_{-m_1}(0))\subset B^{\bfX,k}_{-m}(\bfZ).$$ Let $x\in \delta^{-1}(B^{\bfY,k}_{-m_1}(0))$. We have to show that $x\in B^{\bfX,k}_{-m}(\bfZ)$.
We have $$0\in B^{\bfY,k}_{-m_1}(\delta(x))\subset \delta(B^{\bfX,k}_{-m}(x)).$$ So we have $z\in  B^{\bfX,k}_{-m}(x)$ such that $\delta(z)=0$. By \Cref{lem:tri} this implies that $x\in B^{\bfX,k}_{-m}(z)$. Also $z\in \bfZ(F_{\ell^k}((t)))$. So 
$x\in B^{\bfX,k}_{-m}(B^{\bfX,k}_m\cap \bfZ)$, as required.
        \item $\bfZ$ is smooth.\\
        Follows from the previous step and from the fact that the question is local on $\bfX$.
        \item General case\\
        We will prove the statement by induction on $\dim \bfZ$.
        Let $\bfZ'$ be the singular locus of $\bfZ$. Let $\bfU'=\bfX  \smallsetminus \bfZ'$. Choose rectification of $\bfU'$ such that each ball in $\bfU'$ is contained in the corresponding ball in $\bfX$.
        Fix $m$.
        By the induction assumption  there exist $m_1>m$ such that for any $k$ we have $$B^{\bfX,k}_m\subset B^{\bfU',k}_{m_1}\cup B^{\bfX,k}_{-m}(\bfZ')$$
        By the previous case
        there exist $m_2>m_1$ such that for any $k$ we have $$B^{\bfU',k}_{m_1}\subset B^{\bfU,k}_{m_2}\cup B^{\bfU',k}_{-{m_1}}((\bfU'\cap \bfZ))$$
        Take $m'=m_2$. We get
        \begin{align*}
        B^{\bfX,k}_m&\subset B^{\bfU',k}_{m_1}\cup B^{\bfX,k}_{-m}(\bfZ')\subset
        \\&\subset        
        B^{\bfU,k}_{m_2}\cup B^{\bfU',k}_{-{m_1}}((\bfU'\cap \bfZ))\cup B^{\bfX,k}_{-m}(\bfZ')
        \\&\subset        
        B^{\bfU,k}_{m_2}\cup B^{\bfX,k}_{-{m}}((\bfU'\cap \bfZ))\cup B^{\bfX,k}_{-m}(\bfZ')
        \\&\subset        
        B^{\bfU,k}_{m_2}\cup B^{\bfX,k}_{-{m}}(\bfZ)
        \end{align*}
        as required.

    \end{enumerate}
\end{proof}

\begin{thm}\label[thm]{lem:bnd.on.sup}
    Let $\gamma: \bfX\to \bfY$ be  a submersion of \RamiB{$\mu$-rectified} varieties.  Then for any $m,m'\in \N$ there is $M\in \N$ such that for any $k$ we have  $$\mu_{m}^{\bfY,k}\cdot 1_{\supp(\gamma_*(\mu_{m'}^{\bfX,k}))} < \ell^{kM}\gamma_*(\mu_{m'}^{\bfX,k})$$
\end{thm}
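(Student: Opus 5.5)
Following the paper's introduction, I will reduce to the case where $\gamma$ is a composition $\bfX\xrightarrow{\eta}\bfY\times\bA^n\xrightarrow{p}\bfY$ with $\eta$ étale and $p$ the projection. For that case I will write the density of the push forward explicitly, and I will bound it from below using \Cref{lem:eff.imp}.

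\emph{Reductions.} The statement is insensitive to the $\mu$-rectifications up to enlarging $M$. This uses \Cref{lem:indep-rect}: changing rectifications changes $\mu_{m'}^{\bfX,k}$ only up to factors $\ell^{\pm k c}$ and shifts of the radius. The support of $\gamma_*(\mu_{m'}^{\bfX,k})$ is $\gamma(B^{\bfX,k}_{m'})$, which moves monotonically with $m'$. So I may enlarge the source ball on the upper side and shrink it on the lower side as needed. Next, by \cite[Tag 039P]{SP}, cover $\bfX$ by finitely many $\bfU_i$ with simple $\mu$-rectifications on which $\gamma$ factors as (étale to $\bfY\times\bA^{n}$) followed by the projection. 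Choose the rectification of $\bfX$ coming from this cover. Then $\mu^{\bfX,k}_{m'}\ge \mu^{\bfU_i,k}_{m'}$ for each $i$, and $\supp\gamma_*(\mu^{\bfX,k}_{m'})=\bigcup_i\supp\gamma_*(\mu^{\bfU_i,k}_{m'})$. Hence it suffices to treat each $\bfU_i$ separately. Similarly I localize on $\bfY$ so that it has a simple $\mu$-rectification.

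\emph{Main case.} Fix $y\in\gamma(B^{\bfX,k}_{m'})\cap B^{\bfY,k}_m$, say $y=\gamma(x)$ with $x\in B^{\bfX,k}_{m'}$. By \Cref{lem:eff.imp} there is $m_1$, independent of $k$, with $\gamma(B^{\bfX,k}_{-m_1'}(x))\supset B^{\bfY,k}_{-m_1}(y)$, for some $m_1'$ large. I will instead use the stronger local product structure. Apply \Cref{lem:eff.imp} to the étale map $\eta$, together with \Cref{lem:et.mono}. This shows that $\eta$ maps a ball $B^{\bfX,k}_{-a}(x)$ bijectively onto a set containing a polydisc $B^{\bfY,k}_{-b}(y)\times(\eta_2(x)+t^{b}\F_{\ell^k}[[t]]^n)$, with $a,b$ independent of $k$. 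Here $\eta_2$ is the $\bA^n$-component, and $a$ is chosen so that $B^{\bfX,k}_{-a}(x)\subset B^{\bfX,k}_{m'}$ by \Cref{lem:tri}. On this set the Jacobian ratio between $\omega_\bfX$ and $\eta^*(\omega_\bfY\wedge dz)$ is an invertible function. By \Cref{lem:cont.bnd}\eqref{lem:cont.bnd:S1} its absolute value on $B^{\bfX,k}_{m'}$ is at least $\ell^{-kc}$. Therefore, on $B^{\bfY,k}_{-b}(y)$, the measure $\gamma_*(\mu_{m'}^{\bfX,k})$ is bounded below by $\ell^{-kc}\cdot\ell^{-kbn}\,|\omega_\bfY|$. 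Since $|\omega_\bfY|1_{B^{\bfY,k}_m}\asymp\mu_m^{\bfY,k}$, the density of $\mu^{\bfY,k}_m$ with respect to $\gamma_*(\mu^{\bfX,k}_{m'})$ is at most $\ell^{k(c+bn)}$ at every point of the support. This gives $M=c+bn+1$.

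\emph{Main obstacle.} The delicate point is making the local inverse of $\eta$ effective with constants uniform in $k$. \Cref{lem:eff.imp} gives surjectivity onto a ball, and \Cref{lem:et.mono} gives injectivity on a smaller ball, but the radii must be chained carefully, as in Case 7 of the proof of \Cref{lem:eff.imp}. Only then is the image of the injectivity ball still guaranteed to contain a product ball of a $k$-independent radius. I will handle this with exactly the same chain of choices $m_1<m_2<\dots$ used there.
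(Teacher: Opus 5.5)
The step that fails as written is your reduction, not the one you flag as the main obstacle. In the statement the same radius $m'$ appears both in the indicator $1_{\gamma(B^{\bfX,k}_{m'})}$ and in $\gamma_*(\mu^{\bfX,k}_{m'})$. Suppose you pass to the nice rectification $\bfX'$. Covering $\gamma(B^{\bfX,k}_{m'})$ then forces a larger ball $B^{\bfX',k}_{m''}\supset B^{\bfX,k}_{m'}$. Bounding $\gamma_*(\mu^{\bfX,k}_{m'})$ from below by a push-forward from $\bfX'$ forces a smaller one, $B^{\bfX',k}_{m'''}\subset B^{\bfX,k}_{m'}$. But the theorem for $\bfX'$ only compares $\mu^{\bfY,k}_m$ on $\gamma(B^{\bfX',k}_{n})$ with $\gamma_*(\mu^{\bfX',k}_{n})$ for one and the same $n$. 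Moreover, $\gamma_*(\mu^{\bfX',k}_{m'''})$ vanishes off $\gamma(B^{\bfX',k}_{m'''})$, which need not contain $\gamma(B^{\bfX,k}_{m'})\cap B^{\bfY,k}_m$. So ``enlarge on one side and shrink on the other'' does not give the inequality. (The paper's proof also passes over this point with a bare reference to \Cref{lem:indep-rect}.)

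What rescues you is that your main case is pointwise, so you never need to change the rectification globally. Suppose $x\in B^{\bfX,k}_{m'}$ because $i_\alpha(x)\in t^{-m'}\F_{\ell^k}[[t]]^M$ for a chart $(\bfU_\alpha,i_\alpha,\omega_\alpha)$ of the given $\mu$-rectification. Cover $\bfU_\alpha$ by finitely many principal opens $W=(\bfU_\alpha)_f$, each contained in one of your nice opens $\bfU_i$ and embedded by $(i_\alpha,1/f)$. By \Cref{lem:indep-rect} applied to $\bfU_\alpha$, $x\in B^{W,k}_{m''}$ for one of these $W$, with $m''$ independent of $k$. For $a\ge m'$, the ultrametric inequality gives $B^{W,k}_{-a}(x)\subset i_\alpha^{-1}(t^{-m'}\F_{\ell^k}[[t]]^M)$, so $\mu^{\bfX,k}_{m'}\ge|\omega_\alpha|$ there. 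The ratio $\omega_\alpha/\eta^*(\omega_\bfY\wedge dz)$ is bounded below on $B^{W,k}_{m''}$ by \Cref{lem:cont.bnd}. From here your main-case computation goes through unchanged; the localization on $\bfY$ should be done in the same pointwise way.

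Apart from this, your main case is correct and uses the same ingredients as the paper's Cases 1--3. These are: the reduction via \cite[Tag 039P]{SP} to an \'etale map followed by a projection; \Cref{lem:eff.imp} for the \'etale factor, which makes the fibres of the image fat and produces the factor $\ell^{-kbn}$; and \Cref{lem:cont.bnd} for the form ratio. You organize this pointwise, whereas the paper composes two measure inequalities.

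\Cref{lem:et.mono} is superfluous. For a lower bound on a push-forward you only need each point of the polydisc to have at least one preimage in $B^{\bfX,k}_{-a}(x)$. This is exactly as in the paper's \'etale case, where the density is a count of preimages and only the fact that it is $\geq 1$ on the support is used. Moreover, \Cref{lem:eff.imp} is already proved for all smooth maps, so it applies directly to $\eta$ with the product rectification on $\bfY\times\bA^n$. For that rectification, $B^{\bfY\times\bA^n,k}_{-b}(\eta(x))$ is exactly your polydisc. So the ``main obstacle'' you describe does not arise, and no re-run of the chain from Case~7 of that proof is needed.
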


\begin{proof}
$ $
    \begin{enumerate}[{Case} 1.]    
        \item the \RamiB{$\mu$-rectification}s of $\bfX$ and of $\bfY$ are simple, $\gamma$  is \'etale and the form on $\bfX$ is the pullback of the form on $\bfY$:\\
        Take $M=1$.
        Let $\omega_\bfY$ be the form on $\bfY$ given by its \RamiB{$\mu$-rectification}. For $k\in\N$, let $\omega_{\bfY,k}:=(\omega_{\bfY})_{\F_{\ell^k}((t))}$ be the corresponding form on the extension of scalars $\bfY_{\F_{\ell^k}((t))}$.
        We have $$\gamma_*(\mu_{m'}^{\bfX,k})=f\cdot |\omega_\bfY|,$$ where $$f(y)=\#\{x\in B_{m'}^{\bfX,k}|\gamma(x)=y\}.$$
        Now, we have:
        \begin{align*}            
        \mu_{m}^{\bfY,k}\cdot 1_{\supp(\gamma_*(\mu_{m'}^{\bfX,k}))}&=1_{B_m^{\bfY,k}}\cdot 1_{\supp(\gamma_*(\mu_{m'}^{\bfX,k}))}\cdot |\omega_{\bfY,k}|=
        \\&=        
        1_{B_m^{\bfY,k}}\cdot 1_{\supp(f)}\cdot |\omega_{\bfY,k}|<\ell^k\cdot f \cdot |\omega_{\bfY,k}|=        
        \ell^{kM}\gamma_*(\mu_{m'}^{\bfX,k}),
        \end{align*}
        as required.
        \item the \RamiB{$\mu$-rectification}s of $\bfX$ and of $\bfY$ are simple and $\gamma$  is \'etale:\\
        Follows from the previous case and \Cref{lem:indep-rect}.
        \item the rectifications of $\bfX$ and of $\bfY$ are simple and  $\gamma=\gamma_1\circ \gamma_2$  where $\gamma_1:\bfY\times \bA^j\to \bfY$ is a projection and $\gamma_2$ is \'etale:\\
        Choose the \RamiB{$\mu$-rectification} of $\bfY\times \bA^j$ that is coming from the \RamiB{$\mu$-rectification} of $\bfY$ and the standard rectification of $\bA^j$. Let $\omega_{\bfY,k}$ be as above.
        
        Fix $m,m'\in \N$. 
        By \Cref{lem:cont.bnd} (applied to the map $\gamma_2$) there exists $m_1$ such that $$\gamma_2(B_{m'}^{\bfX,k}) \subset B_{m_1}^{\bfY\times \A^j,k}.$$
        By the previous case we have 
        $M_1\in \N$ such that for any $k$ we have  
        \begin{equation*}
          \mu_{m_1}^{\bfY\times \A^j,k}\cdot 1_{\supp((\gamma_2)_*(\mu_{m'}^{\bfX,k}))} < \ell^{kM_1}(\gamma_2)_*(\mu_{m'}^{\bfX,k}).  
        \end{equation*}
        By \Cref{lem:eff.imp} (applied to the map $\gamma_2$) we have $m_2$ such that for any $z\in \gamma_2(B_{m'}^{\bfX,k})$ we have \begin{equation}\label{lem:cont.bnd:gam2opn}
        B_{-m_2}^{\bfY\times\A^j,k}(z)\subset \gamma_2(B_{m'}^{\bfX,k}).
        \end{equation}        
        Take $M=M_1+Nm_2.$
        We have $$(\gamma_1)_*(\mu_{m_1}^{\bfY\times \A^j,k}\cdot 1_{\gamma_2(B_{m'}^{\bfX,k})})=g\cdot |\omega_{\bfY,k}|$$ where 
        \begin{align*}
           g(y):&=Vol(\{v\in \F_{\ell^k}((t))^j|(y,v)\in B_{m_1}^{\bfY\times \A^j,k}\cap \gamma_2(B_{m'}^{\bfX,k})\})=
           \\&=
           Vol(\{v\in \F_{\ell^k}((t))^j|(y,v)\in \gamma_2(B_{m'}^{\bfX,k})\}).   
        \end{align*}
        Note that by \eqref{lem:cont.bnd:gam2opn} if $g(y)\neq 0$ then $$g(y)\geq Vol(B_{-m_2}^{\bA^n,k}(0)) = \ell^{-kNm_2}.$$
        Thus
        \begin{align*}
        \mu_{m}^{\bfY,k}\cdot 1_{\supp(\gamma_*(\mu_{m'}^{\bfX,k}))}&= \mu_{m}^{\bfY,k}\cdot 1_{\supp(g)}\leq  \mu_{m}^{\bfY,k}\cdot g\ell^{kNm_2}\leq |\omega_{\bfY,k}| \cdot g\ell^{kNm_2}=
        \\&=
        (\gamma_1)_*(\mu_{m_1}^{\bfY\times \A^j,k}\cdot 1_{\gamma_2(B_{m'}^{\bfX,k})})\ell^{kNm_2}=
        \\&=
        (\gamma_1)_*(\mu_{m_1}^{\bfY\times \A^j,k}\cdot 1_{\supp((\gamma_2)_*(\mu_{m'}^{\bfX,k}))})\ell^{kNm_2}<
        \\&<
        (\gamma_1)_*(\ell^{kM_1}(\gamma_2)_*(\mu_{m'}^{\bfX,k}))\ell^{kNm_2}=
        \\&
        = \ell^{kM_1+kNm_2}\gamma_*(\mu_{m'}^{\bfX,k})=
         \ell^{kM}\gamma_*(\mu_{m'}^{\bfX,k})
        \end{align*}
        \item  The rectification of $\bfY$ is simple:\\
        By  \cite[\href{https://stacks.math.columbia.edu/tag/039P}{Tag 039P}]{SP} there is a cover $\bfX=\bigcup_{i=1}^j \bfU_i$ such that $\gamma|_{\bfU_i}$  satisfy the condition of the previous case. The statement follows now from the previous case and \Cref{lem:indep-rect}.      
        \item  General case:\\
        follows from the previous case and \Cref{lem:indep-rect}.  
    \end{enumerate}
\end{proof}
\section{$m$-smooth functions and measures}\label{ssec:smoothfunme}
In this subsection we give a quantitative notion for smoothness of functions and measures, and prove that this notion behaves well under push. In particular we prove \Cref{thm: eff.smooth.of.push.of.measures }.
\begin{definition}
    Let $\bfX$ be a rectified variety. Let $m,k\in\N$. We say that 
    $f\in C^\infty_\RamiD{c}(B_\infty^{\bfX,k})$
    is $m$-smooth if for any 
    $x\in B_\infty^{\bfX,k}$
    the function $f|_{B_{-m}^{\bfX,k}(x)}$ 
    is constant.\index{$m$-smooth}
\end{definition}
\begin{lem}[\RamiD{Criteria} for  $m$-smoothness]\label{lem:sm.crit}
    Let $\bfX$ be a rectified variety. Let $m,k\in\N$. 
    \begin{enumerate}
        \item     \label{lem:sm.crit:1}
    Let 
    $f\in C^\infty_\RamiD{c}(B_\infty^{\bfX,k})$    be a \Rami{real valued} function 
    such that for any 
    $x\in B_\infty^{\bfX,k}$
    we have $$\min(f(B_{-m}^{\bfX,k}(x)))=f(x).$$
    Then $f$ is $m$-smooth.       
\item \label{lem:sm.crit:2}
\RamiD{
    Let 
    $f\in C^\infty_\RamiD{c}(B_\infty^{\bfX,k})$    be a function 
    such that for any 
    $x\in \supp(f)$
    we have $f|_{B_{-m}^{\bfX,k}(x))}$ is constant.
    Then $f$ is $m$-smooth.       
}
    \end{enumerate}
    
\end{lem}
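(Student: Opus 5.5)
Both criteria follow from the symmetry of balls, \Cref{lem:tri}\eqref{lem:tri:sym}: if $y\in B^{\bfX,k}_{-m}(x)$ then $x\in B^{\bfX,k}_{-m}(y)$. This is the one tool that works for arbitrary, non-simple rectifications. For those, balls are unions over charts and the ultrametric property \Cref{lem:tri}\eqref{lem:tri:ultra} can fail. So the plan is to avoid \Cref{lem:tri}\eqref{lem:tri:ultra} entirely and compare values of $f$ at pairs of points that lie in each other's balls.

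For \eqref{lem:sm.crit:1}, fix $x\in B_\infty^{\bfX,k}$ and $y\in B_{-m}^{\bfX,k}(x)$. By hypothesis $f(x)=\min f(B_{-m}^{\bfX,k}(x))$, and hence $f(x)\le f(y)$. By symmetry $x\in B_{-m}^{\bfX,k}(y)$. Applying the hypothesis at $y$ gives $f(y)=\min f(B_{-m}^{\bfX,k}(y))\le f(x)$. Therefore $f(y)=f(x)$, so $f$ is constant on $B_{-m}^{\bfX,k}(x)$. Since $x$ was arbitrary, $f$ is $m$-smooth.

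For \eqref{lem:sm.crit:2}, the hypothesis already gives constancy on $B_{-m}^{\bfX,k}(x)$ when $x\in\supp(f)$. It remains to treat $x\notin\supp(f)$, where $f(x)=0$, and show that $f$ vanishes on all of $B_{-m}^{\bfX,k}(x)$. Suppose instead that some $y\in B_{-m}^{\bfX,k}(x)$ has $f(y)\neq 0$. Then $y\in\supp(f)$, so by hypothesis $f$ is constant on $B_{-m}^{\bfX,k}(y)$. By symmetry this ball contains $x$, which gives $f(x)=f(y)\neq 0$. This contradicts $x\notin\supp(f)$.

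There is no real obstacle here. The only point needing care is not to use the ultrametric property, which is available only for simple rectifications; relying on symmetry alone handles the general case.
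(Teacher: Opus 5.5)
Your proof is correct and follows the paper's approach. The paper's entire proof is that both criteria follow immediately from the symmetry \Cref{lem:tri}\eqref{lem:tri:sym}. You have written out the details: the two-sided comparison of minima for the first criterion, and the contradiction obtained from a point with nonzero value for the second.
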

\begin{proof}
    Follows immediately from \Cref{lem:tri}\eqref{lem:tri:sym}.
\end{proof}
\begin{lemma}\label{lem:ext.by.0.sm} 
Let $\bfX$ be a rectified algebraic  variety and 
$\bfU\subset \bfX$ be an open subset. Fix a rectification on $\bfU$. Let 
$f\in C^\infty(B_m^{\bfU,k})$ be an $m$-smooth function. Then there is $m'$ such that $f\in C^\infty(B_{m'}^{\bfX,k})$ is an $m'$-smooth function. 
\end{lemma}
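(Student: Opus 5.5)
The plan is to extend $f$ by zero from $B_\infty^{\bfU,k}$ to $B_\infty^{\bfX,k}$. We then check two things, with $m'$ independent of $k$: that the support lies in a ball $B_{m'}^{\bfX,k}$, and that the extended function is $m'$-smooth on $\bfX$. For the support, apply \Cref{lem:cont.bnd}\eqref{lem:cont.bnd:S1} to the open embedding $\bfU\to\bfX$. This gives $m_1$ with $B_m^{\bfU,k}\subset B_{m_1}^{\bfX,k}$ for all $k$, so $\supp f\subset B_{m_1}^{\bfX,k}$.

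For smoothness we use \Cref{lem:sm.crit}\eqref{lem:sm.crit:2}. It suffices to find $m'\geq m_1$ such that for every $k$ and every $x\in B_m^{\bfU,k}$ we have
$$B_{-m'}^{\bfX,k}(x)\subset B_{-m}^{\bfU,k}(x).$$
Indeed, $f$ is then constant on $B_{-m'}^{\bfX,k}(x)$ for $x\in\supp f$, since it is constant on the larger set $B_{-m}^{\bfU,k}(x)$. By \Cref{lem:indep-rect}\eqref{lem:indep-rect:1} (the parts on $B_m$ and on centered balls), we may replace the rectification of $\bfU$ by a convenient one, at the cost of enlarging $m$. Similarly, we may assume the rectification of $\bfX$ is simple by passing to a chart. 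This reduction uses that the statement is local and finitely many charts are involved. We then choose functions $g_1,\dots,g_r\in O(\bfX)$ with $\bfU=\bigcup_i\bfX_{g_i}$, and rectify $\bfU$ by the embeddings $x\mapsto (i(x),g_i(x)^{-1})$ of the principal opens $\bfX_{g_i}$.

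With this rectification the inclusion becomes an explicit computation, in the spirit of Case 6 of the proof of \Cref{lem:cont.bnd}. If $x\in B_m^{\bfU,k}$, then $x\in B_m^{\bfX_{g_i},k}$ for some $i$, so $\val(g_i(x))\le m$. Applying \Cref{lem:cont.bnd}\eqref{lem:cont.bnd:S2} to $g_i:\bfX\to\bA^1$ gives $m_2$ such that $y\in B_{-m_2}^{\bfX,k}(x)$ implies $\val(g_i(y)-g_i(x))\geq 3m+1$. Hence $\val(g_i(y))=\val(g_i(x))$; in particular $g_i(y)\ne0$, so $y\in\bfX_{g_i}$. Moreover,
$$\val\bigl(g_i(y)^{-1}-g_i(x)^{-1}\bigr)\geq 3m+1-2m> m.$$
Taking also $m_2\geq m$ for the $\bfX$-coordinates, we get $y\in i^{-1}\bigl(i(x)+t^{m}\F_{\ell^k}[[t]]^{M}\bigr)$ in the chart $\bfX_{g_i}$. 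Since $x$ lies in that chart, this says $y\in B_{-m}^{\bfU,k}(x)$. So $m'=\max(m_1,m_2)$, adjusted by the rectification-comparison constants, works.

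The step I expect to be most delicate is the bookkeeping around the union-over-charts definition of centered balls. Specifically, $B_{-m}^{\bfU,k}(x)$ is a union over all charts containing $x$, whereas our bound only produces one good chart $\bfX_{g_i}$. This is harmless because we only need the inclusion into the union. Transferring between rectifications must also be done with the centered-ball part of \Cref{lem:indep-rect}\eqref{lem:indep-rect:1} in the correct direction. Finally, $m$-smoothness with respect to the original rectification of $\bfU$ must be translated to the chosen one, which costs only a change of $m$ uniform in $k$.
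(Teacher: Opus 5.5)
Your frame is the paper's: support via \Cref{lem:cont.bnd}\eqref{lem:cont.bnd:S1}, and smoothness from the containment $B^{\bfX,k}_{-m'}(x)\subset B^{\bfU,k}_{-m}(x)$ for $x\in B^{\bfU,k}_m$, together with \Cref{lem:sm.crit}\eqref{lem:sm.crit:2}. The paper gets that containment in one line, by applying \Cref{lem:eff.imp} to the open embedding $\bfU\hookrightarrow\bfX$. You prove it by hand, using principal-open charts and continuity of $1/g_i$ as in Case 6 of \Cref{lem:cont.bnd}. When $\bfX$ has a simple rectification your computation is correct, and it is more elementary than the Newton iteration behind \Cref{lem:eff.imp}.

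The gap is the reduction ``we may assume the rectification of $\bfX$ is simple by passing to a chart \dots\ the statement is local''. A non-affine $\bfX$ has no simple rectification, so this is not an instance of \Cref{lem:indep-rect}. Locality does not suffice either. The dangerous union is on the $\bfX$ side, not the $\bfU$ side you discuss: $B^{\bfX,k}_{-m'}(x)$ is the union over \emph{every} chart $\bfV_\alpha\ni x$. This includes charts in which $x$ is close to $\bfX\smallsetminus\bfV_\alpha$ and $i_\alpha(x)$ is arbitrarily large, whereas $x\in B^{\bfU,k}_m$ bounds $x$ in one chart only. Controlling those pieces is an open-mapping statement for the chart inclusions, which is exactly what the paper imports from \Cref{lem:eff.imp}. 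Once you cite that, you can apply it to $\bfU\hookrightarrow\bfX$ directly, and that is the paper's proof.

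This step is more than bookkeeping. Let $\bfX=\A^2$ with the charts $\A^2$ (standard) and $\{x_1\neq 0\}$ embedded by $(x_1,x_1^{-1},x_1x_2)$. Take $x=(t^n,0)\in B^{\bfX,k}_0$ and $z=(t^n,1)$. The second chart gives $i_\alpha(z)-i_\alpha(x)=(0,0,t^n)$, so $z\in B^{\bfX,k}_{-n}(x)$ for every $n$, although $z-x=(0,1)$. Now take $\bfU=\bfX$ with the standard rectification and $f=1_{\F_{\ell^k}[[t]]\times t\F_{\ell^k}[[t]]}$, which is $1$-smooth on $\bfU$. Then $f$ is not $m'$-smooth on $\bfX$ for any $m'$ (take $n=m'$).

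So, with centered balls defined as unions over all charts, the chart-by-chart reduction is false for such rectifications. The same example also defeats \Cref{lem:cont.bnd}\eqref{lem:cont.bnd:S2} and \Cref{lem:eff.imp}, on which the paper's proof rests. Your argument is complete when $\bfX$ has a simple rectification. In general, the definition of centered balls (or the class of admissible rectifications) must be restricted before either proof goes through.
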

\begin{proof}
    Fix $m\in \N$.
    By \Cref{lem:cont.bnd}\eqref{lem:cont.bnd:S1} there exists $m_1>m$ such that for any $k\in \N$ we have $B_m^{\bfU,k}\subset B_{m_1}^{\bfX,k}$. 
    \RamiD{
    By \Cref{lem:eff.imp}  there exists $m'>m_1$ such that for any $k\in \N$ and any $x\in \supp(f) \subset  B_m^{\bfU,k}$ we have $B_{-m}^{\bfU,k}(x)\supset B_{-m'}^{\bfX,k}(x)$. By \Cref{lem:sm.crit}\eqref{lem:sm.crit:2} this implies the assertion.
    }    
\end{proof}
\begin{lem}\label{lem:poly:sm}
    Let $\bfX$ be a rectified algebraic variety and $f\in \cO^\times(\bfX)$. Then for any $m\in \N$ there exists $m'>m$ such that for any $k\in \N$ the function $|f| 1_{B^{\bfX,k}_m}$ on $B^{\bfX,k}_\infty$ is $m'$-smooth. 
\end{lem}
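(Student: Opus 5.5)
The function $|f|1_{B^{\bfX,k}_m}$ is a product of two functions. The plan is to show that each factor is $m'$-smooth for a suitable $m'$ independent of $k$, and then use that a product of $m'$-smooth functions is $m'$-smooth. The second factor is harmless. By \Cref{lem:tri}\eqref{lem:tri:mon}, if $x\in B^{\bfX,k}_m$ then $B^{\bfX,k}_{-m'}(x)\subset B^{\bfX,k}_m$ for any $m'\in\N$. Conversely, if $y\in B^{\bfX,k}_{-m'}(x)$ with $y\in B^{\bfX,k}_m$, then by \Cref{lem:tri}\eqref{lem:tri:sym} we have $x\in B^{\bfX,k}_{-m'}(y)\subset B^{\bfX,k}_m$. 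So $1_{B^{\bfX,k}_m}$ is constant on every ball of valuative radius $-m'$, and it is compactly supported since $B^{\bfX,k}_m$ is compact.

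By \Cref{lem:sm.crit}\eqref{lem:sm.crit:2}, it therefore suffices to show the following: there is $m'$ such that for every $k$ and every $x\in B^{\bfX,k}_m$, the function $|f|$ is constant on $B^{\bfX,k}_{-m'}(x)$. This is the main step, and it uses that $f$ is invertible.

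Apply \Cref{lem:cont.bnd}\eqref{lem:cont.bnd:S1} to the map $f^{-1}:\bfX\to\A^1$, with $\A^1$ carrying its standard rectification. This gives $m_1>m$ such that $|f(x)^{-1}|\le|t^{-m_1}|$, that is $\val(f(x))\le m_1$, for all $x\in B^{\bfX,k}_m$ and all $k$.

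Next apply \Cref{lem:cont.bnd}\eqref{lem:cont.bnd:S2} to $f:\bfX\to\A^1$ with radius parameter $m_1+1$ instead of $m$. This gives $m'>m_1+1$ such that for $x\in B^{\bfX,k}_{m_1+1}$, hence in particular for $x\in B^{\bfX,k}_m$, we have $f(B^{\bfX,k}_{-m'}(x))\subset B^{\A^1,k}_{-m_1-1}(f(x))$. In other words, $\val(f(y)-f(x))\ge m_1+1>\val(f(x))$ for $y$ in that ball. By the ultrametric inequality, $\val(f(y))=\val(f(x))$, so $|f(y)|=|f(x)|$. Hence $|f|$ is constant on $B^{\bfX,k}_{-m'}(x)$ for every $x\in B^{\bfX,k}_m$.

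Combining this with the behaviour of the indicator established above, $|f|1_{B^{\bfX,k}_m}$ is constant on $B^{\bfX,k}_{-m'}(x)$ for every $x$ in its support. \Cref{lem:sm.crit}\eqref{lem:sm.crit:2} then gives $m'$-smoothness. The constants $m_1$ and $m'$ come from \Cref{lem:cont.bnd} and so are independent of $k$, as required.

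The only genuine subtlety is obtaining the lower bound on $|f|$ uniformly in $k$. This is exactly where invertibility of $f$ enters, through \Cref{lem:cont.bnd} applied to $f^{-1}$. Without it, $|f|$ could approach $0$ near the zero locus of $f$ inside the ball, and no uniform $m'$ would exist.
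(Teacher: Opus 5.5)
Your proof is correct and matches the paper's argument. Applying \Cref{lem:cont.bnd}\eqref{lem:cont.bnd:S1} to $f^{-1}:\bfX\to\A^1$ is the same as the paper viewing $f$ as a map to $\A^1\smallsetminus 0$ with its standard rectification, and the step via \Cref{lem:cont.bnd}\eqref{lem:cont.bnd:S2} plus the ultrametric inequality is identical; your explicit treatment of the indicator via \Cref{lem:tri} and \Cref{lem:sm.crit}\eqref{lem:sm.crit:2} just spells out the paper's closing ``this implies the assertion''.
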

\begin{proof}
Consider $f$ as a function to $\A^1\smallsetminus 0$. By \Cref{lem:cont.bnd}\eqref{lem:cont.bnd:S1} there is $m_1>m$ such that  for any $k\in \N$ we have $$\max(\val(f(B_m^{\RamiD{\bfX},k}))) <m_1.$$ 
    By  \Cref{lem:cont.bnd}\eqref{lem:cont.bnd:S2}  we obtain that  there is $m_2>m_1$ such that  for any $k\in \N$
    and any $x\in B_m^{\bfX,k}$ we have $$\min(\val(f(B_{-m_2}^{\bfX,k}(x))-f(x)))> m_1.$$ 
    Take $m'=m_2$. We obtain that 
    for any $k\in \N$
    and any $x\in B_m^{\bfX,k}$ the function  $\val(f)|_{B_{-m'}^{\bfX,k}}$ is constant. This implies the assertion.
\end{proof}
\begin{lemma}\label{lem:sm.mes.def}
    Let $\gamma:\bfX_1 \to \bfX_2$ be an open embedding of  \RamiB{$\mu$-rectified} varieties. Then for any $m\in \N$ there is  $m' \in \N$ such that for any $k\in \N$ there is a $m'$-smooth function $f_k\in C^\infty_c(B_{m'}^{\bfX_2,k})$ such that:
    $$\gamma_*(\mu_m^{\bfX_1,k})=f_k\cdot \mu_{m'}^{\bfX_2,k}$$
\end{lemma}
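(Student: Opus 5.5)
We reduce to the case where both $\mu$-rectifications are simple and then compute the density explicitly. By \Cref{lem:indep-rect} and \Cref{lem:ext.by.0.sm}, changing the $\mu$-rectification of either variety only costs a change of $m'$. For $\bfX_2$ this works because passing between two $\mu$-rectifications multiplies the measure by a density. After refining covers, that density is locally of the form $|g|$ with $g$ an invertible regular function (the ratio of top forms), multiplied by indicator functions of balls. Both factors are smooth by \Cref{lem:poly:sm} and the description of balls. So, as in the earlier proofs, we choose covers so that the cover of $\bfX_1$ is obtained by intersecting the cover of $\bfX_2$ with $\bfX_1$ and then refining. The sum defining $\mu_m^{\bfX_1,k}$ is finite, so it suffices to treat one chart. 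We may therefore assume that $\bfX_2$ has a simple $\mu$-rectification $(i,\omega_2)$ and $\bfX_1$ has a simple $\mu$-rectification $(i_1,\omega_1)$.

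\textbf{Step 1 (density).} Set $g:=\omega_1/\gamma^*\omega_2\in\cO^\times(\bfX_1)$. Then
$$\gamma_*(\mu_m^{\bfX_1,k})=\gamma_*\bigl(|g|\,1_{B_m^{\bfX_1,k}}\bigr)\cdot|\omega_2|.$$
By \Cref{lem:cont.bnd}\eqref{lem:cont.bnd:S1} there is $m_1$ with $B_m^{\bfX_1,k}\subset B_{m_1}^{\bfX_2,k}$, so $|\omega_2|$ agrees with $\mu_{m_1}^{\bfX_2,k}$ on the support. Hence we may take
$$f_k:=|g|\,1_{B_m^{\bfX_1,k}},$$
viewed as a function on $\bfX_2(\F_{\ell^k}((t)))$ by extension by zero.

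\textbf{Step 2 (smoothness on $\bfX_1$).} By \Cref{lem:poly:sm}, there is $m_2$ such that for all $k$ the function $|g|1_{B_{m}^{\bfX_1,k}}$ is $m_2$-smooth with respect to the balls of $\bfX_1$. One point needs care here: \Cref{lem:poly:sm} controls $|g|\cdot 1_{B_m}$, but we also need the indicator itself to be constant on small balls. For a simple rectification this holds by \Cref{lem:tri}\eqref{lem:tri:ultra}, since $B_m^{\bfX_1,k}$ is a union of balls of radius $-1$, hence of radius $-m_2$.

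\textbf{Step 3 (transfer to $\bfX_2$).} Apply \Cref{lem:ext.by.0.sm} with $\bfU=\bfX_1$, enlarging $m_2$ to be at least $m$ if needed. This gives $m'$ such that $f_k$ is $m'$-smooth on $\bfX_2$ and supported in $B_{m'}^{\bfX_2,k}$. Taking $m'\ge m_1$, we also have $\mu_{m'}^{\bfX_2,k}=|\omega_2|$ on $\supp f_k$, which gives the required identity.

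\textbf{Main obstacle.} The delicate point is the reduction to simple $\mu$-rectifications. For $\bfX_1$, a multi-chart $\mu_m^{\bfX_1,k}$ is a finite sum of single-chart terms, and each term is handled as above; the resulting $f_k$ are summed, and sums of $m'$-smooth functions are $m'$-smooth. For $\bfX_2$, we choose a chart $\bfV_\beta$ containing the relevant piece of $\bfX_1$. Then $\mu_{m'}^{\bfX_2,k}$ restricted to that piece is $\sum_\beta |\omega_\beta|1_{B^\beta_{m'}}$, which is comparable to a single $|\omega_\beta|$ by an $m'$-smooth, bounded-below density. This density is again controlled by \Cref{lem:poly:sm} and by \Cref{lem:ext.by.0.sm} applied to $\bfV_\beta\subset\bfX_2$.
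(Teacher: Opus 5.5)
Your computation when both $\mu$-rectifications are simple is correct. Summing over the charts of $\bfX_1$ is what the paper's Case 2 does. The device in your last paragraph is the key idea of the paper's proof: compare the multi-chart measure $\mu_{m'}^{\bfX_2,k}$ with a single form through a smooth density that does not vanish on the relevant set, then divide by it.

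The weak point is the reduction that connects these pieces. \Cref{lem:indep-rect}\eqref{lem:indep-rect:2} only gives an inequality $\mu_m^{\bfX_1,k}<\ell^{km'}\mu_{m'}^{\bfX_2,k}$, not an identity with an $m'$-smooth density. \Cref{lem:ext.by.0.sm} is about functions, so it does not supply that identity either. In fact, the claim that changing a $\mu$-rectification ``only costs a change of $m'$'' is exactly the present lemma for $\gamma=\mathrm{id}$, so it cannot be quoted. Your justification is also imprecise: the density between two multi-chart $\mu$-rectifications is not locally $|g|$ times indicators. It is a ratio of sums $\sum_\alpha|\omega_\alpha|1_{B^\alpha}\big/\sum_\beta|\omega'_\beta|1_{B^\beta}$, and it is smooth only because the denominator is smooth and nonvanishing where it matters.

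This gap shows up in your final step. To use a chart form $\omega_\beta$ of $\bfX_2$ as reference, each piece of $\bfX_1$ must lie in a single $\bfV_\beta$. The charts of the given $\mu$-rectification of $\bfX_1$ need not do so. Refining that cover changes the $\mu$-rectification of $\bfX_1$, which is the same unjustified step again.

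The fix, which is the paper's proof, is to take the reference form on the source. Set up as follows:
\begin{itemize}
\item Let $\bfX_1$ be simple with form $\nu$; the general case is the finite sum over source charts, as you say.
\item Write $\bfX_2=\bigcup_i\bfU_i$ with forms $\omega_i$.
\item Choose $m_1$ with $\gamma(B_m^{\bfX_1,k})\subset B_{m_1}^{\bfX_2,k}$.
\end{itemize}
Then $\gamma^*(\mu_{m_1}^{\bfX_2,k})\cdot 1_{B_m^{\bfX_1,k}}=h_k\,\mu_m^{\bfX_1,k}$, where $h_k=\sum_i|\gamma^*\omega_i/\nu|\cdot 1_{B_m^{\bfX_1,k}\cap\gamma^{-1}(B_{m_1}^{\bfU_i,k})}$. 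This $h_k$ is smooth uniformly in $k$. It is positive exactly on $B_m^{\bfX_1,k}$, because every point of $\gamma(B_m^{\bfX_1,k})$ lies in some $B_{m_1}^{\bfU_i,k}$.

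Now define $f_k:=\gamma_*(h_k)^{-1}$ on $\gamma(B_m^{\bfX_1,k})$ and $0$ elsewhere. It satisfies $f_k\,\mu_{m_1}^{\bfX_2,k}=\gamma_*(\mu_m^{\bfX_1,k})$, and \Cref{lem:ext.by.0.sm} transfers its smoothness to $\bfX_2$. Since no chart of $\bfX_2$ is singled out, you need neither the both-simple reduction nor any refinement of the cover of $\bfX_1$.
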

\begin{proof}$ $
    \begin{enumerate}[{Case} 1.]
    \item The \RamiB{$\mu$-rectification} of $\bfX_1$ is simple:
    Let $\bfX_2=\bigcup\bfU_i$ be the cover of $\bfX_2$. Let $\omega_i$ be the form on $\bfU_i$ and $\nu$ be the form on $\bfX_1$. Let $g_i=\frac{\gamma^*(\omega_i)}{\nu}\in O^\times(\bfX_1)$.
    Fix $m\in \N$.     
    By \Cref{lem:cont.bnd}\eqref{lem:cont.bnd:S1} there is $m_1>m$ such that  for any $k\in \N$ we have $\gamma(B_m^{\bfX_1,k})\subset \gamma(B_{m_1}^{\bfX_2,k})$. 
    \Rami{
    For every $k\in \N$ and every $i$, denote 
    $$f_{i,k}:=|(g_i)_{\F_{\ell^k}((t))}|\cdot 1_{B_{m}^{\bfX_1,k}\cap \gamma^{-1}(B_{m_1}^{\bfU_i,k})}\in C_c^\infty(B_\infty^{\bfX_1,k}).$$
    Then we have $$(\mu_{m_1}^{\bfU_i,k})1_{B_m^{\bfX_1,k}}=f_{i,k} \mu_m^{\bfX_1,k}.$$
    } 
    From the previous lemma (\Cref{lem:poly:sm}), there is $m_2>m_1$ such that for any $k\in \N$ the function $f_{i,k}$ is $m_2$-smooth.    
    
    Let $h_{k}=\sum_i f_{i,k}$. We obtain:
    \begin{itemize}
        \item $\gamma^*(\mu_{m_1}^{\bfX_2,k})1_{B_m^{\bfX_1,k}}=h_{k} \mu_m^{\bfX_1,k}.$
        \item $h_{k}$ is $m_2$-smooth.    
    \end{itemize}
Thus,
$$(\mu_{m_1}^{\bfX_2,k})1_{\gamma(B_m^{\bfX_1,k})}=(\mu_{m_1}^{\bfX_2,k})\gamma_*(1_{B_m^{\bfX_1,k}})=\gamma_*(\gamma^*(\mu_{m_1}^{\bfX_2,k})1_{B_m^{\bfX_1,k}})=\gamma_*(h_{k} \mu_m^{\bfX_1,k})=\gamma_*(h_{k})\gamma_*( \mu_m^{\bfX_1,k}).$$
    Note that 
    \begin{align*}        
    \supp(h_k)&=\bigcup \supp(f_{i,k})=\bigcup (B_{m}^{\bfX_1,k}\cap \gamma^{-1}(B_{m_1}^{\bfU_i,k}))=B_{m}^{\bfX_1,k} \cap \bigcup \gamma^{-1}(B_{m_1}^{\bfU_i,k})=
    \\&=
    B_{m}^{\bfX_1,k} \cap \gamma^{-1}(B_{m_1}^{\bfX_2,k})=B_{m}^{\bfX_1,k} \end{align*}

    Let $$f_k(x)=
    \begin{cases}
			\gamma_*(h_k)^{-1}(x), & \text{if $x\in \supp(\gamma_*(h_k))$ }\\
            0, & \text{otherwise}
		 \end{cases}$$
         We get 
$$(\mu_{m_1}^{\bfX_2,k})f_k=(\mu_{m_1}^{\bfX_2,k})
1_{\gamma(B_m^{\bfX_1,k})}f_k=f_k\gamma_*(h_{k})\gamma_*( \mu_m^{\bfX_1,k})=1_{\gamma(B_m^{\bfX_1,k})}\gamma_*( \mu_m^{\bfX_1,k})=\gamma_*( \mu_m^{\bfX_1,k}).$$
So it remains to show that there is $m'$ such that for any $k$ the function $f_k$ is an $m'$-smooth function on $B^{\bfX_2,k}_\infty$. This follows from the above using \Cref{lem:ext.by.0.sm}.
         
    \item the general case

    Follows from the previous case.
\end{enumerate}    

\end{proof}

\begin{lem}\label{lem:push.fun}
    Let $\gamma:\bfX_1 \to \bfX_2$ be an \'etale morphism of rectified varieties. Then for any $m\in \N$ there is  $m' \in \N$ such that for any      
    $k\in \N$ and any $m$-smooth function $g\in C^\infty_c(B_{m}^{\bfX_1,k})$,     
    the function \Rami{ $f:=\gamma_*(g)$ on $B_{\infty}^{\bfX_2,k}$ defined by $$f(y)=\sum_{x\in (\gamma^{-1}(y))(\F_{\ell^k}((t)))\cap B_{m}^{\bfX_1,k}}g(x)$$ is $m'$-smooth.}
\end{lem}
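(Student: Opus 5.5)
We need $m'$ such that $f=\gamma_*(g)$ is constant on every ball $B_{-m'}^{\bfX_2,k}(y)$. By \Cref{lem:sm.crit}\eqref{lem:sm.crit:2} it suffices to check this for $y\in\supp(f)\subset\gamma(B_m^{\bfX_1,k})$. We argue symmetrically: for $y$ in the support and $y'\in B_{-m'}^{\bfX_2,k}(y)$, we build a bijection between the relevant preimages of $y$ and of $y'$ that preserves $g$. Since the statement concerns only balls, \Cref{lem:indep-rect} lets us choose convenient rectifications, as in earlier proofs.

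We fix the constants as follows.
\begin{itemize}
\item By \Cref{lem:cont.bnd}\eqref{lem:cont.bnd:S1}, choose $m_1>m$ with $\gamma(B_m^{\bfX_1,k})\subset B_{m_1}^{\bfX_2,k}$.
\item By \Cref{lem:et.mono}, choose $m_2>m$ such that $\gamma$ is injective on $B_{-m_2}^{\bfX_1,k}(x)$ for every $x\in B_{m+1}^{\bfX_1,k}$.
\item By \Cref{lem:cont.bnd}\eqref{lem:cont.bnd:S2}, choose $m_3$ such that for all $x\in B_{m}^{\bfX_1,k}$, the ball $B_{-m_3}^{\bfX_2,k}(\gamma(x))$ is related to small balls upstairs: we want $B_{-m_3}$-closeness in $\bfX_2$ to be matched by closeness in $\bfX_1$.
\item By \Cref{lem:eff.imp} (an étale map is smooth), choose $m'>m_3$ with $\gamma(B_{-m_4}^{\bfX_1,k}(x))\supset B_{-m'}^{\bfX_2,k}(\gamma(x))$ for all $x\in B_m^{\bfX_1,k}$, where $m_4:=\max(m,m_2)$.
\end{itemize}
Working with a simple rectification of $\bfX_1$ (via \Cref{lem:tri}\eqref{lem:tri:ultra}) makes the balls $B_{-m_4}$ genuine ultrametric balls.

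The argument then runs as follows. Let $y\in\supp(f)$ and $y'\in B_{-m'}^{\bfX_2,k}(y)$. For each $x\in\gamma^{-1}(y)\cap B_m^{\bfX_1,k}$, the choice of $m'$ gives $x'\in B_{-m_4}^{\bfX_1,k}(x)$ with $\gamma(x')=y'$. Then:
\begin{itemize}
\item $x'\in B_m^{\bfX_1,k}$ by \Cref{lem:tri}\eqref{lem:tri:mon}, and $g(x')=g(x)$ by $m$-smoothness of $g$.
\item By injectivity of $\gamma$ on $B_{-m_2}^{\bfX_1,k}(x)$, this $x'$ is the unique preimage of $y'$ in that ball.
\item Distinct $x$ give distinct $x'$. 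If $x_1\ne x_2$ gave the same $x'$, then $x_2\in B_{-m_4}^{\bfX_1,k}(x_1)$ by ultrametricity. Since $\gamma(x_1)=\gamma(x_2)=y$, this contradicts injectivity.
\end{itemize}
So we get an injection from $\gamma^{-1}(y)\cap B_m^{\bfX_1,k}$ into $\gamma^{-1}(y')\cap B_m^{\bfX_1,k}$ preserving $g$.

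For the reverse injection, apply the same construction with the roles of $y$ and $y'$ swapped. This requires $y'$ to play the role of $y$ as well, so we need $y'$ to lie in the image of $B_m$ and the constants to work for preimages of $y'$ in $B_m^{\bfX_1,k}$. This is where the choice of $m'$ must be made uniformly: we choose $m'$ for all points $x\in B_m^{\bfX_1,k}$ at once, which the results already give uniformly in $k$. With both injections, $f(y)=f(y')$ as a sum over bijective index sets with equal terms. If $y'$ has preimages in $B_m^{\bfX_1,k}$ but $y$ does not, the reverse injection shows this cannot happen.

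The main obstacle is the bookkeeping of balls when the rectification of $\bfX_1$ is not simple, because then balls are unions over charts and ultrametricity (\Cref{lem:tri}\eqref{lem:tri:ultra}) fails. The first thing to try is to reduce to a simple rectification using \Cref{lem:indep-rect}: replace $m$-smoothness for one rectification by $\tilde m$-smoothness for another, and replace the domain $B_m$ by a slightly larger ball, paying with a larger $m'$. If this is awkward, we instead cover $\bfX_1$ by affine opens, write $g$ as a sum of $g$ times indicators of chart balls (these pieces are $\tilde m$-smooth by \Cref{lem:poly:sm}-type arguments), and push forward each piece separately.
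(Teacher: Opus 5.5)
Your argument is sound when $\bfX_1$ carries a simple rectification, which covers the only use of this lemma (Case 2 of \Cref{lem:push.mes.sm}). The general case, however, has a gap exactly where you located the obstacle, and your proposed repair does not work.

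You derive the distinctness of the $x'$ from \Cref{lem:tri}\eqref{lem:tri:ultra}, which needs a simple rectification, and you suggest reducing to that case via \Cref{lem:indep-rect}. But a simple rectification is a single closed embedding of all of $\bfX_1$ into an affine space, so it exists only if $\bfX_1$ is affine. For non-affine $\bfX_1$ there is nothing to reduce to, and your second fallback (cutting $g$ along chart balls and proving the pieces uniformly smooth) is only gestured at.

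The missing observation is that ultrametricity is unnecessary. Suppose $x'\in B_{-m_4}^{\bfX_1,k}(x_1)\cap B_{-m_4}^{\bfX_1,k}(x_2)$. Then:
\begin{itemize}
\item $x'\in B_m^{\bfX_1,k}$ by \Cref{lem:tri}\eqref{lem:tri:mon};
\item by \Cref{lem:tri}\eqref{lem:tri:sym}, both $x_1$ and $x_2$ lie in $B_{-m_4}^{\bfX_1,k}(x')\subset B_{-m_2}^{\bfX_1,k}(x')$;
\item $\gamma$ is injective on that ball by your own choice of $m_2$, so $\gamma(x_1)=\gamma(x_2)$ forces $x_1=x_2$.
\end{itemize}
This is precisely how the paper obtains the disjointness of the balls $B_{-m_1}^{\bfX_1,k}(x_i)$, and it holds for every rectification.

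Likewise, your reverse injection needs $y\in B_{-m'}^{\bfX_2,k}(y')$. That is \Cref{lem:tri}\eqref{lem:tri:sym} in $\bfX_2$, valid for any rectification, and you should cite it rather than hide it in ``swap the roles''. Your constants $m_1$ and $m_3$ are never used, and the defining property of $m_3$ is not even well posed; drop them.

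With that fix, your route is a symmetric repackaging of the paper's. The paper reduces to real non-negative $g$ and proves only the one-sided bound $f(y')\ge f(y)$ for every $y$ and every $y'\in B_{-m'}^{\bfX_2,k}(y)$. This bound is trivial when $y$ has no preimage in $B_m^{\bfX_1,k}$. It then concludes with the min-criterion \Cref{lem:sm.crit}\eqref{lem:sm.crit:1}, whose proof is the same symmetry you use for the reverse injection. Your $g$-preserving bijection gives $f(y)=f(y')$ directly and needs no positivity reduction, at the cost of constructing and checking both maps.
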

\begin{proof}
\Rami{WLOG assume that $g$ is real and non-negatively valued.}
    We will use the \RamiD{criterion for $m$-smoothness (\Cref{lem:sm.crit}\eqref{lem:sm.crit:1}}). 
    Fix $m$.
   
    By \Cref{lem:et.mono} there is $m_1>m$ such that
     \begin{equation}\label{lem:push.fun:eq1}
         \forall k\in \N, x\in B^{\bfX_1,k}_m \text{ the map } \gamma|_{B^{\bfX_1,k}_{-m_1}(x)} \text{ is a monomorphism}
     \end{equation}

    By \Cref{lem:eff.imp} there exists $m'>m_1$ such that 
    for any $k$ and any $x\in B^{\bfX_1,k}_m$ we have 
    \begin{equation}\label{lem:push.fun:1}            
    \gamma(B^{\bfX,k}_{-m_1}(x))\supset B^{\bfY,k}_{-m'}(\gamma(x)).
    \end{equation}

    \Rami{Fix $k\in \N$. For} $y\in B_{\infty}^{\bfX_2,k}$, 
    \Rami{denote 
    $$\fF_y:= (\gamma^{-1}(y))(\F_{\ell^k}((t))).$$
    Fix $y\in B_{\infty}^{\bfX_2,k}$, and denote}    
    $$\{x_1,\dots,x_N\}:=\Rami{\fF_y}\cap B_{m}^{\bfX_1,k}.$$
    By \eqref{lem:push.fun:eq1} and \Cref{lem:tri}\eqref{lem:tri:sym}, \Rami{for every $i,j\in \{1,\dots, N\}$, we have:}
        \begin{equation}\label{lem:push.fun:2}       B_{-m_1}^{\bfX_1,k}(x_i)\cap B_{-m_1}^{\bfX_1,k}(x_j)=\emptyset.
\end{equation}
        
    Let $y'\in B_{-m'}^{\bfX_2,k}(y)$. 
    We \Rami{obtain 
    \begin{align*}        
    \gamma_*(g)(y')&=\sum_{x\in \fF_{y'}}g(x)\overset{\eqref{lem:push.fun:1}}{\geq}\sum_{i=1}^{N} \sum_{x\in \fF_{y'}\cap B_{-m_1}^{\bfX_1,k}(x_i)}g(x)\overset{\eqref{lem:push.fun:2}}{\geq}
    \\&\geq
    \sum_{i=1}^N \min(g(B_{-m_1}^{\bfX_1,k}(x_i)))=\sum_{i=1}^N g(x_i)=\gamma_*(g)(y).
    \end{align*}
    }
    \RamiD{By \Cref{lem:sm.crit}\eqref{lem:sm.crit:1}}  this implies the assertion.       
\end{proof}

\begin{thm}\label[thm]{lem:push.mes.sm}
    Let $\gamma:\bfX_1 \to \bfX_2$ be a  smooth map  of  \RamiB{$\mu$-rectified} varieties. Then for any $m\in \N$ there is  $m' \in \N$ such that for any $k\in \N$ and any $m$-smooth function $g\in C^\infty_\RamiD{c}(B_{\infty}^{\bfX_1,k})$ there is an $m'$-smooth function $f\in C^\infty_c(B_{m'}^{\bfX_2,k})$ such that:
    $$\gamma_*(g\mu_m^{\bfX_1,k})=f\cdot \mu_{m'}^{\bfX_2,k}.$$
\end{thm}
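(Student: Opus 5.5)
We follow the same case-by-case reduction scheme used for \Cref{lem.sub.bnd} and \Cref{lem:bnd.on.sup}. We factor $\gamma$ locally as an \'etale map followed by a projection from a product with an affine space. We treat each factor separately and then compose. Two preliminary remarks make this possible.

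First, by \Cref{lem:indep-rect}, \Cref{lem:sm.mes.def} and \Cref{lem:ext.by.0.sm}, the statement is insensitive to the choice of $\mu$-rectifications and to passing to open subsets. Indeed, changing the $\mu$-rectification on the target replaces $\mu_{m'}^{\bfX_2,k}$ by $h_k\mu_{m''}^{\bfX_2,k}$ with $h_k$ being $m''$-smooth, and products of smooth functions stay smooth after enlarging the index.

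Second, the statement is stable under composition. Suppose $\gamma=\gamma_1\circ\gamma_2$, where $\gamma_2$ gives $m\mapsto m_1$ and $\gamma_1$ gives $m_1\mapsto m'$. Then $(\gamma_1)_*((\gamma_2)_*(g\mu_m))=(\gamma_1)_*(f_1\mu_{m_1})$ with $f_1$ $m_1$-smooth, and the second step applies. This is exactly why the theorem is stated for arbitrary $m$-smooth $g$ rather than just for $g=1$.

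Finally, since $g\mu_m^{\bfX_1,k}=\sum_\alpha g1_{i_\alpha^{-1}(\dots)}|\omega_\alpha|$ and each $1_{i_\alpha^{-1}(t^{-m}\F_{\ell^k}[[t]]^M)}$ is $m$-smooth on the chart, we may reduce to a simple $\mu$-rectification on $\bfX_1$. Using \cite[Tag 039P]{SP}, we then cover $\bfX_1$ by finitely many opens on which $\gamma$ factors as $\bfU\xrightarrow{\gamma_2}\bfX_2\times\A^j\xrightarrow{\gamma_1}\bfX_2$, with $\gamma_2$ \'etale. Sums over a finite cover are harmless.

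\textbf{\'Etale case.} Choose $\mu$-rectifications so that the form on $\bfX_1$ is the pullback of the form $\omega$ on $\bfX_2$; the discrepancy function is an $m_2$-smooth multiplier by \Cref{lem:poly:sm}. Then $\gamma_*(g\mu_m^{\bfX_1,k})=\gamma_*(g1_{B_m^{\bfX_1,k}})\,|\omega|$, where $\gamma_*$ denotes fiber summation as in \Cref{lem:push.fun}. That lemma gives $m'$-smoothness of the density. By \Cref{lem:cont.bnd} its support lies in $B_{m'}^{\bfX_2,k}$ for $m'$ large, so $|\omega|=\mu_{m'}^{\bfX_2,k}$ there.

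\textbf{Projection case.} Here $\bfX_1=\bfX_2\times\A^j$ with the product $\mu$-rectification. The density is $f(y)=\int_{\F_{\ell^k}((t))^j} g(y,v)\,dv$. Its support lies in $B_{m'}^{\bfX_2,k}$ by \Cref{lem:cont.bnd}. We need $f$ to be constant on $B_{-m'}^{\bfX_2,k}(y)$. Suppose $y'\in B_{-m}^{\bfX_2,k}(y)$ in the simple rectification. Then $(y',v)\in B_{-m}^{\bfX_1,k}((y,v))$ for every $v$, so $g(y',v)=g(y,v)$ and hence $f(y')=f(y)$.

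\textbf{Expected obstacle.} The hard step is the bookkeeping in the \'etale case, specifically the transfer of smoothness between different rectifications of source and target. Here \Cref{lem:ext.by.0.sm} and \Cref{lem:tri} are used to show that $m$-smoothness relative to one rectification implies $m''$-smoothness relative to another, uniformly in $k$. For this we would first prove a small lemma: for two rectifications of the same variety and any $m$, there is $m''$ such that every function supported in $B_m$ that is $m$-smooth for the first rectification is $m''$-smooth for the second. This follows from \Cref{lem:indep-rect}(i) together with \Cref{lem:sm.crit}\eqref{lem:sm.crit:2}.
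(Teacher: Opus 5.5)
Your proposal is correct and follows essentially the paper's own proof. Both arguments reduce via rectification-independence and open embeddings (\Cref{lem:sm.mes.def}, \Cref{lem:ext.by.0.sm}), treat the \'etale case by fiber summation (\Cref{lem:push.fun}) and the product projection by the slice argument, and glue using the local structure of smooth maps and stability under composition.

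One small slip: the auxiliary change-of-rectification lemma you plan to prove is already the case $\bfU=\bfX$ of \Cref{lem:ext.by.0.sm}. Moreover, its small-ball comparison comes from \Cref{lem:cont.bnd}\eqref{lem:cont.bnd:S2} applied to the identity map, not from \Cref{lem:indep-rect}\eqref{lem:indep-rect:1}, which only concerns large balls.
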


\begin{proof}
$ $
\begin{enumerate}[{Case} 1.]    
    \item\label{lem:push.mes.sm:c1} $\gamma$ is an open embedding:\\
    Follows from \Cref{lem:ext.by.0.sm}  and \Cref{lem:sm.mes.def}.
\item\label{lem:push.mes.sm:c2} $\gamma$ is \'etale, both rectifications are simple and the form on $\bfX_1$ is the pullback of the form on $\bfX_2$:\\
Follows from \Cref{lem:push.fun}.
\item\label{lem:push.mes.sm:c3} $\gamma$ is \'etale, and both rectifications are simple:\\
Follows from the previous 2 cases.
\item\label{lem:push.mes.sm:c4} $\gamma$ is \'etale, and the rectification of $\bfX_2$ is simple:\\
Follows from the previous case.
\item\label{lem:push.mes.sm:c5} $\gamma$ is \'etale and can be written as a composition: $\bfX_1\to\bfX_3\to \bfX_2$ where  the rectification of $\bfX_3$ is simple and $\bfX_3\to \bfX_2$ is an open embedding:\\
Follows from the previous case and Case \ref{lem:push.mes.sm:c1}.
\item\label{lem:push.mes.sm:c6} $\gamma$ is \'etale and the cover of $\bfX_1$ is obtained of covers of the preimages of the covering sets on $\bfX_2$:\\
Follows from the previous case.
\item\label{lem:push.mes.sm:c7} $\gamma$ is \'etale:\\
Follows from the previous case  and Case \ref{lem:push.mes.sm:c1}.
\item\label{lem:push.mes.sm:c8} $\bfX_1=\bfX_2\times \A^n$, $\gamma$ is the projection, the rectifications on $\bfX_i$ are simple, and the rectification of $\bfX_1$ is obtained from the rectification of $\bfX_2$ in the natural way:\\
In this case we can take $m'=m$. \Rami{The assertion} follows from the fact that for 2 points $x_1,x_2\in B_{\infty}^{\bfX_2,k}$ with $x_2\in B_{m}^{\bfX_2,k}(x_1)$ we have $j_1^*(g)=j_2^*(g)$  for any $m$-smooth $g\in C^\infty_\RamiD{c}(B_{\infty}^{\bfX_\Rami{1},k})$, where  $j_i:\A^n\to \bfX_\Rami{1}$ are given by $j_i(a)=(x_i,a)$. 
\item\label{lem:push.mes.sm:c9} $\bfX_1=\bfX_2\times \A^n$, $\gamma$ is the projection, and $\bfX_2$ is affine:\\
Follows from the previous case and Case 1.
\item\label{lem:push.mes.sm:c10} The general case:\\
Follows from the previous case Case \ref{lem:push.mes.sm:c7}, Case \ref{lem:push.mes.sm:c1} and the structure theorem for smooth maps 
(\cite[\href{https://stacks.math.columbia.edu/tag/039Q}{Theorem 039Q}]{SP}).
\end{enumerate}    
\end{proof}

\section{Effectively surjective maps}\label{sec:sur}
In this section we introduced a version of surjectivity of a map between algebraic varieties.
We complete the proof of \Cref{thm: Eff. bounds on push of smooth measures},
and prove \Cref{thm: crit. eff. surj.} - a criterion for effective surjectivity (See \Cref{prop:crit-effsurj} below).

\begin{defn} \label[defn]{def:effsur}
Let $\gamma:\bfX\to \bfY$ be a map of rectified varieties. We say that $\gamma$ is effectively surjective\index{effectively surjective} iff  for any $m \Rami{\in \N}$ there is $\Rami{m'}  \in \N $ such that for every $k\in \N$ we have $$\gamma(B^{\bfX,k}_{m'})\supset B^{\bfY,k}_m.$$
\end{defn}
From \Cref{lem:indep-rect} 
we obtain:
\begin{lemma}
    The property of a map $\gamma:\bfX\to \bfY$ being effectively surjective does not depend of the rectifications of the varieties $\bfX$ and $\bfY$.
\end{lemma}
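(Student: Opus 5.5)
The plan is to compare balls for two rectifications of the same variety and feed that comparison into \Cref{def:effsur}. Let $\bfX_1,\bfX_2$ be two copies of $\bfX$ with (possibly different) rectifications, and $\bfY_1,\bfY_2$ two copies of $\bfY$. Assume $\gamma:\bfX_1\to\bfY_1$ is effectively surjective; we show that $\gamma:\bfX_2\to\bfY_2$ is effectively surjective. By symmetry this proves the lemma. The only input needed is \Cref{lem:indep-rect}\eqref{lem:indep-rect:1a}, used in both directions, together with the monotonicity $B^{\bfX,k}_{a}\subset B^{\bfX,k}_{b}$ for $a\le b$, which is immediate from the definition.

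Fix $m\in\N$. The argument is a chain of three inclusions, with all constants independent of $k$.
\begin{enumerate}
\item Apply \Cref{lem:indep-rect}\eqref{lem:indep-rect:1a} to the pair $(\bfY_2,\bfY_1)$. This gives $m_1$ such that $B^{\bfY_2,k}_m\subset B^{\bfY_1,k}_{m_1}$ for all $k$.
\item Use effective surjectivity of $\gamma:\bfX_1\to\bfY_1$ for $m_1$. This gives $m_2$ such that $\gamma(B^{\bfX_1,k}_{m_2})\supset B^{\bfY_1,k}_{m_1}$ for all $k$.
\item Apply \Cref{lem:indep-rect}\eqref{lem:indep-rect:1a} to the pair $(\bfX_1,\bfX_2)$. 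This gives $m'$ such that $B^{\bfX_1,k}_{m_2}\subset B^{\bfX_2,k}_{m'}$ for all $k$.
\end{enumerate}
Chaining these, for every $k$ we get
$$\gamma(B^{\bfX_2,k}_{m'})\supset \gamma(B^{\bfX_1,k}_{m_2})\supset B^{\bfY_1,k}_{m_1}\supset B^{\bfY_2,k}_{m},$$
which is exactly the definition for the second pair of rectifications.

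There is no real obstacle. The one point to check is that each constant $m_1,m_2,m'$ depends only on $m$ and the rectifications, never on $k$. This holds because \Cref{lem:indep-rect}\eqref{lem:indep-rect:1a} and the definition of effective surjectivity both quantify as ``there is $m'$ such that for any $k$''.
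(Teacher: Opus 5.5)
Your proof is correct and is the same approach as the paper's. The paper simply states that the lemma follows from \Cref{lem:indep-rect}, and your chain $\gamma(B^{\bfX_2,k}_{m'})\supset \gamma(B^{\bfX_1,k}_{m_2})\supset B^{\bfY_1,k}_{m_1}\supset B^{\bfY_2,k}_{m}$, with constants independent of $k$, is exactly what that remark amounts to when written out.
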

\begin{thm}\label[thm]{prop:crit-effsurj}
        Let $\gamma:\bfX\to \bfY$ be a smooth map of algebraic varieties that is  onto on the level of points over any  field. Then $\gamma$ is effectively surjective.
\end{thm}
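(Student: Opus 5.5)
The plan follows the outline in the introduction, by Noetherian induction on $\bfY$. Since effective surjectivity does not depend on the rectifications, we choose them freely.

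\emph{Step 1 (stratification with sections).} We show that $\bfY$ admits a finite stratification $\bfY=\bigsqcup_j \bfS_j$ by locally closed smooth subvarieties such that each base change $\gamma^{-1}(\bfS_j)\to \bfS_j$ admits a section over $\F_\ell$. For this we take a generic point $\eta$ of an irreducible component of $\bfY$. By hypothesis the fiber $\gamma^{-1}(\eta)$ has a point over the residue field $\F_\ell(\eta)$. Spreading out this point gives a dense open $\bfV\subset\bfY$, which we may take smooth, together with a section $s:\bfV\to\gamma^{-1}(\bfV)$. We then recurse on $\bfY\smallsetminus \bfV$, using that surjectivity on points over every field is inherited by base change to closed subvarieties.

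\emph{Step 2 (sections give effective surjectivity).} If $s$ is a section over $\bfS$, then \Cref{lem:cont.bnd}\eqref{lem:cont.bnd:S1}, applied to $s$, gives $s(B^{\bfS,k}_m)\subset B^{\gamma^{-1}(\bfS),k}_{m'}$. Combining this with \Cref{cor:cls.embd} and \Cref{lem:indep-rect} to compare the balls of $\gamma^{-1}(\bfS)$ with those of $\bfX$, we get that $\gamma|_{\gamma^{-1}(\bfS)}$ is effectively surjective onto $\bfS$.

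\emph{Step 3 (gluing closed and open pieces).} Let $\bfZ\subset\bfY$ be closed with complement $\bfU$, and assume that $\gamma$ is effectively surjective over $\bfZ$ and over $\bfU$, with the intrinsic rectification on $\bfU$. Fix $m$.
\begin{enumerate}
\item \Cref{lem:dev} gives $m_1$ with $B^{\bfY,k}_m\subset B^{\bfU,k}_{m_1}\cup B^{\bfY,k}_{-m_2}(\bfZ)$. Here $m_2$ is chosen later, and we apply \Cref{lem:dev} with $\max(m,m_2)$ in place of $m$.
\item Points in $B^{\bfU,k}_{m_1}$ have preimages in a bounded ball of $\gamma^{-1}(\bfU)$, hence in a bounded ball of $\bfX$ by \Cref{lem:indep-rect}.
\item Let $y\in B^{\bfY,k}_{-m_2}(z)$ with $z\in\bfZ$. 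Such $z$ can be taken in a bounded ball of $\bfZ$, using \Cref{lem:tri}, \Cref{cor:cls.embd} and the fact that $y\in B^{\bfY,k}_m$. Effective surjectivity over $\bfZ$ gives $x\in B^{\bfX,k}_{m_3}$ with $\gamma(x)=z$. \Cref{lem:eff.imp}, applied with radius $m_3$, gives $m_4$ with $\gamma(B^{\bfX,k}_{-m_3}(x))\supset B^{\bfY,k}_{-m_4}(z)$. We choose $m_2\ge m_4$. Then $y$ has a preimage in $B^{\bfX,k}_{-m_3}(x)\subset B^{\bfX,k}_{m_3}$, by \Cref{lem:tri}\eqref{lem:tri:mon}.
\end{enumerate}
Applying Step 3 inductively along the stratification of Step 1, with Step 2 as the base case, proves the theorem.

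\emph{Main obstacle.} The delicate point is the order of quantifiers in Step 3. The radius $m_2$ of the neighbourhood of $\bfZ$ must be fixed before invoking \Cref{lem:dev}, and it depends on $m_3$, which depends only on $m$ and the bound for $\bfZ$. Consequently the bound on $z$ must be obtained independently of $m_2$. This works because $y\in B^{\bfY,k}_m$ and $z$ is close to $y$, so \Cref{lem:tri}\eqref{lem:tri:mon} places $z$ in $B^{\bfY,k}_m$, and hence in a bounded ball of $\bfZ$ by \Cref{cor:cls.embd}.

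A secondary issue is that the strata are only locally closed. We handle this by running the induction on open unions of strata, taking at each stage $\bfY'$ open in $\bfY$ and $\bfZ$ closed in $\bfY'$, and applying \Cref{lem:dev} inside the rectified variety $\bfY'$. This requires $\bfY'$ to be smooth, so the strata should be chosen so that these pieces are smooth, or \Cref{lem:dev} applied on a smooth ambient variety.
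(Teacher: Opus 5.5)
Your proposal is correct and is essentially the paper's own proof. It has the same ingredients: the generic-point stratification with sections (\Cref{lem:onto-start}), sections giving effective surjectivity (\Cref{lem:sect-effsurj}), and the gluing step of \Cref{cor:glue-effsurj} built from \Cref{cor:cls.embd}, \Cref{lem:eff.imp} and \Cref{lem:dev} with exactly your order of quantifiers (the paper applies \Cref{lem:dev} at the radius $m_4\ge m$ coming from \Cref{lem:eff.imp}), followed by induction on the number of strata. Your worry about smoothness of $\bfY'$ is moot, since $\bfY$ carries a rectification and is therefore smooth, and $\bfY'$ is open in it.
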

For this we will need some preparations.
\begin{lemma}\label{lem:onto-start}
    Let $\gamma:\bfX\to \bfY$ be a map of  algebraic \RamiB{varieties.} 
    Assume that 
       $\gamma$ is onto on the level of points for any field \RamiB{(that contains $\F_\ell$)}.
       Then,
       $\bfY$ admits a stratification  such that $\gamma$ admits a section for each strata.
\end{lemma}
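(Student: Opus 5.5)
We argue by Noetherian induction on $\bfY$. It suffices to show that every nonempty closed subvariety $\bfW\subset\bfY$ contains a nonempty open subset $\bfW^\circ\subset\bfW$ over which $\gamma$ has a section, i.e.\ a morphism $s:\bfW^\circ\to\bfX$ with $\gamma\circ s$ equal to the inclusion $\bfW^\circ\hookrightarrow\bfY$. Given this, we start with $\bfW=\bfY$ and obtain $\bfW^\circ$. We then pass to the closed complement $\bfY\smallsetminus\bfW^\circ$, which has strictly smaller underlying space, and apply the claim to each of its irreducible components (suitably shrunk to be disjoint). Since $\bfY$ is Noetherian, this terminates after finitely many steps and yields a finite stratification by locally closed pieces, each carrying a section. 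The hypothesis is stable under base change, because points of $\gamma^{-1}(\bfW)\to\bfW$ over a field $K$ are exactly the $K$-points of $\bfX$ lying over $\bfW(K)$. So it is enough to treat the case $\bfW=\bfY$, and we may also assume $\bfY$ is irreducible, replacing it by a dense open subset of one irreducible component.

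For the key step, let $\eta$ be the generic point of $\bfY$ (with $\bfY$ reduced and irreducible) and let $K=\kappa(\eta)$ be its function field, which contains $\F_\ell$. The tautological point $\operatorname{Spec}K\to\bfY$ is a $K$-point of $\bfY$. By hypothesis, $\gamma$ is onto on $K$-points, so there is $x\in\bfX(K)$ with $\gamma(x)=\eta$, i.e.\ a morphism $\operatorname{Spec}K\to\bfX$ over $\bfY$. Since $\bfX$ is of finite type over $\F_\ell$, this morphism is given on an affine chart by finitely many elements of $K$. Each has a common denominator, so the morphism spreads out over some nonempty affine open $\bfW^\circ\subset\bfY$ on which these denominators are invertible; for this spreading-out we use the standard limit argument \cite[\href{https://stacks.math.columbia.edu/tag/01ZC}{Tag 01ZC}]{SP}, since $\operatorname{Spec}K=\varprojlim\bfW^\circ$. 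This gives $s:\bfW^\circ\to\bfX$. The identity $\gamma\circ s=\mathrm{incl}$ holds at the generic point, so it holds on a possibly smaller dense open, because $\bfY$ is reduced and the locus where two morphisms into a separated (or affine) target agree is closed and contains $\eta$. If $\bfX$ is not separated, we first restrict to an affine open of $\bfX$ containing the image of $x$, which makes the agreement locus closed.

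The main point requiring care is the interpretation of ``onto on the level of points over any field''. We need it for the non-algebraically-closed, non-finite function fields $\kappa(\eta)$ of the strata, and these are indeed fields containing $\F_\ell$, so the hypothesis as stated applies. The remaining issues are bookkeeping. First, the strata should be reduced subvarieties with the induced reduced structure, which is harmless because sections are only needed on the reduced strata. Second, the inductive step is applied to the restriction $\gamma^{-1}(\bfW)\to\bfW$, whose surjectivity on $K$-points for fields $K$ follows immediately from that of $\gamma$, as noted above.
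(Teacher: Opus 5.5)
Your proposal is correct and follows the paper's proof. Both arguments reduce by Noetherian induction to an irreducible $\bfY$, lift the generic point $\operatorname{Spec}K\to\bfY$ to $x\in\bfX(K)$, and clear the finitely many denominators of $O_\bfU(\bfU)\to K$ to get a section over $\bfV=\bfY_f$. One small slip: whether the agreement locus of $\gamma\circ s$ and the inclusion is closed depends on the target $\bfY$, not on $\bfX$. The fix is to shrink $\bfY$ to an affine open at the outset, as the paper does; then $\gamma\circ s$ automatically equals the inclusion, because $O_\bfY(\bfY)_f\to K$ is injective for integral $\bfY$.
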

\RamiB{
\begin{remark}
This is  a standard result which is valid over any field. For completeness, we include its proof here.     
\end{remark}
}
\begin{proof}
We will prove the statement
by Noetherian Induction. So it is enough to show that $\gamma$ admits a section on some nonempty open set $\bfV\subset \bfY$. Without loss of generality we  can assume that $\bfY$ is irreducible and affine.
\begin{enumerate}[{Case} 1:]
    \item $\dim(\bfY)=0$\\
    obvious.
    \item  $\dim(\bfY)>0$\\
Let $K$ be the field of rational functions on $\bfY$.  \Rami{By the assumption,}  $\gamma(K):\bfX(K)\to \bfY(K)$ is onto. We have a canonical point $y \in \bfY(K)$. This gives us a  point $x\in \bfX(K)$ such that $y=\gamma(x)$. So  we get a commutative diagram:
$$ 
\begin{tikzcd}
     & \bfX \dar["\gamma"] \\
    Spec(K)\rar["y"] \urar["x"]  & \bfY
\end{tikzcd}
$$ 
We have an affine open set $\bfU\subset \bfX$ such that $x\in \bfU(K)$. In other words we get a diagram
$$ 
\begin{tikzcd}
     \bfU\rar[""]& \bfX \dar["\gamma"] \\
    Spec(K)\rar["y"] \uar["x"]  & \bfY
\end{tikzcd}
$$ 
This gives us a map $O_\bfU(\bfU)\to K$. Since $O_\bfU(\bfU)$ is  finitely generated over $\F_\ell$, the image of this map lies inside $f^{-1}O_\bfY(\bfY)$ for some $f\in O_\bfY(\bfY)$. Let $\bfV:=\bfY_f$ be the non vanishing locus of $f$. We get a commutative diagram:
$$ 
\begin{tikzcd}
     \bfU\arrow["",rr]& & \bfX \arrow["\gamma",dd] \\
      &\bfV \arrow["",dr] \arrow["",ul] & \\
    Spec(K)\arrow["y",rr] \arrow["x",uu]  \arrow["",ur] & & \bfY
\end{tikzcd}
$$ 
This gives the requested section.
\end{enumerate}
\end{proof}

The following follows immediately from \Cref{lem:indep-rect}:
\begin{lemma}\label{lem:sect-effsurj}
    Let $\gamma:\bfX\to \bfY$ be a map of  algebraic varieties defined over $\F_\ell$ that admits  a section. Then $\gamma$ is effectively surjective.
\end{lemma}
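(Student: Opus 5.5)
Let $s:\bfY\to\bfX$ be a section, so $\gamma\circ s=\mathrm{id}_\bfY$. The plan is to bound the preimages of points in $B^{\bfY,k}_m$ by their images under $s$. Fix $m\in\N$. Applying \Cref{lem:cont.bnd}\eqref{lem:cont.bnd:S1} to the morphism $s:\bfY\to\bfX$ of rectified varieties gives $m'>m$ such that for every $k\in\N$
$$s(B^{\bfY,k}_m)\subset B^{\bfX,k}_{m'}.$$
Then for any $y\in B^{\bfY,k}_m$ the point $x:=s(y)$ lies in $B^{\bfX,k}_{m'}$ and satisfies $\gamma(x)=y$. Hence
$$B^{\bfY,k}_m=\gamma\left(s(B^{\bfY,k}_m)\right)\subset\gamma(B^{\bfX,k}_{m'}),$$
which is exactly \Cref{def:effsur}.

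This is why the statement ``follows immediately'': \Cref{lem:indep-rect} and \Cref{lem:cont.bnd} rest on the same uniform boundedness. Equivalently, one can give $\bfY$ the rectification pulled back along the closed embedding $s$ and compare it with the given one via \Cref{lem:indep-rect}\eqref{lem:indep-rect:1a}.

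The one point needing care is that $s$ must be an honest morphism of varieties defined over $\F_\ell$. Then $s$ induces maps on $\F_{\ell^k}((t))$-points for every $k$ simultaneously, and the bound $m'$ from \Cref{lem:cont.bnd} does not depend on $k$.

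In the intended application, the sections come from \Cref{lem:onto-start} and are only defined over strata. There the lemma is applied to $\gamma^{-1}(\bfS)\to\bfS$ for each stratum $\bfS$, with its own rectification, and the argument above goes through unchanged. I expect no real obstacle beyond this bookkeeping.
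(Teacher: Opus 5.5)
Your proof is correct and is essentially the paper's argument. The paper only says the lemma follows immediately from \Cref{lem:indep-rect}, whose part (i) is itself a direct consequence of \Cref{lem:cont.bnd}, and applying \Cref{lem:cont.bnd}\eqref{lem:cont.bnd:S1} to the section $s$ and then using $\gamma\circ s=\mathrm{id}$ is exactly that content. Your direct version is, if anything, slightly cleaner: unlike the pulled-back-rectification route you mention as an aside, it does not need $s$ to be a closed embedding, which would require $\gamma$ to be separated.
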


\begin{cor}\label[cor]{cor:glue-effsurj}
    Let $\gamma:\bfX\to \bfY$ be a smooth map of  algebraic varieties defined over $\bF_\ell$.
    Let $\bfU\subset \bfY$ be open and $\bfZ:=\bfY\smallsetminus \bfU$. Assume that $\bfZ$ is smooth. Assume also that $\gamma|_{\gamma^{-1}(\bfZ)}:\gamma^{-1}(\bfZ)\to \bfZ$ and $\gamma|_{\gamma^{-1}(\bfU)}:\gamma^{-1}(\bfU)\to \bfU$ are effectively surjective. Then so is $\gamma$.    
\end{cor}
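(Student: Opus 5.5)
Fix $m$. The plan is to split $B^{\bfY,k}_m$ via \Cref{lem:dev} into a part lying in a large ball of $\bfU$ and a part lying in a small tubular neighborhood of $\bfZ$. The first part is handled by effective surjectivity over $\bfU$. The second part is handled by lifting a nearby point of $\bfZ$ and then using the effective open mapping theorem (\Cref{lem:eff.imp}) to move from the fiber over $\bfZ$ to the fiber over the given point. Throughout, rectifications may be chosen freely, since effective surjectivity is independent of them (\Cref{lem:indep-rect}). In particular we give $\gamma^{-1}(\bfZ)$ and $\bfZ$ the rectifications induced as closed subvarieties of $\bfX$ and $\bfY$. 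Then, by \Cref{cor:cls.embd} and \Cref{lem:indep-rect}, balls in $\gamma^{-1}(\bfZ)$ sit inside balls of $\bfX$ of controlled radius, and balls of $\bfY$ intersected with $\bfZ$ sit inside balls of $\bfZ$ of controlled radius.

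The key steps, in order, are as follows.
\begin{enumerate}
\item Choose $m_1$ with $B^{\bfY,k}_m\cap \bfZ \subset B^{\bfZ,k}_{m_1}$ for all $k$.
\item By effective surjectivity of $\gamma|_{\gamma^{-1}(\bfZ)}$, choose $m_2$ with $B^{\bfZ,k}_{m_1}\subset \gamma(B^{\gamma^{-1}(\bfZ),k}_{m_2})$, and then $m_3$ with $B^{\gamma^{-1}(\bfZ),k}_{m_2}\subset B^{\bfX,k}_{m_3}$.
\item Apply \Cref{lem:eff.imp} to the smooth map $\gamma$ with radius $m_3$, giving $m_4>m_3$ such that $\gamma(B^{\bfX,k}_{-m_3}(x))\supset B^{\bfY,k}_{-m_4}(\gamma(x))$ for every $x\in B^{\bfX,k}_{m_3}$.
\item Apply \Cref{lem:dev} to $\bfY$, $\bfU$ with radius $\max(m,m_4)$, giving $m_5$ with $B^{\bfY,k}_m\subset B^{\bfU,k}_{m_5}\cup B^{\bfY,k}_{-m_4}(\bfZ)$.
\item By effective surjectivity of $\gamma|_{\gamma^{-1}(\bfU)}$, choose $m_6$ with $B^{\bfU,k}_{m_5}\subset \gamma(B^{\gamma^{-1}(\bfU),k}_{m_6})$, and by \Cref{lem:cont.bnd} (applied to the open embedding) choose $m_7$ with $B^{\gamma^{-1}(\bfU),k}_{m_6}\subset B^{\bfX,k}_{m_7}$.
\end{enumerate}
Finally take $m'=\max(m_7,m_3+1)$.

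For a point $y$ in the tube part, write $y\in B^{\bfY,k}_{-m_4}(z)$ with $z\in \bfZ(\F_{\ell^k}((t)))$. Lift $z$ to $x\in \gamma^{-1}(\bfZ)$ lying in $B^{\bfX,k}_{m_3}$, and then find $x'\in B^{\bfX,k}_{-m_3}(x)$ with $\gamma(x')=y$. By \Cref{lem:tri}, $x'$ lies in $B^{\bfX,k}_{m_3}\subset B^{\bfX,k}_{m'}$.

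The main obstacle is making the lift of $z$ work: we need $z$ itself to lie in a bounded ball of $\bfZ$, whereas a priori $B^{\bfY,k}_{-m_4}(\bfZ)$ is a union over all $z\in \bfZ(\F_{\ell^k}((t)))$. The fix is that $y\in B^{\bfY,k}_m$ together with $y\in B^{\bfY,k}_{-m_4}(z)$ forces $z\in B^{\bfY,k}_{m}$, via \Cref{lem:tri}\eqref{lem:tri:sym} and \eqref{lem:tri:mon} (using $m_4\ge 1$). Hence $z\in B^{\bfY,k}_m\cap\bfZ\subset B^{\bfZ,k}_{m_1}$, which is exactly what step 1 handles. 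A minor point is that the first inclusion of \Cref{lem:dev} is stated with the same $m$ on both sides. We therefore apply it with $\max(m,m_4)$ and shrink the tube using monotonicity of centered balls in the radius; for a non-simple rectification this monotonicity holds chartwise, so it needs only a one-line check.
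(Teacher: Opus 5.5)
Your proposal is correct and is essentially the paper's own proof. It uses the same chain of constants: comparison of balls on $\bfZ$ via \Cref{cor:cls.embd}, effective surjectivity over $\bfZ$, \Cref{lem:eff.imp} at radius $m_3$, \Cref{lem:dev}, and effective surjectivity over $\bfU$, with the nearby point $z$ forced into $B^{\bfY,k}_m$ by \Cref{lem:tri} exactly as the paper does. The differences are cosmetic: the paper applies \Cref{lem:dev} directly at radius $m_4$, which already exceeds $m$, and it fixes a rectification of $\gamma^{-1}(\bfU)$ whose balls lie inside those of $\bfX$ instead of introducing your extra constant $m_7$. Like the paper, you rely on \Cref{lem:tri}\eqref{lem:tri:mon} for rectifications that need not be simple, and that step is only evident when the centered ball in question lies in a single chart.
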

\begin{proof}
    Fix an integer $m$. Choose a rectification of $\gamma^{-1}(\bfU)$ \RamiD{which is compatible with the rectification on $\bfX$ in the following sense: For any $m\in \Z, k\in \N$ we have
    \begin{itemize}
        \item $B_m^{\gamma^{-1}(\bfU),k}\subset B_m^{\bfX,k}$.
        \item For any $x\in B_\infty^{\bfX,k}$ we have $B_{-m}^{\gamma^{-1}(\bfU),k}(x)\subset B_{-m}^{\bfX,k}(x)$.
    \end{itemize}    
    } 
    By \Cref{cor:cls.embd} there exists $m_1>m$  such that for any $k$ we have 
    \begin{equation}\label{cor:glue-effsurj:eq1}
    B^{\bfZ,k}_{m_1} \supset  B^{\bfY,k}_m\cap \RamiD{B^{\bfZ,k}_\infty}.
    \end{equation}
    By the assumption there exists $m_2>m_1$  such that for any $k\in \N$ we have 
    \begin{equation}\label{cor:glue-effsurj:eq2}
    \gamma(B^{\gamma^{-1}(\bfZ),k}_{m_2})\supset B^{\bfZ,k}_{m_1}.
    \end{equation}
    By the \Cref{lem:cont.bnd} there exists $m_3>m_2$  such that for any $k$ we have 
    \begin{equation}\label{cor:glue-effsurj:eq3}
    B^{\bfX,k}_{m_3}\supset B^{\gamma^{-1}(\bfZ),k}_{m_2}.
    \end{equation}
    By the \Cref{lem:eff.imp} there exists $m_4>m_3$  such that for any $k$ and for any $x\in B^{\bfX,k}_{m_3}$ we have 
    \begin{equation}\label{cor:glue-effsurj:eq4}
    \gamma(B^{\bfX,k}_{-m_3}(x))\supset B^{\bfY,k}_{-m_4}(\gamma(x)).
    \end{equation}
    By the \Cref{lem:dev} there exists $m_5>m_4$  such that for any $k$ we have     
    \begin{equation}\label{cor:glue-effsurj:eq5}
    B^{\bfY,k}_{-m_4}(\bfZ)\cup B^{\bfU,k}_{m_5}\supset B^{\bfY,k}_{m_4}.
    \end{equation}
    By the assumption there exists  $m_6>m_5$  such that for any $k$ we have 
    \begin{equation}\label{cor:glue-effsurj:eq6}
    \gamma(B^{\gamma^{-1}(\bfU),k}_{m_6})\supset B^{\bfU,k}_{m_5}.
\end{equation}

By  \Cref{lem:tri}\RamiD{(\ref{lem:tri:mon},\ref{lem:tri:sym})} for any 2 integers $a,b\in\N$ \RamiD{we have}:
\begin{equation}\label{cor:glue-effsurj:eq7}
\bigcup_{x\in B^{\bfY,k}_a\cap 
\RamiD{B^{\bfZ,k}_\infty}}B^{\bfY,k}_{-b}(x) = B^{\bfY,k}_a \cap B^{\bfY,k}_{-b}(\bfZ)
\end{equation}
\RamiD{and}
\begin{equation}\label{cor:glue-effsurj:eq8}
\bigcup_{x\in B^{\bfX,k}_a\cap \RamiD{B^{\gamma^{-1}(\bfZ),k}_\infty}}B^{\bfY,k}_{-b}(x) = B^{\bfX,k}_a \cap B^{\bfY,k}_{-b}(\gamma^{-1}(\bfZ)).
\end{equation}
    Take $m'=m_6$.
    For any $k\in \N$ we have 
    \begin{align*}        \gamma(B^{\bfX,k}_{m'})&=\gamma(B^{\bfX,k}_{m_6})
    \RamiD{\overset{\cref{lem:tri}\eqref{lem:tri:mon}}{\supset}}
    \gamma((B^{\bfX,k}_{-m_3}(\gamma^{-1}(\bfZ))\cap B^{\bfX,k}_{m_3}) \cup B^{\gamma^{-1}(\bfU),k}_{m_6})
        =\\&=
        \gamma(B^{\bfX,k}_{-m_3}(\gamma^{-1}(\bfZ))\cap B^{\bfX,k}_{m_3}) \cup \gamma(B^{\gamma^{-1}(\bfU),k}_{m_6})
        \supset\\&        
        \RamiD{\overset{(\ref{cor:glue-effsurj:eq8},\ref{cor:glue-effsurj:eq6})}{\supset}}
        \gamma\left(\bigcup_{x\in B^{\bfX,k}_{m_3}\cap \RamiD{B^{\gamma^{-1}(\bfZ),k}_\infty}}B^{\bfX,k}_{-m_3}(x)\right) \cup B^{\bfU,k}_{m_5}
        =\\&=
        \bigcup_{x\in B^{\bfX,k}_{m_3}\cap \RamiD{B^{\gamma^{-1}(\bfZ),k}_\infty}}\gamma(B^{\bfX,k}_{-m_3}(x)) \cup B^{\bfU,k}_{m_5}
        \supset\\&
        \RamiD{\overset{\eqref{cor:glue-effsurj:eq3}}{\supset}}        
        \bigcup_{x\in B^{\gamma^{-1}(\bfZ),k}_{m_2}}\gamma(B^{\bfX,k}_{-m_3}(x)) \cup B^{\bfU,k}_{m_5}
        \supset\\&
        \RamiD{\overset{\eqref{cor:glue-effsurj:eq4}}{\supset}}
        \bigcup_{x\in B^{\gamma^{-1}(\bfZ),k}_{m_2}}B^{\bfY,k}_{-m_4}(\gamma(x)) \cup B^{\bfU,k}_{m_5}
        \RamiD{=}\\&\RamiD{=}
        \bigcup_{x\in \gamma(B^{\gamma^{-1}(\bfZ),k}_{m_2})}B^{\bfY,k}_{-m_4}(x) \cup B^{\bfU,k}_{m_5}
        \supset\\&
        \RamiD{\overset{\eqref{cor:glue-effsurj:eq2}}{\supset}}
        \bigcup_{x\in B^{\bfZ,k}_{m_1}}B^{\bfY,k}_{-m_4}(x) \cup B^{\bfU,k}_{m_5}
        \supset\\&
        \RamiD{\overset{\eqref{cor:glue-effsurj:eq1}}{\supset}}
        \bigcup_{x\in B^{\bfY,k}_m\cap \RamiD{B^{\bfZ,k}_\infty}}B^{\bfY,k}_{-m_4}(x) \cup B^{\bfU,k}_{m_5}
        \supset\\&
        \RamiD{\overset{\eqref{cor:glue-effsurj:eq7}}{\supset}}
        (B^{\bfY,k}_m \cap B^{\bfY,k}_{-m_4}(\bfZ)) \cup B^{\bfU,k}_{m_5}
        \supset\\&\supset        
        B^{\bfY,k}_m \cap (B^{\bfY,k}_{-m_4}(\bfZ)) \cup B^{\bfU,k}_{m_5})
        \supset\\&
        \RamiD{\overset{\eqref{cor:glue-effsurj:eq5}}{\supset}}
        B^{\bfY,k}_m \cap B^{\bfY,k}_{m_4}=B^{\bfY,k}_m. 
    \end{align*} 
\end{proof}

\begin{proof}[Proof of \Cref{prop:crit-effsurj}]
By \Cref{lem:onto-start}, there is a stratification $\bfY=\bigcup_\alpha \bfY_{\alpha}$ such that $\gamma$ admits a section for each strata. By \Cref{lem:sect-effsurj} the maps $\gamma:\gamma^{-1}(\bfY_{\alpha}) \to \bfY_{\alpha}$ are effectively surjective. 
We will show that 
$\gamma:\bfX \to \bfY$ is effectively surjective by induction on the number of strata. The base follows from \Cref{lem:sect-effsurj}. For the induction step let $\bfY_0$ be a closed stratum. Let $\bfY'=\bfY\smallsetminus\bfY_0$.  By the induction hypothesis, the map $\gamma^{-1}(\bfY')\to \bfY'$ is effectively surjective. By \Cref{lem:sect-effsurj} the map $\gamma^{-1}(\bfY_0)\to \bfY_0$ is effectively surjective. So, by \Cref{cor:glue-effsurj}, $\gamma$ is effectively surjective. 
\end{proof}
\Cref{lem:bnd.on.sup} gives us the following:

\begin{cor}\label[cor]{cor:push.bnd.above}    
    Let $\gamma: \bfX\to \bfY$ be  a submersion of \RamiB{$\mu$-rectified} varieties. Assume that $\gamma$ is effectively surjective. Then for any $m$ there is $m'$ such that for any $k\in \N$  we have $$\mu_{m}^{\bfY,k} < \ell^{km'}\gamma_*(\mu_{m'}^{\bfX,k}).$$
\end{cor}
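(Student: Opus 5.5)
The plan is to combine \Cref{lem:bnd.on.sup} with effective surjectivity, the point being that effective surjectivity makes the indicator of the support of $\gamma_*(\mu_{m'}^{\bfX,k})$ equal to $1$ on all of $B_m^{\bfY,k}$. Fix $m$. By \Cref{def:effsur} there is $m_1$ such that for every $k$ we have $\gamma(B^{\bfX,k}_{m_1})\supset B^{\bfY,k}_m$. Next I apply \Cref{lem:bnd.on.sup} to the pair $(m,m_1)$. This produces $M$ such that for all $k$
$$\mu_{m}^{\bfY,k}\cdot 1_{\supp(\gamma_*(\mu_{m_1}^{\bfX,k}))} < \ell^{kM}\gamma_*(\mu_{m_1}^{\bfX,k}).$$
I will take $m'=\max(m_1,M)$. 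Since $\mu_{m_1}^{\bfX,k}\le \mu_{m'}^{\bfX,k}$ (the balls increase with the radius and the summands in \Cref{def: balls and measures} are nonnegative) and $\ell^{kM}\le \ell^{km'}$, it then suffices to show that $\mu_m^{\bfY,k}$ is dominated by the left-hand side above, i.e. that $B^{\bfY,k}_m\subset \supp(\gamma_*(\mu_{m_1}^{\bfX,k}))$.

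The key step is this support claim. Let $y\in B^{\bfY,k}_m$. Choose $x\in B^{\bfX,k}_{m_1}$ with $\gamma(x)=y$. Every neighborhood of $x$ inside the open set $B^{\bfX,k}_{m_1}$ has positive $\mu_{m_1}^{\bfX,k}$-mass: the forms $\omega_\alpha$ are invertible, so $|\omega_\alpha|$ is a smooth positive measure on the open pieces $i_\alpha^{-1}(t^{-m_1}\F_{\ell^k}[[t]]^M)$. Because $\gamma$ is a submersion, it is open on $F$-points; more concretely, \Cref{lem:eff.imp} gives that $\gamma$ of any small ball around $x$ contains a ball around $y$. Hence every neighborhood $W$ of $y$ satisfies $\gamma_*(\mu_{m_1}^{\bfX,k})(W)\ge \mu_{m_1}^{\bfX,k}(\gamma^{-1}(W)\cap B^{\bfX,k}_{-r}(x))>0$ for $r$ large, so $y$ lies in the support. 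In fact openness alone suffices here; no uniformity in $k$ is needed at this step, because the uniformity is already built into $m_1$ and $M$.

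The main obstacle I expect is merely bookkeeping about the meaning of $\supp$ (closed support versus the set where the density is nonzero). In Case 1 of the proof of \Cref{lem:bnd.on.sup}, the support is effectively the set where the counting/volume density $f$ or $g$ is nonzero. Under that reading I need that the density is positive at every $y\in\gamma(B^{\bfX,k}_{m_1})$. In the étale case, the density at $y$ counts preimages in $B^{\bfX,k}_{m_1}$, and this count is $\ge1$. In the projection case, the fiber volume is positive by the openness from \Cref{lem:eff.imp}. So in either interpretation the inclusion $B^{\bfY,k}_m\subset\gamma(B^{\bfX,k}_{m_1})\subset\supp$ holds, and the corollary follows.
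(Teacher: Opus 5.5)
Your proposal is correct and follows the paper's proof essentially verbatim. Effective surjectivity gives $m_1$, \Cref{lem:bnd.on.sup} applied to $(m,m_1)$ gives $M$, and enlarging the radius and exponent to $m'$ finishes the argument; you take $m'=\max(m_1,M)$ where the paper takes $m_1+M$, and either works. Your explicit check that $\gamma(B^{\bfX,k}_{m_1})\subset\supp(\gamma_*(\mu_{m_1}^{\bfX,k}))$, which the paper uses without comment, is sound, and continuity of $\gamma$ alone already suffices there, so the appeal to openness is unnecessary.
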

\begin{proof}
Fix $m$. 
    Since $\gamma$ is effectively surjective there is $m_1$ such that for every $k\in \bN$ we have $$\gamma(B^{\bfX,k}_{m_1})\supset B^{\bfY,k}_m.$$
    By \Cref{lem:bnd.on.sup}, there is $M$ such that for any $k$ we have  
    $$\mu_{m}^{\bfY,k}\cdot 1_{\supp(\gamma_*(\mu_{m_1}^{\bfX,k}))} < \ell^{kM}\gamma_*(\mu_{m_1}^{\bfX,k}).$$
    Take $m'=m_1+M$. For any $k\in \bN$ we have  
    $$\mu_{m}^{\bfY,k}=\mu_{m}^{\bfY,k}\cdot 1_{B^{\bfY,k}_m} \leq \mu_{m}^{\bfY,k}\cdot 1_{\gamma(B_{m_1}^{\bfX,k})}=\mu_{m}^{\bfY,k}\cdot 1_{\supp(\gamma_*(\mu_{m_1}^{\bfX,k}))} < \ell^{kM}\gamma_*(\mu_{m_1}^{\bfX,k})<\ell^{km'}\gamma_*(\mu_{m'}^{\bfX,k}).$$
\end{proof}

\begingroup
  \let\clearpage\relax
  \let\cleardoublepage\relax 
  \printindex
\endgroup

\bibliographystyle{alpha}
\bibliography{Ramibib}
\end{document}